\DeclareMathOperator{\divergence}{div}
\newcommand{\exponential}[1]{\ensuremath{{\mathrm e}^{#1}}}
\newcommand{\reference}{\mathrm{ref}}
\newcommand{\crit}{\mathrm{crit}}
\newcommand{\bydefinition}{\mathrm{def}}
\newcommand{\diff}{\mathrm{d}}
\newcommand{\Diff}[1][]{\mathrm{D}_{#1}} 
\renewcommand{\vec}[1]{\ensuremath{\mathbf{#1}}}
\renewcommand{\vec}[1]{\ensuremath{\bm{#1}}}%
\newcommand{\tensorq}[1]{\ensuremath{\mathbb{#1}}}      
\newcommand{\transpose}[1]{#1^\top}
\newcommand{\identity}{\ensuremath{\tensorq{I}}}
\newcommand{\cstress}{\tensorq{T}}
\newcommand{\vecv}{\ensuremath{\vec{v}}}
\newcommand{\gradv}{\ensuremath{\nabla \vecv}}
\newcommand{\gradsym}{\ensuremath{\tensorq{D}}}
\newcommand{\R}{\ensuremath{{\mathbb R}}}
\newcommand{\N}{\ensuremath{{\mathbb N}}}
\newcommand{\ienergy}{\ensuremath{e}} 
\newcommand{\fenergy}{\ensuremath{\psi}} 
\newcommand{\entropy}{\ensuremath{\eta}} 
\newcommand{\temp}{\ensuremath{\theta}} 
\newcommand{\nettenergy}{\ensuremath{E}_{\mathrm{tot}}} 
\newcommand{\netentropy}{\ensuremath{S}} 
\newcommand{\cheatvol}{\ensuremath{c_{\mathrm{V}}}}
\newcommand{\cheatvolref}{\ensuremath{c_{\mathrm{V}, \reference}}} 
\newcommand{\hfluxc}{\vec{j}_{q}}     
\newcommand{\entfluxc}{\vec{j}_{\entropy}} 
\newcommand{\entprodctemp}{\zeta} 
\newcommand{\pd}[2]{\ensuremath{\frac{\partial {#1}}{\partial {#2}}}}
\newcommand{\ppd}[2]{\ensuremath{\frac{\partial^2 {#1}}{\partial {#2^2}}}}
\newcommand{\dd}[2]{\ensuremath{\frac{\diff {#1}}{\diff {#2}}}}
\newcommand{\norm}[2][]{\ensuremath{\left\|#2\right\|_{#1}}}
\newcommand{\absnorm}[1]{\ensuremath{\left|#1\right|}}
\newcommand{\volume}[1][\Omega]{\ensuremath{#1}}}%
\renewcommand{\volume}[1][\Omega]{\ensuremath{#1}}}
\newcommand{\cvolumee}{\diff \mathrm{v}}
\newcommand{\csurfacees}{\diff \mathrm{s}}
\newcommand{\tensordot}[2]{\ensuremath{#1 \vdotdot #2}} 
\newcommand{\tensordot}[2]{\ensuremath{#1 : #2}} 
\newcommand{\vectordot}[2]{\ensuremath{#1 \bullet #2}}
\newcommand{\sleb}[2]{\ensuremath{L}^{#1} \left(#2 \right)}             
\newcommand{\ssob}[3]{\ensuremath{W}^{#1, #2} \left(#3 \right)}         
\newcommand{\ssobzero}[3]{\ensuremath{W}_{0}^{#1, #2} \left(#3 \right)} 
\newcommand{\entropyrellp}{{\mathcal S}}
\newcommand{\energyrellp}{{\mathcal E}}
\newcommand{\kapparef}{\kappa_{\reference}}
\newcommand{\tempref}{\temp_{\reference}}
\newcommand{\boundary}{\mathrm{bdr}}
\newcommand{\tempbdr}{\temp_{\boundary}}
\newcommand{\initial}{\mathrm{init}}
\newcommand{\Hdiff}{\relentropy}
\newcommand{\coeffC}{C_{\Hdiff}}
\newcommand{\vartemp}{\vartheta}
\newcommand{\vartempref}{\vartheta_{\reference}}
\newcommand{\relentropy}{\entropy_{\mathrm{diff}}}
\numberwithin{equation}{section}
\let\cite\citet
\begin{document}

\title{Unconditional finite amplitude stability of a fluid in a mechanically isolated vessel with spatially non-uniform wall temperature\thanks{V\'{\i}t Pr\r{u}\v{s}a thanks the Czech Science Foundation, grant number 20-11027X, for its support. Mark Dostal\'{\i}k has been supported by Charles University Research program No. UNCE/SCI/023 and GAUK 1652119.} 
}

\titlerunning{Finite amplitude stability of incompressible heat conducting viscous fluid}        

\author{M. Dostal\'{\i}k \and V. Pr\r{u}\v{s}a \and K.~R.~Rajagopal}


\institute{M. Dostal\'{\i}k and V. Pr\r{u}\v{s}a \at
  Charles University, Faculty of Mathematics and Physics, Sokolovsk\'a 83, Praha 8 -- Karl\'{\i}n\\
  CZ 186 75, Czech Republic\\
\email{dostalik@karlin.mff.cuni.cz}, \email{prusv@karlin.mff.cuni.cz}
\and
K. R. Rajagopal \at
Texas A\&M University, Department of Mechanical Engineering, 3123 TAMU, College Station\\
TX 77843-3123, United States of America\\
\email{krajagopal@tamu.edu}
}

\date{Received: date / Accepted: date}

\maketitle

\begin{abstract}
   A fluid occupying a mechanically isolated vessel with walls kept at spatially non-uniform temperature is in the long run expected to reach the spatially inhomogeneous steady state. Irrespective of the initial conditions the velocity field is expected to vanish, and the temperature field is expected to be fully determined by the steady heat equation. This simple observation is however difficult to prove using the corresponding governing equations. The main difficulties are the presence of the dissipative heating term in the evolution equation for temperature and the lack of control on the heat fluxes through the boundary. Using thermodynamically based arguments, it is shown that these difficulties in the proof can be overcome, and it is proved that the velocity and temperature perturbations to the steady state actually vanish as the time goes to infinity. 
 
\keywords{Navier--Stokes--Fourier fluid \and finite amplitude stability \and thermodynamically open system \and non-equilibrium steady state}
\subclass{
  35Q35 \and 
  35B35 \and 
  37L15
  }
\end{abstract}

\section{Introduction}
\label{sec:introduction}
The everyday experience is that if a fluid is put into a vessel, and if it is not allowed to substantially interact with the outside environment, then it eventually comes to the rest state. Moreover, the rest state is attained irrespective of the initial state of the fluid. The question is whether this behaviour can be deduced using the corresponding governing equations such as the Navier--Stokes--Fourier equations for an incompressible fluid.

In the case of a vessel that is \emph{completely isolated} from the outside environment, see Figure~\ref{fig:systems-isolated}, the answer to this question is straightforward and positive. One can exploit basic ideas from continuum thermodynamics, see~\cite{coleman.bd:on}, \cite{gurtin.me:thermodynamics*1,gurtin.me:thermodynamics} and early considerations by~\cite{duhem.p:traite}, and prove that both the velocity and temperature field eventually reach the \emph{spatially homogeneous} equilibrium rest state. The same holds also for \emph{simple open systems} such as a vessel immersed in a thermal bath, that is for the mechanically isolated vessel with walls kept at constant spatially uniform temperature, see~Figure~\ref{fig:systems-bath}. In a slightly more general case the answer to the question is much more complicated.

\begin{figure}[t]
  \begin{center}
  \subfloat[\label{fig:systems-isolated}Isolated vessel.]{\includegraphics[width=0.30\textwidth]{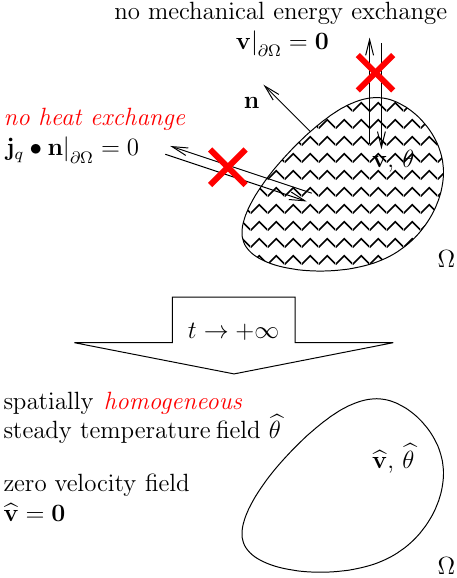}}
  \qquad
  \subfloat[\label{fig:systems-bath}Thermal bath.]{\includegraphics[width=0.30\textwidth]{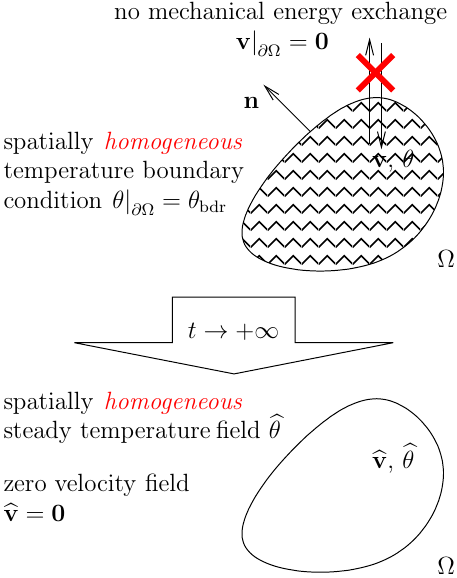}}
  \qquad
  \subfloat[\label{fig:systems-walls}Spatially non-uniform wall temperature.]{\includegraphics[width=0.3\textwidth]{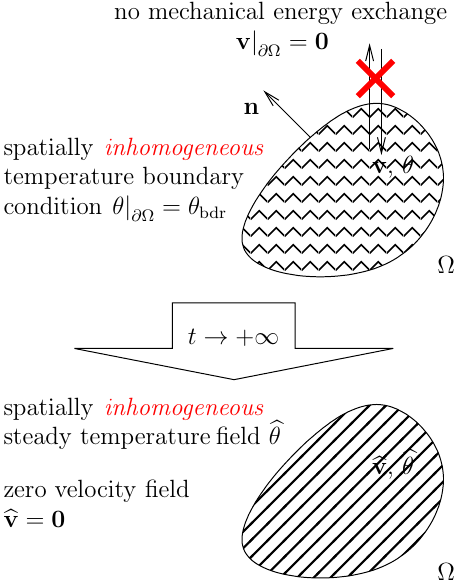}}
  \end{center}
  \label{fig:systems}
  \caption{Long time behaviour under various types of temperature boundary conditions.}
\end{figure}

If we consider a \emph{mechanically isolated} vessel with \emph{walls kept at a given spatially non-uniform temperature}, see Figure~\ref{fig:systems-walls}, that is if we change the temperature boundary condition, we are not anymore dealing with a thermodynamically isolated system or a system immersed in a thermal bath, and the concepts applicable in the stability analysis of these systems are of no use. Indeed, the \emph{change of temperature boundary condition}, which on the formal level means the change from the zero Neumann boundary condition/constant Dirichlet boundary condition to the spatially inhomogeneous Dirichlet boundary condition, qualitatively changes both the steady state and the mode of interaction with the outside environment. 

However, even in this setting we again expect the fluid to reach a steady state. The expected steady state is the state with the zero velocity field, while the steady temperature field is given by the steady heat equation and it is~\emph{spatially inhomogeneous}. This means that from the thermodynamical perspective we are dealing with a \emph{non-equilibrium} (entropy producing) steady state.

Despite the increased complexity of the steady state, it is still straightforward to prove that \emph{velocity field} decays to zero, see for example~\cite{serrin.j:on}. This reflects the fact that the kinetic energy is spontaneously converted to the thermal energy via the dissipative heating, hence if there is no mechanical energy supply from outside, then we should observe the decay of the kinetic energy.

Concerning the \emph{temperature} field, the situation is different. First, if the fluid is allowed to exchange heat with its surrounding, then it is \emph{a priori} not clear, whether one will during the evolution of the system see heat~\emph{influx} or \emph{efflux}. Second, the thermal energy is, in our simple case of incompressible Navier--Stokes--Fourier fluid, not converted to another type of energy. It is \emph{merely rearranged in space}. This is in striking difference with the behaviour of the kinetic energy that is being dissipated and that naturally ``disappears'' from the system.

Consequently, the key question \emph{whether the governing equations actually predict the expected long time behaviour}, becomes in the case of spatially inhomogeneous wall temperature rather difficult to answer. In particular, any characterisation of the long time behaviour of \emph{temperature perturbations} seems to be difficult to obtain in such a setting. In fact, to the best of our knowledge, no proof regarding the decay of temperature perturbations is available in the literature so far.

\subsection{Main result}
\label{sec:main-result}
We investigate the key question for the fluid described by the standard incompressible Navier--Stokes--Fourier model, and we provide a positive answer to the key question. (We consider the motion in the absence of external forces and we assume that the fluid density is constant. No Oberbeck--Boussinesq approximation is considered.) Although we consider stability of \emph{spatially inhomogeneous} non-equilibrium steady state in a thermodynamically \emph{open} system, we are able during the proof to exploit thermodynamical ideas, though these ideas go beyond the concepts introduced by~\cite{coleman.bd:on} in the stability analysis of \emph{spatially homogeneous} steady states in isolated systems or systems immersed in a thermal bath.

In particular, \emph{assuming that the solution to the governing equations is the classical one}, we prove that
\begin{equation}
  \label{eq:1}
  \int_{\Omega}
  \rho
  \cheatvolref
  \widehat{\temp}
  \left[
    \frac{1}{n}
    \left(
      1 + \frac{\widetilde{\temp}}{\widehat{\temp}}
    \right)^{n}
    -
    \frac{1}{m}
    \left(
      1 + \frac{\widetilde{\temp}}{\widehat{\temp}}
    \right)^{m}
    +
    \frac{n-m}{mn}
  \right]
  \,
  \cvolumee
  \xrightarrow{t \to + \infty}
  0
  ,
\end{equation}
where $m,n \in (0,1)$, $n>m > \frac{n}{2}$, see Theorem~\ref{thr:1} and Corollary~\ref{crl:1}. Here $\widehat{\temp}$ denotes the temperature field in the spatially inhomogeneous non-equilibrium steady state (solution to the steady heat equation), and $\widetilde{\temp}$ denotes the perturbation with respect to~$\widehat{\temp}$, and the symbols~$\rho$ and $\cheatvolref$ denote the density and the specific heat capacity at constant volume. The functional in~\eqref{eq:1} is non-negative and it vanishes if and only if the perturbation $\widetilde{\temp}$ vanishes everywhere in the domain~$\Omega$ of interest, which means that~\eqref{eq:1} is the desired stability result. \emph{We note that \eqref{eq:1} holds irrespective of the initial state of the system}, and with \emph{minimal assumptions concerning the behaviour of the dissipative heating term} in the evolution equation for the temperature.

\enlargethispage*{1em}

\subsection{Main issues}
\label{sec:main-issues}
Before we proceed with the proof of~\eqref{eq:1}, we briefly comment on the main issues in the analysis of long time behaviour of the temperature perturbations. (See Section~\ref{sec:preliminaries} for the notation and precise formulation of the long time behaviour problem.)

The first issue is the presence of the \emph{dissipative heating term} $2 \mu \tensordot{\gradsym}{\gradsym}$ in the evolution equation for the temperature. If we were following the so-called energy method, see~\cite{joseph.dd:stability*1,joseph.dd:stability} or \cite{straughan.b:energy}, we would have investigated the behaviour of the temperature perturbations $\widetilde{\temp}$ by the means of functional
$
  \int_{\Omega} \widetilde{\temp}^2 \, \cvolumee
$.
The evolution equation for this functional is easy to obtain from the evolution equation for the perturbation~$\widetilde{\temp}$. It suffices to take the product of the evolution equation for the perturbation with the perturbation itself and integrate \emph{by parts}, and one arrives at the equation
\begin{equation}
  \label{eq:3}
  \rho
  \cheatvolref
  \dd{}{t}
  \int_{\Omega} \widetilde{\temp}^2 \, \cvolumee
  =
  -
  \int_{\Omega}
  \kapparef \vectordot{\nabla \widetilde{\temp}}{\nabla \widetilde{\temp}}
  \, \cvolumee
  +
  \int_{\Omega}
  2 \mu \tensordot{\widetilde{\gradsym}}{\widetilde{\gradsym}}
  \;
  \widetilde{\temp}
  \, \cvolumee
  +
  \int_{\Omega}
  \rho
  \cheatvolref
  \left(
    \vectordot{\widetilde{\vec{v}}}{\nabla \widehat{\temp}}
  \right)
  \widetilde{\temp}
  \, \cvolumee
  .
\end{equation}

The first term on the right-hand side is a favourable one. It is negative and it pushes the temperature perturbation to zero. On the other hand the second term that comes from the dissipative heating does not have a sign, and it is hard to control/estimate unless one is willing to search for further information\footnote{Further information on the product $\tensordot{\widetilde{\gradsym}}{\widetilde{\gradsym}}$ can be formally obtained by the multiplication of the evolution equation~\eqref{eq:34} by the Laplacian of the velocity perturbation, see~\cite{kagei.y.ruzicka.m.ea:natural} for a similar manipulation, and also the discussion of slightly compressible convection in~\cite{straughan.b:energy} and \cite{richardson.ll:nonlinear}. This approach would however work only for constant viscosity, and it would allow one to prove stability only for a restricted set of initial perturbations (small perturbations). We however aim at unconditional result -- the size of initial perturbation must not be limited. This is what we expect intuitively from our physical system.} concerning the behaviour of $2 \mu \tensordot{\widetilde{\gradsym}}{\widetilde{\gradsym}}$. In particular, even if we know that the velocity perturbation $\widetilde{\vec{v}}$ decays, which is true, the fact that the velocity perturbation $\widetilde{\vec{v}}$ is small does not \emph{a~priori} imply that the velocity gradient $\widetilde{\gradsym}$ is also small. (A rapidly oscillating function can be small, while its gradient can be arbitrarily large.) Further, one can not expect to obtain an explicit pointwise \emph{upper bound} on the temperature perturbation $\widetilde{\temp}$. The source term $2 \mu \tensordot{\widetilde{\gradsym}}{\widetilde{\gradsym}}$ in the heat equation may concentrate at a spatial point, and push the local value of the temperature above any \emph{a~priori} given threshold.

In fact, the only piece of information one is allowed to use in the analysis of the evolution of temperature perturbations is
\begin{equation}
  \label{eq:4}
  \int_{\tau=0}^{+\infty}
  \left(
    \int_{\Omega}
    2 \mu \tensordot{\widetilde{\gradsym}}{\widetilde{\gradsym}}
    \, \cvolumee
  \right)
  \,
  \diff \tau
  <
  +\infty
  ,
\end{equation}
which means that the heat released by the dissipation of the kinetic energy is finite. However, it is not known \emph{when} and \emph{where} the heat is released. (We note that if the integral $\int_{\tau=0}^{\infty} g(\tau) \, \diff \tau $ of a positive function $g$ is finite, it is not necessarily true that $g \to 0+$ as $\tau \to +\infty$. See also the discussion following Lemma~\ref{lm:10} below.) In other words, the source term in the temperature evolution equation~\eqref{eq:35} could be triggered at \emph{a~priori} unknown time instants and spatial locations. 

The second issue is that the heat flux through the boundary is in the given setting beyond our control. We do not know \emph{a priori} whether at the given time instant and spatial location the heat flows \emph{into} the domain of interest or \emph{out of} the domain of interest.

For these reasons the dissipative heating is conveniently neglected in most mathematical studies on the stability of fluid motion, see~\cite{joseph.dd:stability*1,joseph.dd:stability} or \cite{straughan.b:energy}. 

\subsection{Outline of the solution to the stability problem}
\label{sec:outl-solut-stab}
If the only piece of information we are willing to use is~\eqref{eq:4}, we can hardly expect any result concerning the~\emph{decay rate} of the perturbations. Consequently, if we ignore the spatial dependence, we need to work with very weak qualitative results concerning the asymptotic behaviour of time dependent functions. In our analysis we use the following lemma.
\enlargethispage{1.5em}

\begin{lemma}[Decay of integrable functions]
  \label{lm:1}
  \begin{subequations}
    \label{eq:5}
  Let $y: [0, +\infty) \mapsto \R^+$ be a continuous non-negative function such that
  \begin{equation}
    \label{eq:6}
    \int_{\tau=0}^{+\infty} y(\tau) \, \diff \tau \leq C_1,
  \end{equation}
  where $C_1$ is a constant. Moreover, let for all $s, t \in [0, + \infty)$, $t>s$, 
  \begin{equation}
    \label{eq:7}
    y(t) - y(s) \leq \int_{\tau=s}^t f(y(\tau)) \, \diff \tau +  \int_{\tau=s}^t h(\tau) \, \diff \tau
  \end{equation}
  hold, where $f$ is a nondecreasing function from $\R^+$ to $\R^+$ and $h$ is a non-negative function such that
  $
  \int_{\tau=0}^{+\infty} h(\tau) \, \diff \tau \leq C_2
  $,
  where $C_2$ is a constant. Then
  \begin{equation}
    \label{eq:9}
    \lim_{t \to + \infty} y(t) = 0.
  \end{equation}
  \end{subequations}
\end{lemma}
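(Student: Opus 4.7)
The plan is to argue by contradiction, so suppose $y(t) \not\to 0$. Then $\limsup_{t \to \infty} y(t) > 0$, and there exist $\epsilon > 0$ together with a sequence $\tau_n \to +\infty$ such that $y(\tau_n) > 2\epsilon$. The integrability bound $\int_0^\infty y(\tau)\,\diff\tau \leq C_1$ forces $\liminf_{t \to \infty} y(t) = 0$; otherwise $y$ would eventually stay above a positive constant, contradicting integrability. Hence there also exist $s_n \to +\infty$ with $y(s_n) < \epsilon$, and after passing to a subsequence one may arrange $s_1 < \tau_1 < s_2 < \tau_2 < \cdots$.

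Next I would bracket each $\tau_n$ by level sets of $y$. Using continuity and the intermediate value theorem, define
\[
a_n := \sup\{t \in [s_n, \tau_n] : y(t) = \epsilon\}, \qquad \tau_n^* := \inf\{t \in [a_n, \tau_n] : y(t) = 2\epsilon\}.
\]
Both sets are nonempty thanks to $y(s_n) < \epsilon$ and $y(\tau_n) > 2\epsilon$. By construction, $s_n \leq a_n < \tau_n^* \leq \tau_n$, $y(a_n) = \epsilon$, $y(\tau_n^*) = 2\epsilon$, and crucially $\epsilon \leq y(\tau) \leq 2\epsilon$ for every $\tau \in [a_n, \tau_n^*]$. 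Monotonicity of $f$ then yields the uniform bound $f(y(\tau)) \leq f(2\epsilon)$ on this sub-interval, which is the key step that finesses the absence of any a priori pointwise upper bound on $y$ itself.

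Inserting $s = a_n$ and $t = \tau_n^*$ into the assumed growth inequality,
\[
\epsilon = y(\tau_n^*) - y(a_n) \leq f(2\epsilon)\,(\tau_n^* - a_n) + \int_{a_n}^{\tau_n^*} h(\tau)\,\diff\tau.
\]
Because $y \geq \epsilon$ on the (disjoint) intervals $[a_n, \tau_n^*]$, integrability of $y$ gives $\sum_n (\tau_n^* - a_n) \leq C_1/\epsilon$, whence $\tau_n^* - a_n \to 0$. Since $a_n \geq s_n \to +\infty$, the tail estimate on $h$ also yields $\int_{a_n}^{\tau_n^*} h \leq \int_{a_n}^{\infty} h \to 0$. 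Passing to the limit gives $\epsilon \leq 0$, the desired contradiction.

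The principal obstacle is precisely the control of the $f(y)$-term without any pointwise upper bound on $y$: on an arbitrary time interval $y$ could conceivably spike to values where $f(y)$ is enormous, wrecking any naive Gr\"onwall-type argument. Monotonicity of $f$ combined with the level-set construction above is what saves the day --- by confining attention to the sub-interval $[a_n, \tau_n^*]$, $y$ is automatically trapped in $[\epsilon, 2\epsilon]$, and the rest is elementary bookkeeping based on the integrability of $y$ and $h$.
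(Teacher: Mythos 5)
Your proof is correct. The paper does not actually prove Lemma~\ref{lm:1} --- it is quoted from the cited reference (Zheng) --- so there is no in-text argument to compare against; your contradiction argument via level sets is precisely the standard proof of this type of lemma. The key points all check out: integrability of $y$ forces $\liminf_{t\to\infty} y(t)=0$, so the interleaved sequences $s_n$, $\tau_n$ exist; the sup/inf construction of $a_n$ and $\tau_n^*$ together with the intermediate value theorem does trap $y$ in $[\epsilon,2\epsilon]$ on $[a_n,\tau_n^*]$ (the phrase ``by construction'' hides two small IVT arguments --- that $y<2\epsilon$ on $[a_n,\tau_n^*)$ by minimality of $\tau_n^*$, and that $y\geq\epsilon$ on $(a_n,\tau_n^*]$ by maximality of $a_n$ --- which you could spell out, but both are immediate); disjointness of the intervals $[a_n,\tau_n^*]$ plus $y\geq\epsilon$ there gives $\sum_n(\tau_n^*-a_n)\leq C_1/\epsilon$, hence $\tau_n^*-a_n\to 0$; and the tail of $h$ vanishes. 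The resulting inequality $\epsilon\leq f(2\epsilon)(\tau_n^*-a_n)+\int_{a_n}^{\tau_n^*}h$ then yields the contradiction. This is exactly the mechanism that makes the lemma usable in the paper despite the lack of any pointwise upper bound on the functional $\mathcal{Y}^{m,n}$.
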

The lemma is taken from~\cite[Lemma 1.3]{zheng.s:asymptotic}, and it originated in the work of~\cite[Theorem 9]{krejc.p.sprekels.j:weak}, see also~\cite{zheng.s:nonlinear} for further discussion. Various generalisations of the lemma can be found in~\cite{ramm.ag.hoang.ns:dynamical}, and we also point out that the question on decay of integrable functions is also a subject of {B}arb\u{a}lat lemma known in the optimal control theory, see~\cite{farkas.b.wegner.s:variations}. Lemma~\ref{lm:1} can be also rewritten in the differential form, but in this case one has to assume the existence of the derivative. This modification of Lemma~\ref{lm:1} reads as follows, see~\cite[Lemma 1.3]{zheng.s:asymptotic}.

\begin{lemma}[Decay of integrable functions -- differential version]
  \label{lm:10}
    Let $y: [0, +\infty) \mapsto \R^+$ be a continuous non-negative function such that
  \begin{equation}
    \label{eq:29}
    \int_{\tau=0}^{+\infty} y(\tau) \, \diff \tau \leq C_1,
  \end{equation}
  where $C_1$ is a constant, and let us assume that $\dd{y}{t}$ is a locally integrable function on $\R^+$. Suppose that $y$ satisfies for all $t \in (0, + \infty)$ the condition
  \begin{equation}
    \label{eq:21}
    \dd{y}{t} \leq f(y) + h(t),
  \end{equation}
  where functions $f$ and $h$ have the same properties as in Lemma~\ref{lm:1}. Then
  \begin{equation}
    \label{eq:25}
    \lim_{t \to + \infty} y(t) = 0.
  \end{equation}
\end{lemma}
In our case the existence of the time derivative of the corresponding quantity $y$ will be granted since we assume that we work with the classical solution of the governing equations. Consequently we can freely switch---whenever convenient---between the integral formulation used in Lemma~\ref{lm:1} and the differential formulation used in Lemma~\ref{lm:10}.

We recall the well-known fact that the existence of the integral~\eqref{eq:6} does not on its own imply~\eqref{eq:9}, the classical counterexample is the function
\begin{equation}
  \label{eq:26}
  g(t)
  =
  \begin{cases}
    1, & t \in [N-\frac{1}{2N^2}, N+\frac{1}{2N^2}],\  N \in \N, \\
    0, & \text{otherwise},
  \end{cases}
\end{equation}
see Figure~\ref{fig:fceint} for a sketch of such a function. (If necessary, using the convolution with the standard smoothing kernel, the function can be made infinitely smooth.) For this function we have
\begin{equation}
  \label{eq:28}
  \int_{t=0}^{+\infty} g(t) \, \diff t = \sum_{N=1}^{+\infty} \frac{1}{N^2} < + \infty,
\end{equation}
but $\lim_{N \to + \infty} g(N) = 1$. Consequently we see that
\begin{equation}
  \label{eq:27}
  \lim_{t \to + \infty} g(t) \not = 0, 
\end{equation}
and the limit in fact does not exist at all. The condition~\eqref{eq:7} or~\eqref{eq:21} in Lemma~\ref{lm:1} and Lemma~\ref{lm:10} respectively effectively prohibits the existence of the sufficiently fast thinning peaks as that shown in Figure~\ref{fig:fceint}. (The maximum slope of the graph that is the derivative is limited.) This provides an intuitive argument for the validity of the Lemma~\ref{lm:1} and Lemma~\ref{lm:10}. The rigorous proof of the lemmas is given for example in~\cite{zheng.s:nonlinear}. 
\begin{figure}[h]
  \begin{center}
  \includegraphics[width=0.5\textwidth]{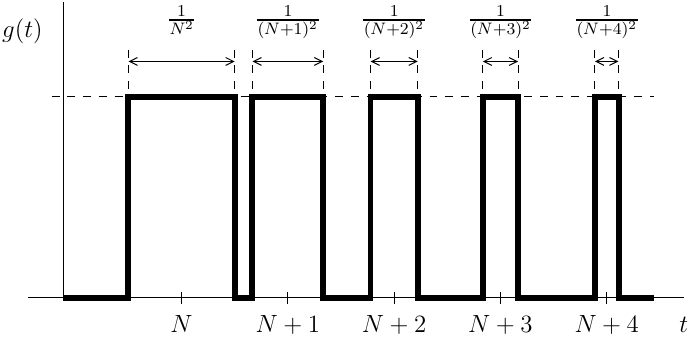}
  \end{center}
  \caption{Function $g(t)$ -- integrable function that does not vanish at infinity. (Vertical segments are not part of the graph, they have been added for better visualisation.)}
  \label{fig:fceint}
\end{figure}

Once we decide that Lemma~\ref{lm:1}/Lemma~\ref{lm:10} or its generalisation is the right tool for stability analysis, the only remaining task is to find the quantity $y(t)$ that satisfies the assumptions of the lemma, and that vanishes if and only if the temperature perturbation vanishes.

\emph{First}, we try to solve this problem using a Lyapunov type functional constructed via the method proposed by~\cite{bulcek.m.malek.j.ea:thermodynamics}. The functional constructed by the method reads
\begin{subequations}

\begin{equation}
  \label{eq:10}
  \mathcal{V}_{\mathrm{meq}}
  \left(
    \left.
      \widetilde{\vec{W}}
    \right\|
    \widehat{\vec{W}}
  \right)
  =_{\bydefinition}
  \int_{\Omega}
  \rho
  \cheatvolref
  \widehat{\temp}
  \left[
    \frac{\widetilde{\temp}}{\widehat{\temp}}
    -
    \ln \left( 1 + \frac{\widetilde{\temp}}{\widehat{\temp}} \right)
  \right]
  \,
  \cvolumee
  +
  \int_{\Omega}
  \frac{1}{2} \rho \absnorm{\widetilde{\vec{v}}}^2
  \,
  \cvolumee
  ,
\end{equation}
and its time derivative reads
\begin{multline}
  \label{eq:11}
  \dd{}{t}
    \mathcal{V}_{\mathrm{meq}}
    \left(
      \left.
        \widetilde{\vec{W}}
      \right\|
      \widehat{\vec{W}}
    \right)
    =
    -
    \int_{\Omega}
    \kapparef
    \widehat{\temp}
    \vectordot{\nabla \ln \left(1 + \frac{\widetilde{\temp}}{\widehat{\temp}} \right)}{\nabla \ln \left(1 + \frac{\widetilde{\temp}}{\widehat{\temp}} \right)}
    \,
    \cvolumee
    -
    \int_{\Omega}
    \frac{2 \mu \tensordot{\widetilde{\gradsym}}{\widetilde{\gradsym}}}{1 + \frac{\widetilde{\temp}}{\widehat{\temp}}}
    \,
    \cvolumee
    \\
    +
    \int_{\Omega}
    \rho
    \cheatvolref
    \left(
      \vectordot{\nabla \widehat{\temp}}{\widetilde{\vec{v}}}
    \right)
    \ln \left(1 + \frac{\widetilde{\temp}}{\widehat{\temp}} \right)
    \,
    \cvolumee
    ,
  \end{multline}
\end{subequations}
see Section~\ref{sec:lyap-like-funct}. We see that the first two terms on the right-hand side are negative, while last term can be positive or negative depending on the actual evolution of the perturbation. Consequently, it is not clear whether the time derivative of the proposed functional is negative. Despite these complications, the functional $\mathcal{V}_{\mathrm{meq}}$ is a promising one for the study of the stability problem, and it is almost the right one. We also note, that if we were dealing with a spatially homogeneous steady state $\widehat{\temp}$, which is the steady state in a thermodynamically isolated vessel or in a vessel immersed in a thermal bath, then $\nabla \widehat{\temp} = 0$, and the last term would vanish, and the stability problem would be solved immediately.

\emph{Second}, in Section~\ref{sec:reth-devel-lyap} we revisit the construction of the prospective Lyapunov type functional $\mathcal{V}_{\mathrm{meq}}$. We exploit the idea that the stability of the rest state should be an absolute fact independent of the choice of the description of the system. In particular, it should be independent of the choice of the temperature scale. In~\eqref{eq:10} we however use the \emph{absolute temperature} which is just a convenient measure of hotness. But it is not necessary to use the absolute temperature scale, other temperature scales can be used as well.

For the alternative temperature scale, we use the scale
$
  \frac{\vartemp}{\vartempref} =_{\bydefinition} \left( \frac{\temp}{\tempref} \right)^{1-m}
$,
where $\vartemp$ is the alternative temperature scale and $\vartempref$ and $\tempref$ are some fixed reference temperatures and $m \in (0,1)$. It is straightforward to check that the alternative temperature scale preserves the ordering according to the hotness, which means that the heat still flows in the direction of the temperature difference and so forth, but quantitative characterisation of the flow is different.

With respect to this alternative choice of temperature scale, the fluid of interest virtually behaves like a fluid with \emph{a temperature dependent thermal conductivity and temperature dependent specific heat capacity at constant volume}. For this virtual fluid we can again use the method proposed by~\cite{bulcek.m.malek.j.ea:thermodynamics} and construct a candidate for a Lyapunov type functional~$\mathcal{V}_{\mathrm{meq}}^{\vartemp,\, m}$. If we rewrite the functional $\mathcal{V}_{\mathrm{meq}}^{\vartemp,\, m}$ using the original temperature scale~$\temp$ it reads
  \begin{equation}
    \label{eq:14}
    \mathcal{V}_{\mathrm{meq}}^{\vartemp,\, m}
    \left(
      \left.
        \widetilde{\vec{W}}
      \right\|
      \widehat{\vec{W}}
    \right)
    =_{\bydefinition}
    \int_{\Omega}
    \rho
    \cheatvolref
    \widehat{\temp}
    \left[
      \frac{\widetilde{\temp}}{\widehat{\temp}}
      -
      \frac{1}{m}
      \left(
        \left(
          1 + \frac{\widetilde{\temp}}{\widehat{\temp}}
        \right)^{m}
        -
        1
      \right)
    \right]
    \,
    \cvolumee
    +
    \int_{\Omega}
    \frac{1}{2} \rho \absnorm{\widetilde{\vec{v}}}^2
    \,
    \cvolumee   
    ,
  \end{equation}
and its time derivative is again given by an explicit formula. Using this manipulation we obtain a whole class of functionals that are again almost appropriate for the stability analysis.

\emph{Third}, we conclude our analysis by exploiting Lemma~\ref{lm:1}. Using the family of functionals~\eqref{eq:14} we derive, see Section~\ref{sec:appl-lemma-decay}, the inequality
\begin{equation}
  \label{eq:16}
  \dd{}{t}
  \left(
    \mathcal{V}_{\mathrm{meq}}^{\vartemp,\, m}
    -
    \mathcal{V}_{\mathrm{meq}}^{\vartemp, n}
  \right)
  \leq
  -
  K^{m,n}
  \left(
    \mathcal{V}_{\mathrm{meq}}^{\vartemp,\, m}
    -
    \mathcal{V}_{\mathrm{meq}}^{\vartemp, n}
  \right)
  +
  \mathcal{H}^{m,n}
  ,
\end{equation}
where $K^{m,n}$ is a positive constant, $\mathcal{H}^{m,n}$ is a non-negative function integrable with respect to time, and $m, n \in (0,1)$, $n>m>\frac{n}{2}$. In~\eqref{eq:16} it is straightforward to show that the quantity
\begin{equation}
  \label{eq:17}
  \mathcal{Y}^{m,n}
  =_{\bydefinition}
  \mathcal{V}_{\mathrm{meq}}^{\vartemp,\, m}
  -
  \mathcal{V}_{\mathrm{meq}}^{\vartemp, n}
  =
  \int_{\Omega}
  \rho
  \cheatvolref
  \widehat{\temp}
  \left[
    \frac{1}{n}
    \left(
      1 + \frac{\widetilde{\temp}}{\widehat{\temp}}
    \right)^{n}
    -
    \frac{1}{m}
    \left(
      1 + \frac{\widetilde{\temp}}{\widehat{\temp}}
    \right)^{m}
    +
    \frac{n-m}{mn}
  \right]
  \,
  \cvolumee
  ,
\end{equation}
is a \emph{non-negative quantity that vanishes if and only if the temperature perturbation $\widetilde{\temp}$ vanishes}. Using~\eqref{eq:16} we can conclude that 
$
\int_{\tau=0}^{+\infty}
  \mathcal{Y}^{m,n}(\tau)
  \,
  \diff \tau
  \leq
  C_1^{m,n}
$,
where $C_1^{m,n}$ is a positive constant. Estimating the right-hand side of~\eqref{eq:16} by its absolute value we also see that $\mathcal{Y}^{m,n}$ satisfies the inequality
\begin{equation}
  \label{eq:19}
  \dd{\mathcal{Y}^{m,n}}{t}
  \leq
  K^{m,n} \mathcal{Y}^{m,n}
  +
  \mathcal{H}^{m,n}
  .
\end{equation}
Inequality~\eqref{eq:16} and~Lemma~\ref{lm:1} then allow us to conclude that $\mathcal{Y}^{m,n} \xrightarrow{t \to + \infty} 0$, which is in virtue of the definition of $\mathcal{Y}^{m,n}$ tantamount to~\eqref{eq:1}. This is the desired stability result, see Theorem~\ref{thr:1} and also Corollary~\ref{crl:1} which gives the characterisation of the stability in terms of a Lebesgue space norm.

\section{Preliminaries}
\label{sec:preliminaries}
Let us now introduce the notation and let us give a precise formulation of the problem. The notation regarding the governing equations and their thermodynamic basis is the standard one, we mainly follow~\cite{bulcek.m.malek.j.ea:thermodynamics} and~\cite{malek.j.prusa.v:derivation}.

\enlargethispage{1.5em}

\subsection{Governing equations and basic facts from thermodynamics}
\label{sec:governing-equations}
The fluid of interest is assumed to be the standard incompressible Navier--Stokes--Fourier fluid. In particular, we do not use any form of Oberbeck--Boussinesq type approximation, we just consider a fluid with a constant density. The governing equations in the absence of external forces read
\begin{subequations}
  \label{eq:governing-equations-full}
  \begin{align}
    \label{eq:22}
    \divergence \vec{v} &= 0, \\
    \label{eq:23}
    \rho \dd{\vec{v}}{t} &= - \nabla p + \divergence\left(2 \mu \gradsym\right), \\
    \label{eq:24}
    \rho \cheatvolref \dd{\temp}{t} &= \divergence \left( \kapparef \nabla \temp \right) + 2 \mu \tensordot{\gradsym}{\gradsym},
  \end{align}
\end{subequations}
where $\temp$ denotes the absolute temperature, $\vec{v}$ denotes the velocity and $p$ stands for the pressure. Symbols $\rho$, $\mu$, $\cheatvolref$ and $\kapparef$ denote the density, the viscosity, the specific heat at constant volume and the thermal conductivity, and all these material parameters are assumed to be positive constants. Furthermore, the symbol
$  \gradsym =_{\bydefinition}
  \frac{1}{2}
  \left(
    \nabla \vec{v}
    +
    \transpose{\nabla \vec{v}}
  \right)
$
denotes the symmetric part of the velocity gradient, and $\dd{}{t} = \pd{}{t} + \vectordot{\vec{v}}{\nabla}$ denotes the material time derivative.

Since we need some basic thermodynamics in the construction of Lyapunov type functionals via the method proposed by~\cite{bulcek.m.malek.j.ea:thermodynamics}, we recall some basic facts concerning the thermodynamic basis of the governing equations. The specific Helmholtz free energy for the incompressible Navier--Stokes--Fourier fluid is given by the formula
  \begin{equation}
    \label{eq:free-energy-incompressible-nse}
    \fenergy 
    =_{\bydefinition} 
    - 
    \cheatvolref \temp \left(\ln \left( \frac{\temp}{\tempref} \right) - 1 \right).
  \end{equation}
Using the standard thermodynamical identities, see for example~\cite{callen.hb:thermodynamics}, the specific Helmholtz free energy $\fenergy$ in the form~\eqref{eq:free-energy-incompressible-nse} leads to the following formulae for the specific entropy $\entropy$ and the specific internal energy $\ienergy$,
\begin{equation}
  \label{eq:43}
  \entropy = \cheatvolref \ln \left( \frac{\temp}{\tempref} \right), \qquad
  \ienergy = \cheatvolref \temp.
\end{equation}
The heat flux vector $\hfluxc$ is given by the Fourier law,
\begin{equation}
 \label{eq:46}
 \hfluxc =_{\bydefinition} - \kapparef \nabla \temp,  
\end{equation}
and the Cauchy stress tensor is given as
$
\cstress 
=_{\bydefinition} 
-
p \identity
+ 
2 \mu \gradsym
$.
The general evolution equation for the entropy reads
\begin{equation}
  \label{eq:48}
  \rho
  \dd{\entropy}{t}
  =
  \frac{\entprodctemp_{\mathrm{th}}}{\temp}
  +
  \frac{\entprodctemp_{\mathrm{mech}}}{\temp}
  -
  \divergence
  \entfluxc
  ,
\end{equation}
where $\frac{\entprodctemp_{\mathrm{th}}}{\temp}$ and $\frac{\entprodctemp_{\mathrm{mech}}}{\temp}$ denote the entropy production terms related to the thermal and mechanical quantities, and $\entfluxc$ denotes the entropy flux. In our setting we have 
\begin{equation}
  \label{eq:49}
  \entprodctemp_{\mathrm{mech}}
  =_{\bydefinition}
  2 \mu \tensordot{\gradsym}{\gradsym}
  ,
  \qquad
  \entprodctemp_{\mathrm{th}}
  =_{\bydefinition}
  \kapparef
  \frac{\absnorm{\nabla \temp}^2}{\temp} 
  ,
  \qquad
  \entfluxc
  =_{\bydefinition}
  \frac{\hfluxc}{\temp}
  .
\end{equation}

\subsection{Boundary conditions}
\label{sec:boundary-conditions}

The boundary conditions are the standard boundary conditions for thermal convection. We consider a fluid in a rigid vessel $\Omega$ such that
\begin{equation}
  \label{eq:boundary-conditions}
  \left. \vec{v} \right|_{\partial \Omega} = \vec{0},
  \qquad
  \left. \temp \right|_{\partial \Omega} = \tempbdr,
\end{equation}
where $\tempbdr$ is a given function of position. In particular, \emph{we are interested in the case when $\tempbdr$ is a nontrivial function of position}. Since the temperature evolution equation~\eqref{eq:24} is a parabolic equation with a convective term and a positive source, we know that the temperature field~$\temp = \widehat{\temp} + \widetilde{\temp}$ is under given boundary conditions~\eqref{eq:boundary-conditions} \emph{bounded from below} uniformly in space and time, see for example~\cite{friedman.a:partial}, \cite{ladyzhenskaya.oa.solonnikov.va.ea:linear} or~\cite{lieberman.gm:second}. In particular, if the boundary temperature is positive $\tempbdr$, then $\temp$ is also always positive.

The no-slip boundary condition $\left. \vec{v} \right|_{\partial \Omega} = \vec{0}$ guarantees that the vessel is~\emph{mechanically isolated}. This means that the \emph{mechanical energy exchange} between the vessel and the outside environment, which is given by the surface integral $\int_{\partial \Omega} \vectordot{\cstress \vec{v}}{\vec{n}} \csurfacees$, is equal to zero,
\begin{equation}
  \label{eq:18}
  \int_{\partial \Omega} \vectordot{\cstress \vec{v}}{\vec{n}} \csurfacees = 0.
\end{equation}
On the other hand the \emph{heat exchange} between the vessel and the outside environment is given by the surface integral
$
-
\int_{\partial \Omega}
\vectordot{\hfluxc}{\vec{n}} \csurfacees
=
\int_{\partial \Omega}
\vectordot{\kappa_{\reference} \nabla \temp}{\vec{n}} \csurfacees
$.
The temperature boundary condition $\left. \temp \right|_{\partial \Omega} = \tempbdr$ does not in general imply that the \emph{temperature gradient} vanishes on the boundary as well, hence we in general expect that
\begin{equation}
  \label{eq:20}
  \int_{\partial \Omega}
  \vectordot{\hfluxc}{\vec{n}} \csurfacees
  \not
  =
  0
  .
\end{equation}
In this sense we characterise our system of interest as a mechanically isolated and thermally open system, and our choice of boundary conditions indeed corresponds to the system depicted in Figure~\ref{fig:systems-walls}.

\subsection{Spatially inhomogeneous non-equilibrium steady state}
\label{sec:non-equil-steady}

The non-equilibrium steady state $\widehat{\vec{W}} =_{\bydefinition} [\widehat{\vec{v}}, \widehat{\temp}, \widehat{p}]$ solves the system
\begin{equation}
  \label{eq:governing-equations-steady-state}
    0  = -\nabla{\widehat{p}}, \qquad
    0  = \divergence \left( \kapparef \nabla \widehat{\temp} \right),
\end{equation}
with boundary conditions
\begin{equation}
  \label{eq:boundary-conditions-steady-state}
  \left. \widehat{\vec{v}} \right|_{\partial \Omega} = \vec{0},
  \qquad
  \left. \widehat{\temp} \right|_{\partial \Omega} = \tempbdr,
\end{equation}
which means that the fluid is at rest, $\widehat{\vec{v}} = \vec{0}$, and that the temperature field is given by the steady heat equation~\eqref{eq:governing-equations-steady-state} with Dirichlet boundary condition~\eqref{eq:boundary-conditions-steady-state}. If $\tempbdr$ is a nontrivial function of position, then $\widehat{\temp}$ is a spatially inhomogeneous bounded function.

\subsection{Evolution equations for a perturbation to the steady state and the stability problem}
\label{sec:evol-equat-pert}
The velocity/temperature/pressure fields $\vec{W} = [\vec{v}, \temp, p]$ are split into the steady part $\widehat{\vec{W}} = [\widehat{\vec{v}}, \widehat{\temp}, \widehat{p}]$ and the perturbation $\widetilde{\vec{W}} =_{\bydefinition} [\widetilde{\vec{v}}, \widetilde{\temp}, \widetilde{p}]$, that is
$
[\vec{v}, \temp, \widehat{p}]
  =
  [\widehat{\vec{v}}, \widehat{\temp}, \widehat{p}]
  +
  [\widetilde{\vec{v}}, \widetilde{\temp}, \widetilde{p}]
$.
The governing equations for the triple $\widetilde{\vec{W}} = [\widetilde{\vec{v}}, \widetilde{\temp}, \widetilde{p}]$ are
\begin{subequations}
  \label{eq:governing-equations-perturbation}
  \begin{align}
    \label{eq:33}
    \divergence \widetilde{\vec{v}} &= 0,
    \\
    \label{eq:34}
    \rho
    \left(
    \pd{
    \widetilde{\vec{v}}
    }
    {
    t
    }
    +
    \left(
    \vectordot{\widetilde{\vec{v}}}{\nabla}
    \right)
    \vec{\widetilde{\vec{v}}}
    \right)
    &=
      -
      \nabla \widetilde{p}
      +
      \divergence
      \left(
      2 \mu \widetilde{\gradsym}
      \right)
      ,
    \\
    \label{eq:35}
    \rho
    \cheatvolref
    \left(
    \pd{\widetilde{\temp}}{t}
    +
    \left(
    \vectordot{\widetilde{\vec{v}}}{\nabla}
    \right)
    \widetilde{\temp}
    \right)
    +
    \rho
    \cheatvolref
    \left(
    \vectordot{\widetilde{\vec{v}}}{\nabla}
    \right)
    \widehat{\temp}
                                    &=
                                      \divergence \left( \kapparef \nabla \widetilde{\temp} \right)
                                      +
                                      2 \mu \tensordot{\widetilde{\gradsym}}{\widetilde{\gradsym}}.
  \end{align}
\end{subequations}
These equations are straightforward to obtain from the fact that $\vec{W}$ as well as $\widehat{\vec{W}}$ solve the governing equations~\eqref{eq:governing-equations-full}. The initial conditions for the full evolution equations read 
$
\left. \vec{v} \right|_{t=0} = \vec{v}_{\initial}
$,
$
  \left. \temp \right|_{t=0} = \temp_{\initial}
$,
which implies that the initial conditions for the perturbation are
\begin{equation}
  \label{eq:initial-conditions-perturbation}
  \left. \widetilde{\vec{v}} \right|_{t=0} = \vec{v}_{\initial}, 
  \qquad
    \left. \widetilde{\temp} \right|_{t=0} = \temp_{\initial} - \widehat{\temp}.
  \end{equation}
The boundary conditions for the perturbation are derived from \eqref{eq:boundary-conditions}, which upon using the fact that the steady state fulfills the boundary condition~\eqref{eq:boundary-conditions-steady-state}, leads to
\begin{equation}
  \label{eq:eq:boundary-conditions-perturbation}
  \left. \widetilde{\vec{v}} \right|_{\partial \Omega} = \vec{0},
  \qquad
  \left. \widetilde{\temp} \right|_{\partial \Omega} = 0.
\end{equation}
  
The task is to \emph{show that the perturbation vanishes as the time goes to infinity}, that is 
\begin{equation}
  \label{eq:42}
\widetilde{\vec{W}} \xrightarrow{t \to + \infty} \vec{0}.
\end{equation}
The exact meaning of the convergence can be specified at will in the sense that it can be specifically tailored for the problem. In our setting and for the temperature perturbation the meaning of~\eqref{eq:42} is~\eqref{eq:1}.

\section{A candidate for a Lyapunov type functional}
\label{sec:lyap-like-funct}
\cite{bulcek.m.malek.j.ea:thermodynamics} have proposed a thermodynamically based method for a systematic construction of a candidate for a Lyapunov type functional. We exploit this method, and we show that although the candidate for a Lyapunov type functional is not a genuine Lyapunov functional---its time derivative can not be shown to be negative---it is still useful in the stability analysis. The key findings of this section are the following.

\begin{lemma}[Lyapunov type functional and its time derivative]
  \label{lm:2}
    Let us consider the velocity/temperature perturbation $\widetilde{\temp}$, $\widetilde{\vec{v}}$ governed by equations~\eqref{eq:governing-equations-perturbation}. Let us define the functional
  \begin{equation}
    \label{eq:54}
    \mathcal{V}_{\mathrm{meq}}
    \left(
      \left.
        \widetilde{\vec{W}}
      \right\|
      \widehat{\vec{W}}
    \right)
    =_{\bydefinition}
    \int_{\Omega}
    \left[
      \rho
      \cheatvolref
      \widehat{\temp}
      \left[
        \frac{\widetilde{\temp}}{\widehat{\temp}}
        -
        \ln \left( 1 + \frac{\widetilde{\temp}}{\widehat{\temp}} \right)
      \right]
      +
      \frac{1}{2} \rho \absnorm{\widetilde{\vec{v}}}^2
    \right]
    \,
    \cvolumee
    ,
  \end{equation}
  where we use the notation introduced in Section~\ref{sec:preliminaries}. The functional remains non-negative for all possible temperature perturbations $\widetilde{\temp} \in (-\widehat{\temp}, + \infty)$ and velocity perturbations $\widetilde{\vec{v}}$, and it vanishes if and only if the perturbations $\widetilde{\temp}$ and $\widetilde{\vec{v}}$ vanish everywhere in the domain of interest. Furthermore, the time derivative of the functional is given by the formula
  \begin{multline}
    \label{eq:55}
    \dd{}{t}
    \mathcal{V}_{\mathrm{meq}}
    \left(
      \left.
        \widetilde{\vec{W}}
      \right\|
      \widehat{\vec{W}}
    \right)
    =
    -
    \int_{\Omega}
    \kapparef
    \widehat{\temp}
    \vectordot{\nabla \ln \left(1 + \frac{\widetilde{\temp}}{\widehat{\temp}} \right)}{\nabla \ln \left(1 + \frac{\widetilde{\temp}}{\widehat{\temp}} \right)}
    \,
    \cvolumee
    -
    \int_{\Omega}
    \frac{2 \mu \tensordot{\widetilde{\gradsym}}{\widetilde{\gradsym}}}{1 + \frac{\widetilde{\temp}}{\widehat{\temp}}}
    \,
    \cvolumee
    \\
    +
    \int_{\Omega}
    \rho
    \widehat{\temp}    
    \vectordot{
      \widetilde{\vec{v}}
    }
    {
      \nabla
      \left[
        \cheatvolref \ln \left(1 + \frac{\widetilde{\temp}}{\widehat{\temp}} \right)
      \right]
    }
    \,
    \cvolumee
    .
  \end{multline}
\end{lemma}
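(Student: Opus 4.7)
The plan is to establish the two parts of the lemma separately. For the non-negativity claim, I would observe that the integrand in~\eqref{eq:54} is the sum of two non-negative pieces: the kinetic contribution $\tfrac{1}{2}\rho |\widetilde{\vec{v}}|^2$, which is obviously non-negative, and the thermal contribution $\rho \cheatvolref \widehat{\temp}\,[x - \ln(1+x)]$ with $x = \widetilde{\temp}/\widehat{\temp} \in (-1, \infty)$. Convexity of $f(x)=x - \ln(1+x)$ combined with $f(0)=f'(0)=0$ gives pointwise non-negativity with equality iff $x=0$; since $\rho$, $\cheatvolref$, and $\widehat{\temp}$ are strictly positive (positivity of $\widehat{\temp}$ is guaranteed by Section~\ref{sec:boundary-conditions}), the functional vanishes precisely when both $\widetilde{\vec{v}}\equiv \vec{0}$ and $\widetilde{\temp}\equiv 0$ in $\Omega$.

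For the time-derivative formula, I would split the differentiation into kinetic and thermal parts. Testing the momentum equation~\eqref{eq:34} by $\widetilde{\vec{v}}$ and integrating, the convective and pressure terms vanish thanks to $\divergence \widetilde{\vec{v}}=0$ together with $\widetilde{\vec{v}}|_{\partial \Omega}=\vec{0}$, while integration by parts of the viscous term yields $\tfrac{\diff}{\diff t}\!\int_{\Omega}\! \tfrac{1}{2}\rho |\widetilde{\vec{v}}|^2 \,\cvolumee = -\!\int_{\Omega}\! 2\mu \tensordot{\widetilde{\gradsym}}{\widetilde{\gradsym}} \,\cvolumee$. For the thermal part I would exploit the pointwise identity $\partial_t[\widetilde{\temp}-\widehat{\temp}\ln\zeta] = \partial_t \widetilde{\temp}\,(1 - 1/\zeta)$, where I abbreviate $\zeta \mathrel{:=} 1 + \widetilde{\temp}/\widehat{\temp}$. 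Since $\widehat{\temp}$ is time-independent, this reduces the computation to integrating equation~\eqref{eq:35} tested against the multiplier $(1-1/\zeta)$.

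Processing the right-hand side of~\eqref{eq:35} term by term is the technical core of the proof. The dissipative contribution $\int_{\Omega} 2\mu \tensordot{\widetilde{\gradsym}}{\widetilde{\gradsym}}(1 - 1/\zeta) \,\cvolumee$ combines with the $-\int_{\Omega} 2\mu \tensordot{\widetilde{\gradsym}}{\widetilde{\gradsym}} \,\cvolumee$ from the kinetic balance to produce the desired $-\int_{\Omega} 2\mu \tensordot{\widetilde{\gradsym}}{\widetilde{\gradsym}}/\zeta \,\cvolumee$. For the diffusive term, integration by parts produces no boundary contribution, because $\widetilde{\temp}|_{\partial\Omega} = 0$ forces $\zeta|_{\partial\Omega} = 1$ and hence $(1-1/\zeta)|_{\partial\Omega}=0$; expanding $\nabla \widetilde{\temp} = \widehat{\temp} \nabla \zeta + (\zeta-1)\nabla \widehat{\temp}$ then identifies the principal piece $-\int_{\Omega} \kapparef \widehat{\temp} |\nabla \ln \zeta|^2 \,\cvolumee$. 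The two convective terms are treated jointly: writing them as $-\int_{\Omega} \rho \cheatvolref (\widetilde{\vec{v}}\cdot\nabla\temp)(1-\widehat{\temp}/\temp)\,\cvolumee$ and using $\divergence \widetilde{\vec{v}}=0$ with $\widetilde{\vec{v}}|_{\partial\Omega}=\vec{0}$ to integrate by parts, all terms of the form $\int_{\Omega} \widetilde{\vec{v}}\cdot \nabla(\text{scalar})\,\cvolumee$ with the scalar being $\widehat{\temp}$ or $\widetilde{\temp}$ drop out, leaving precisely the coupling term $\int_{\Omega} \rho \widehat{\temp}\,\widetilde{\vec{v}} \cdot \nabla [\cheatvolref \ln \zeta] \,\cvolumee$.

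The hardest bookkeeping step will be to show that the \emph{cross} term $-\int_{\Omega} \kapparef (\zeta-1)/\zeta^2 \,\nabla \widehat{\temp} \cdot \nabla \zeta \,\cvolumee$ left over from the diffusive integration by parts actually vanishes. My plan is to introduce the one-dimensional primitive $\phi(\zeta) \mathrel{:=} \ln\zeta + 1/\zeta - 1$, which satisfies $\nabla \phi(\zeta) = (\zeta-1)\nabla \zeta/\zeta^2$ and $\phi(1)=0$, so that the leftover integral equals $-\int_{\Omega} \kapparef \nabla \widehat{\temp}\cdot \nabla \phi(\zeta)\,\cvolumee$. A second integration by parts then eliminates the bulk contribution via the steady heat equation $\divergence(\kapparef \nabla \widehat{\temp})=0$ in~\eqref{eq:governing-equations-steady-state} and the boundary contribution via $\phi(\zeta)|_{\partial\Omega} = \phi(1)=0$. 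Summing all surviving pieces reproduces~\eqref{eq:55}.
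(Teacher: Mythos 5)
Your proposal is correct, and every step checks out: the multiplier $1-1/\zeta$ is exactly $\partial_t$ of the thermal integrand divided by $\rho\cheatvolref\partial_t\widetilde{\temp}$, the dissipative and convective bookkeeping is right, and the elimination of the cross term $-\int_{\Omega}\kapparef\frac{\zeta-1}{\zeta^2}\,\vectordot{\nabla\widehat{\temp}}{\nabla\zeta}\,\cvolumee$ via the primitive $\phi(\zeta)=\ln\zeta+1/\zeta-1$ and the steady heat equation is precisely the nontrivial cancellation that makes the lemma work. Your route differs from the paper's main-text proof, which never touches the temperature equation directly: there the time derivative is assembled from the entropy evolution equation~\eqref{eq:48} written for the perturbed and steady states, the relative entropy $\relentropy=\cheatvolref\ln(1+\widetilde{\temp}/\widehat{\temp})$, and the total energy balance~\eqref{eq:96}, which buys a derivation that manifestly contains no boundary fluxes and that generalises mechanically to the whole family $\mathcal{V}_{\mathrm{meq}}^{\vartheta,m}$ of Section~\ref{sec:reth-devel-lyap}. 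Your computation is instead essentially the paper's Appendix~\ref{sec:altern-deriv-form} derivation in disguise: your multiplier $1-\widehat{\temp}/\temp$ is $f'(\zeta)$ for $f(y)=y-\ln y-1$, and your second integration by parts against $\divergence(\kapparef\nabla\widehat{\temp})=0$ plays the role of identity~\eqref{eq:317} in the proof of Lemma~\ref{lm:8}. What your version buys is elementarity and self-containedness (no thermodynamic scaffolding needed); what it loses is the systematic mechanism for producing the analogous formula~\eqref{eq:137} for general exponents $m$, which the paper obtains by simply changing $f$. One cosmetic remark: your characterisation of the zero set (both $\widetilde{\vec{v}}\equiv\vec{0}$ and $\widetilde{\temp}\equiv 0$) is actually more precise than the wording of the lemma itself.
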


\begin{lemma}[On boundedness of some integrals]
  \label{lm:3}
  Let us consider the velocity/temperature perturbation $\widetilde{\temp}$, $\widetilde{\vec{v}}$ governed by equations~\eqref{eq:governing-equations-perturbation}. Then the dissipative heating term possesses a finite time-space integral,
  \begin{subequations}
    \label{eq:56}
    \begin{equation}
      \label{eq:57}
      \int_{\tau=0}^{+\infty}
      \left(
        \int_{\Omega}
        2 \mu \tensordot{\widetilde{\gradsym}}{\widetilde{\gradsym}}
        \,
        \cvolumee
      \right)
      \,
      \diff \tau
      <
      + \infty
      .
    \end{equation}
    Furthermore, the following integrals are also finite
    \begin{align}
      \label{eq:58}
    \int_{\tau=0}^{+\infty}
    \left(
    \int_{\Omega}
    \frac{2 \mu \tensordot{\widetilde{\gradsym}}{\widetilde{\gradsym}}}{1 + \frac{\widetilde{\temp}}{\widehat{\temp}}}
    \,
    \cvolumee
    \right)
    \,
    \diff \tau
    &<
    +
      \infty
      ,
      \\
      \label{eq:59}
      \int_{\tau=0}^{+\infty}
      \left(
  \int_{\Omega}
  \kapparef
  \widehat{\temp}
  \vectordot{\nabla \ln \left( 1 + \frac{\widetilde{\temp}}{\widehat{\temp}}\right)}{\nabla \ln \left( 1 + \frac{\widetilde{\temp}}{\widehat{\temp}}\right)}
  \,
  \cvolumee
  \right)
  \,
  \diff \tau
  &<
    + \infty
    ,
      \\
      \label{eq:60}
        \int_{\tau=0}^{+\infty}
      \absnorm{
      \left(
  \int_{\Omega}
  \rho
      \cheatvolref
      \left(
    \vectordot
    {
      \nabla \widehat{\temp}
    }
    {
      \widetilde{\vec{v}}
    }
      \right)
      \ln \left( 1 + \frac{\widetilde{\temp}}{\widehat{\temp}}\right)
  \,
  \cvolumee
  \right)
      }
      \,
      \diff \tau
  &<
      + \infty
      .
    \end{align}
  \end{subequations}
\end{lemma}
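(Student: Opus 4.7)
The plan splits into three rounds: handle (\ref{eq:57}) from the kinetic energy identity, derive (\ref{eq:58}) from a pointwise lower bound on $1+\widetilde{\temp}/\widehat{\temp}$, and obtain (\ref{eq:59})--(\ref{eq:60}) by time-integrating the identity (\ref{eq:55}) and taming the sign-indefinite cross term with Young's inequality together with Korn's inequality and the kinetic-energy bound just obtained.

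\emph{Kinetic energy identity.} First I take the scalar product of (\ref{eq:34}) with $\widetilde{\vec{v}}$ and integrate over $\Omega$. Using $\divergence\widetilde{\vec{v}}=0$ and $\widetilde{\vec{v}}|_{\partial\Omega}=\vec{0}$, the convective and pressure terms vanish and the viscous term contributes $-\int_\Omega 2\mu \tensordot{\widetilde{\gradsym}}{\widetilde{\gradsym}}\,\cvolumee$, so
\begin{equation*}
  \dd{}{t}\int_\Omega \tfrac{1}{2}\rho\absnorm{\widetilde{\vec{v}}}^2\,\cvolumee = -\int_\Omega 2\mu \tensordot{\widetilde{\gradsym}}{\widetilde{\gradsym}}\,\cvolumee ,
\end{equation*}
and integration from $0$ to $+\infty$ yields (\ref{eq:57}) with explicit bound $\int_\Omega \tfrac{1}{2}\rho\absnorm{\vec{v}_{\initial}}^2\,\cvolumee$. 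Combining Poincar\'e and Korn's inequality, this simultaneously gives $\int_0^{+\infty} \!\int_\Omega \absnorm{\widetilde{\vec{v}}}^2\,\cvolumee\,\diff\tau<+\infty$, a fact I will use repeatedly. For (\ref{eq:58}), the elliptic maximum principle applied to the steady heat equation in (\ref{eq:governing-equations-steady-state}) bounds $\widehat{\temp}$ from above, while the parabolic minimum principle already invoked in Section~\ref{sec:boundary-conditions} provides a uniform positive lower bound on $\temp=\widehat{\temp}+\widetilde{\temp}$. Hence $1+\widetilde{\temp}/\widehat{\temp}=\temp/\widehat{\temp}\geq c_0>0$ pointwise, and the integrand in (\ref{eq:58}) is controlled by $c_0^{-1}\cdot 2\mu \tensordot{\widetilde{\gradsym}}{\widetilde{\gradsym}}$, so (\ref{eq:58}) is a direct consequence of (\ref{eq:57}).

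\emph{Gradient of the logarithm.} To obtain (\ref{eq:59}) I integrate (\ref{eq:55}) in time from $0$ to $T$. Since $\mathcal{V}_{\mathrm{meq}}\geq 0$ by Lemma~\ref{lm:2},
\begin{equation*}
  \int_0^T \!\int_\Omega \kapparef\widehat{\temp}\,\absnorm{\nabla \ln(1+\tfrac{\widetilde{\temp}}{\widehat{\temp}})}^2 \cvolumee\,\diff\tau \;\leq\; \mathcal{V}_{\mathrm{meq}}(0) + \int_0^T\! \absnorm{\int_\Omega \rho\widehat{\temp}\,\widetilde{\vec{v}}\cdot \nabla\bigl[\cheatvolref\ln(1+\tfrac{\widetilde{\temp}}{\widehat{\temp}})\bigr]\cvolumee}\,\diff\tau .
\end{equation*}
The main obstacle is the sign-indefinite last integral. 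A pointwise Young inequality splits its integrand into $\tfrac{\varepsilon}{2}\kapparef\widehat{\temp}\absnorm{\nabla \ln(\cdots)}^2+\tfrac{C_\varepsilon}{2}\absnorm{\widetilde{\vec{v}}}^2$, where boundedness of $\widehat{\temp}$ is used to pull the factor $\widehat{\temp}$ out of the second term. Since the kinetic-energy bound yields $\widetilde{\vec{v}}\in L^2(0,+\infty;L^2(\Omega))$, that piece is integrable in time, and choosing $\varepsilon$ small enough to absorb the first piece into the left-hand side delivers (\ref{eq:59}).

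\emph{Cross term in its original form.} Finally, for (\ref{eq:60}) I apply Cauchy--Schwarz in space to bound the spatial integral by $\rho\cheatvolref\,\|\nabla\widehat{\temp}\|_{L^\infty}\,\|\widetilde{\vec{v}}\|_{L^2}\,\|\ln(1+\widetilde{\temp}/\widehat{\temp})\|_{L^2}$. Since $\ln(1+\widetilde{\temp}/\widehat{\temp})$ vanishes on $\partial\Omega$, Poincar\'e and the lower bound $\widehat{\temp}\geq c_0$ give $\|\ln(1+\widetilde{\temp}/\widehat{\temp})\|_{L^2}^2\leq C\int_\Omega \kapparef\widehat{\temp}\absnorm{\nabla\ln(1+\widetilde{\temp}/\widehat{\temp})}^2\cvolumee$, and Korn plus Poincar\'e gives $\|\widetilde{\vec{v}}\|_{L^2}^2\leq C'\int_\Omega 2\mu\tensordot{\widetilde{\gradsym}}{\widetilde{\gradsym}}\cvolumee$. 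A further Cauchy--Schwarz in time then bounds $\int_0^T |\cdots|\,\diff\tau$ by the geometric mean of (\ref{eq:57}) and (\ref{eq:59}), both already shown to be finite. The only genuinely nontrivial step in the whole program is the Young-inequality absorption above; everything else is routine once the kinetic-energy dissipation bound is in hand.
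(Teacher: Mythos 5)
Your proof is correct, but for \eqref{eq:58}--\eqref{eq:60} it follows a genuinely different route from the paper's. The paper keeps the cross term in the integrated-by-parts form $\int_\Omega\rho(\vectordot{\nabla\widehat{\temp}}{\widetilde{\vec{v}}})\relentropy\,\cvolumee$ (identity \eqref{eq:102}), applies the $\varepsilon$-Young inequality to produce $\varepsilon\norm[\sleb{2}{\Omega}]{\relentropy}^2$ plus an exponentially decaying multiple of $\left.\norm[\sleb{2}{\Omega}]{\widetilde{\vec{v}}}^2\right|_{t=0}$, and absorbs the $\norm[\sleb{2}{\Omega}]{\relentropy}^2$ piece into the gradient term via Poincar\'e --- which requires choosing $\varepsilon$ small enough that the constant $\coeffC$ in \eqref{eq:113} is positive; integrating \eqref{eq:112} then yields \eqref{eq:116}, i.e.\ also $\int_0^{+\infty}\norm[\sleb{2}{\Omega}]{\relentropy}^2\,\diff\tau<+\infty$, which is fed back into \eqref{eq:109} to get \eqref{eq:60}, and a second integration of \eqref{eq:101} gives \eqref{eq:58}--\eqref{eq:59}. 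You instead (i) get \eqref{eq:58} immediately from \eqref{eq:57} by the pointwise bound $1+\widetilde{\temp}/\widehat{\temp}=\temp/\widehat{\temp}\geq c_0>0$ --- legitimate here, since the paper asserts the uniform positive lower bound on $\temp$ in Section~\ref{sec:boundary-conditions} and relies on it again before Corollary~\ref{crl:1}, and $\widehat{\temp}$ is bounded above by the maximum principle --- (ii) apply Young directly to $\widetilde{\vec{v}}\cdot\nabla\relentropy$ so that the gradient piece is absorbed into the Dirichlet term without Poincar\'e and without any smallness condition on $\varepsilon$ relative to $\min\widehat{\temp}/\max\absnorm{\nabla\widehat{\temp}}$, the remainder being $C_\varepsilon\absnorm{\widetilde{\vec{v}}}^2$ which is time-integrable by \eqref{eq:57} via Korn and Poincar\'e, and (iii) obtain \eqref{eq:60} by a space--time Cauchy--Schwarz combining \eqref{eq:57} and \eqref{eq:59}. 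Your absorption step is cleaner and more elementary; what it gives up is the by-product $\int_0^{+\infty}\norm[\sleb{2}{\Omega}]{\relentropy}^2\,\diff\tau<+\infty$ of the paper's estimate \eqref{eq:116}, which is the structural template reused for the whole family of functionals in Sections~\ref{sec:reth-devel-lyap}--\ref{sec:appl-lemma-decay}, and your step (i) uses the uniform lower bound $c_0$ on $\temp/\widehat{\temp}$, which the paper's derivation of \eqref{eq:58} does not need (it only needs positivity of the integrand).
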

Note that Lemma~\ref{lm:3} shows that all the terms on the right-hand side of~\eqref{eq:55} have bounded integrals if integrated with respect to time from zero to infinity. (Regarding the last term on the right-hand side see identity~\eqref{eq:102} and~\eqref{eq:60}.) 

\subsection{Construction of a candidate for a Lyapunov type functional}
\label{sec:constr-lyap-funct}
A candidate for a Lyapunov type functional is, according to~\cite{bulcek.m.malek.j.ea:thermodynamics}, defined as
\begin{equation}
  \label{eq:61}
  \mathcal{V}_{\mathrm{meq}}
  \left(
    \left.
      \widetilde{\vec{W}}
    \right\|
    \widehat{\vec{W}}
  \right)
  =
  -
  \left\{
    \entropyrellp_{\widehat{\temp}}(\left. \widetilde{\vec{W}} \right\| \widehat{\vec{W}})
    -
    \energyrellp  (\left. \widetilde{\vec{W}} \right\| \widehat{\vec{W}})
  \right\}
  ,
\end{equation}
where
\begin{subequations}
  \label{eq:62}
  \begin{align}
    \label{eq:63}
    \entropyrellp_{\widehat{\temp}}(\left. \widetilde{\vec{W}} \right\| \widehat{\vec{W}})
    &=
      _{\bydefinition}
      \netentropy_{\widehat{\temp}} 
      \left(
      \widehat{\vec{W}}
      +
      \widetilde{\vec{W}}
      \right)
      -
      \netentropy_{\widehat{\temp}} 
      \left(
      \widehat{\vec{W}}
      \right)
      -
      \left.
      \Diff \netentropy_{\widehat{\temp}}
      \left(
      \vec{W}
      \right)
      \right|_{\vec{W}  = \widehat{\vec{W}}}
      \left[
      \widetilde{\vec{W}}
      \right]
      ,
    \\
    \label{eq:64}
    \energyrellp  (\left. \widetilde{\vec{W}} \right\| \widehat{\vec{W}})
    &=
      _{\bydefinition}
      \nettenergy 
      \left(
      \widehat{\vec{W}}
      +
      \widetilde{\vec{W}}
      \right)
      -
      \nettenergy 
      \left(
      \widehat{\vec{W}}
      \right)
      -
      \left.
      \Diff \nettenergy
      \left(
      \vec{W}
      \right)
      \right|_{\vec{W}  = \widehat{\vec{W}}}
      \left[
      \widetilde{\vec{W}}
      \right]
      ,
  \end{align}
\end{subequations}
and the functionals $\netentropy_{\widehat{\temp}} \left(\vec{W}\right)$ and $\nettenergy\left(\vec{W}\right)$ are defined as
\begin{equation}
  \label{eq:65}
    \netentropy_{\widehat{\temp}} \left(\vec{W}\right)
    =
      _{\bydefinition}
      \int_{\Omega}
      \rho
      \widehat{\temp}
      \entropy(\vec{W})
      \,
      \cvolumee
      ,
    \qquad
    \nettenergy\left(\vec{W}\right)
    =
      _{\bydefinition}
      \int_{\Omega}
      \left(
      \rho
      \ienergy(\vec{W})
      +
      \frac{1}{2} \rho \absnorm{\vec{v}}^2
      \right)
      \,
      \cvolumee
      .
\end{equation}
The symbols
$
\left.
\Diff \netentropy_{\widehat{\temp}}
\left(
  \vec{W}
\right)
\right|_{\vec{W}  = \widehat{\vec{W}}}
\left[
  \widetilde{\vec{W}}
\right]
$
and
$
\left.
\Diff \nettenergy
\left(
  \vec{W}
\right)
\right|_{\vec{W}  = \widehat{\vec{W}}}
\left[
  \widetilde{\vec{W}}
\right]
$
denote the G\^ateaux derivative of the given functionals at point $\widehat{\vec{W}}$ in the direction $\widetilde{\vec{W}}$. In our case, the specific entropy~$\entropy$ and specific internal energy~$\ienergy$ are given by formulae~\eqref{eq:43}, and the G\^ateaux derivatives of the functionals $\netentropy_{\widehat{\temp}} \left(\vec{W}\right)$ and $\nettenergy\left(\vec{W}\right)$ read
  \begin{equation}
    \label{eq:72}
    \left.
    \Diff[\vec{W}] \netentropy_{\widehat{\temp}} (\vec{W})
    \right|_{
    \vec{W} = \widehat{\vec{W}}
    }
    \left[
    \widetilde{\vec{W}}
    \right]
    =
      \int_{\Omega}
      \rho
      \cheatvolref \widetilde{\temp}
      \,
      \cvolumee
      ,
      \qquad
    \left.
    \Diff[\vec{W}] \nettenergy (\vec{W})
    \right|_{
    \vec{W} = \widehat{\vec{W}}
    }
    \left[
    \widetilde{\vec{W}}
    \right]
    =
      \int_{\Omega}
      \rho
      \cheatvolref \widetilde{\temp}
      \,
      \cvolumee
      .
    \end{equation}
    (Recall that we are investigating the stability of the non-equilibrium steady state $\widehat{\vec{W}}$ that does not evolve in time and where $\widehat{\vec{v}}=\vec{0}$.) Consequently, it is straightforward to see that the particular formulae for the functionals~$\entropyrellp_{\widehat{\temp}}(\left. \widetilde{\vec{W}} \right\| \widehat{\vec{W}})$ and $\energyrellp  (\left. \widetilde{\vec{W}} \right\| \widehat{\vec{W}})$ read
\begin{equation}
  \label{eq:74}
  \entropyrellp_{\widehat{\temp}}(\left. \widetilde{\vec{W}} \right\| \widehat{\vec{W}})
  =
  \int_{\Omega}
  \rho
  \cheatvolref
  \widehat{\temp}
  \left[
    \ln \left( 1 + \frac{\widetilde{\temp}}{\widehat{\temp}} \right)
    -
    \frac{\widetilde{\temp}}{\widehat{\temp}}
  \right]
  \,
  \cvolumee
  ,
  \quad
  \energyrellp  (\left. \widetilde{\vec{W}} \right\| \widehat{\vec{W}})
  =
  \int_{\Omega}
  \frac{1}{2} \rho \absnorm{\widetilde{\vec{v}}}^2
  \,
  \cvolumee
  .
\end{equation}
Using~\eqref{eq:74} in~\eqref{eq:61} we get the formula~\eqref{eq:54} for the candidate for a Lyapunov type functional.
It is straightforward to check that the proposed functional vanishes if and only if the perturbation vanishes, and that the functional is non-negative.

\subsection{Time derivative of our candidate for a Lyapunov type functional}
\label{sec:time-deriv-lyap}

It remains to investigate the time derivative of our candidate for a Lyapunov type functional~$\mathcal{V}_{\mathrm{meq}}$. If we were dealing with a genuine Lyapunov functional we would need to show that it is negative. Using the explicit formulae for the entropy and internal energy~\eqref{eq:43} it follows that
\begin{multline}
  \label{eq:77}
  \dd{}{t}
  \mathcal{V}_{\mathrm{meq}}
  \left(
    \left.
      \widetilde{\vec{W}}
    \right\|
    \widehat{\vec{W}}
  \right)
  =
  -
  \dd{}{t}
    \int_{\Omega}
    \rho
    \cheatvolref
    \widehat{\temp}
    \ln
    \left(
      \frac{\widehat{\temp} + \widetilde{\temp}}{\tempref}
    \right)
    \,
    \cvolumee
    +
    \dd{}{t}
    \int_{\Omega}
    \rho
    \cheatvolref
    \widehat{\temp}
    \ln
    \left(
      \frac{\widehat{\temp}}{\tempref}
    \right)
    \,
    \cvolumee
    \\
    +
    \dd{}{t}
    \int_{\Omega}
    \rho
    \cheatvolref
    \widetilde{\temp}
    \,
    \cvolumee
  +
  \dd{}{t}
  \nettenergy 
  \left(
    \widehat{\vec{W}}
    +
    \widetilde{\vec{W}}
  \right)
  -
  \dd{}{t}
  \nettenergy 
  \left(
    \widehat{\vec{W}}
  \right)
  -
  \dd{}{t}
  \int_{\Omega}
  \rho
  \cheatvolref
  \widetilde{\temp}
  \,
  \cvolumee
  -
  \int_{\Omega}
  \rho \vectordot{\widehat{\vec{v}}}{\widetilde{\vec{v}}}
  \,
  \cvolumee
  \\
  =
  -
  \int_{\Omega}
  \rho
  \widehat{\temp}
  \pd{\relentropy}{t}
  \,
  \cvolumee
  +
  \dd{}{t}
  \nettenergy 
  \left(
    \widehat{\vec{W}}
    +
    \widetilde{\vec{W}}
  \right)
  ,
\end{multline}
where we have introduced the relative entropy
\begin{equation}
  \label{eq:78}
  \relentropy =_{\bydefinition} \entropy(\widehat{\vec{W}} + \widetilde{\vec{W}}) - \entropy(\widehat{\vec{W}}).
\end{equation}
Note that in our case the explicit formula \eqref{eq:43} for the entropy implies that
\begin{equation}
  \label{eq:79}
  \relentropy = \cheatvolref \ln \left( 1 + \frac{\widetilde{\temp}}{\widehat{\temp}}\right).
\end{equation}
Consequently, we can work with a very simple formula
\begin{equation}
  \label{eq:80}
  \dd{}{t}
  \mathcal{V}_{\mathrm{meq}}
  \left(
    \left.
      \widetilde{\vec{W}}
    \right\|
    \widehat{\vec{W}}
  \right)
  =
  -
  \int_{\Omega}
  \rho
  \widehat{\temp}
  \pd{\relentropy}{t}
  \,
  \cvolumee
  +
  \dd{}{t}
  \nettenergy 
  \left(
    \widehat{\vec{W}}
    +
    \widetilde{\vec{W}}
  \right)
  .
\end{equation}

In order to get an explicit formula for the time derivative of the relative entropy $\relentropy$ in~\eqref{eq:80} we have to use the evolution equation for the entropy. The evolution equation for the entropy is~\eqref{eq:48}, which implies that\footnote{Recall that the material time derivative in~\eqref{eq:87} is taken with respect to the perturbed velocity field, that is 
  $    
  \dd{\entropy}{t}(\widehat{\vec{W}} + \widetilde{\vec{W}})
  =
  \pd{\entropy}{t}(\widehat{\vec{W}} + \widetilde{\vec{W}})
  +
  \vectordot{
    \left(\widehat{\vec{v}} + \widetilde{\vec{v}}\right)
  }
  {
    \nabla
    \entropy (\widehat{\vec{W}} + \widetilde{\vec{W}})
  }
  $, 
  while the material time derivative in~\eqref{eq:86} is taken with respect to the reference steady velocity field, that is
  $
  \dd{\entropy}{t}(\widehat{\vec{W}})
  =
  \pd{\entropy}{t}(\widehat{\vec{W}})
  +
  \vectordot{
    \widehat{\vec{v}}
  }
  {
    \nabla
    \entropy (\widehat{\vec{W}})
  }
  $%
  .} 
\begin{subequations}
  \label{eq:85}
  \begin{align}
    \label{eq:86}
    \rho
    \dd{\entropy}{t}(\widehat{\vec{W}})
    &=
      \frac{\entprodctemp_{\mathrm{th}} \left( \widehat{\vec{W}} \right)}{\widehat{\temp}}
      +
      \frac{\entprodctemp_{\mathrm{mech}} \left( \widehat{\vec{W}} \right)}{\widehat{\temp}}
      -
      \divergence
      \left(
      \frac{\hfluxc(\widehat{\vec{W}})}{\widehat{\temp}}
      \right)
      ,
    \\
    \label{eq:87}
    \rho
    \dd{\entropy}{t}(\widehat{\vec{W}} + \widetilde{\vec{W}})
    &=
      \frac{\entprodctemp_{\mathrm{th}} \left( \widehat{\vec{W}} + \widetilde{\vec{W}} \right)}{\widehat{\temp} + \widetilde{\temp}}
      +
      \frac{\entprodctemp_{\mathrm{mech}} \left( \widehat{\vec{W}} + \widetilde{\vec{W}} \right)}{\widehat{\temp} + \widetilde{\temp}}
      -
      \divergence
      \left(
      \frac{\hfluxc(\widehat{\vec{W}} + \widetilde{\vec{W}})}{\widehat{\temp} + \widetilde{\temp}}
      \right)
      .
  \end{align}
\end{subequations}
The relation between the temperature and the entropy is, for the model we are interested in, given by the formula~\eqref{eq:43}. Furthermore, the formula for the thermal part of the entropy production $\entprodctemp_{\mathrm{th}}$~\eqref{eq:49}, while the heat flux $\hfluxc$ is given by the Fourier law, which implies that
\begin{equation}
  \label{eq:88}
    \frac{\entprodctemp_{\mathrm{th}} (\vec{W})}{\temp}
    =
      \frac{\kapparef}{\cheatvolref^2}
      \vectordot{\nabla \entropy (\vec{W})}{\nabla \entropy (\vec{W})}
      ,
    \qquad
    \frac{\hfluxc(\vec{W})}{\temp}
    =
    - \frac{\kapparef}{\cheatvolref} \nabla \entropy(\vec{W}).
\end{equation}
Observations~\eqref{eq:88} and evolution equations~\eqref{eq:85} yield the evolution equation for the relative entropy $\relentropy$ in the form
\begin{multline}
  \label{eq:91}
  \rho \pd{\relentropy}{t}
  =
  \frac{\kapparef}{\cheatvolref^2}
  \vectordot{\nabla \relentropy}{\nabla \relentropy}
  +
  \frac{2 \kapparef}{\cheatvolref^2}
  \vectordot{\nabla \relentropy}{\nabla \widehat{\entropy}}
  +
  \divergence
  \left(
    \frac{\kapparef}{\cheatvolref}
    \nabla \relentropy
  \right)
  \\
  +
  \left[
    \frac{\entprodctemp_{\mathrm{mech}} \left( \widehat{\vec{W}} + \widetilde{\vec{W}} \right)}{\widehat{\temp} + \widetilde{\temp}}
    -
    \frac{\entprodctemp_{\mathrm{mech}} \left( \widehat{\vec{W}} \right)}{\widehat{\temp}}
  \right]
  -
  \rho
  \left[
    \vectordot{
      \left(\widehat{\vec{v}} + \widetilde{\vec{v}} \right)
    }
    {
      \nabla \relentropy
    }
    +
    \vectordot{
      \widetilde{\vec{v}}
    }
    {
      \nabla \widehat{\entropy}
    }
  \right]
  .
\end{multline}
If we take into account that in our case we have $\widehat{\vec{v}} = \vec{0}$,  and that the mechanical part of the entropy production $\entprodctemp_{\mathrm{mech}}$ is given by~\eqref{eq:49}, then~\eqref{eq:91} reduces to
\begin{multline}
  \label{eq:92}
  \rho \pd{\relentropy}{t}
  =
  \frac{\kapparef}{\cheatvolref^2}
  \vectordot{\nabla \relentropy}{\nabla \relentropy}
  +
  \frac{2 \kapparef}{\cheatvolref^2}
  \vectordot{\nabla \relentropy}{\nabla \widehat{\entropy}}
  +
  \divergence
  \left(
    \frac{\kapparef}{\cheatvolref}
    \nabla \relentropy
  \right)
  +
  \frac{2 \mu \tensordot{\widetilde{\gradsym}}{\widetilde{\gradsym}}}{\widehat{\temp} + \widetilde{\temp}}
  \\
  -
  \rho
  \left[
    \vectordot{
      \widetilde{\vec{v}}
    }
    {
      \nabla \relentropy
    }
    +
    \vectordot{
      \widetilde{\vec{v}}
    }
    {
      \nabla \widehat{\entropy}
    }
  \right]
  .
\end{multline}
On the other hand, the evolution equations for the net total energy $\nettenergy$ read
\begin{subequations}
  \label{eq:93}
  \begin{align}
    \label{eq:94}
    \dd{}{t}
    \nettenergy
    \left(
    \widehat{\vec{W}}
    +
    \widetilde{\vec{W}}
    \right)
    &=
      \int_{\Omega}
      \divergence
      \left(
      \cstress
      \left(
      \widehat{\vec{W}}
      +
      \widetilde{\vec{W}}
      \right)
      \left(
      \widehat{\vec{v}}
      +
      \widetilde{\vec{v}}
      \right)
      \right)
      \,
      \cvolumee
      -
      \int_{\Omega}
      \divergence
      \hfluxc
      \left(
      \widehat{\vec{W}}
      +
      \widetilde{\vec{W}}
      \right)
      \,
      \cvolumee
      ,
    \\
    \label{eq:95}
    \dd{}{t}
    \nettenergy
    \left(
    \widehat{\vec{W}}
    \right)
    &=
      \int_{\Omega}
      \divergence
      \left(
      \cstress
      \left(
      \widehat{\vec{W}}
      \right)
      \widehat{\vec{v}}
      \right)
      \,
      \cvolumee
      -
      \int_{\Omega}
      \divergence
      \hfluxc
      \left(
      \widehat{\vec{W}}
      \right)
      \,
      \cvolumee
      .
  \end{align}
\end{subequations}
Note that since we deal with a steady non-equilibrium state $\widehat{\vec{W}}$, then the left-hand side of~\eqref{eq:95} is in fact zero. Subtracting~\eqref{eq:94} and~\eqref{eq:95} yields
\begin{equation}
  \label{eq:96}
  \dd{}{t}
  \nettenergy
  \left(
    \widehat{\vec{W}}
    +
    \widetilde{\vec{W}}
  \right)
  =
  \int_{\Omega}
  \divergence \left( \kapparef \nabla \widetilde{\temp} \right)
  \,
  \cvolumee
  ,
\end{equation}
where we have used the specific constitutive relations for the incompressible Navier--Stokes--Fourier fluid, and the fact that~$\widehat{\vec{v}} = \vec{0}$ and that $\widetilde{\vec{v}}$ vanishes on the boundary. 
Now we are in the position to substitute into~\eqref{eq:80}. Using~\eqref{eq:96} and~\eqref{eq:92} in~\eqref{eq:80} yields
\begin{multline}
  \label{eq:97}
  \dd{}{t}
  \mathcal{V}_{\mathrm{meq}}
  \left(
    \left.
      \widetilde{\vec{W}}
    \right\|
    \widehat{\vec{W}}
  \right)
  =
  -
  \int_{\Omega}
  \frac{\kapparef}{\cheatvolref^2}
  \widehat{\temp}
  \vectordot{\nabla \relentropy}{\nabla \relentropy}
  \,
  \cvolumee
  -
  \int_{\Omega}
  \frac{2 \kapparef}{\cheatvolref^2}
  \widehat{\temp}
  \vectordot{\nabla \relentropy}{\nabla \widehat{\entropy}}
  \,
  \cvolumee
  \\
  -
  \int_{\Omega}
  \widehat{\temp}
  \divergence
  \left(
    \frac{\kapparef}{\cheatvolref}
    \nabla \relentropy
  \right)
  \,
  \cvolumee
  -
  \int_{\Omega}
  2 \mu \tensordot{\widetilde{\gradsym}}{\widetilde{\gradsym}} \frac{\widehat{\temp}}{\widehat{\temp} + \widetilde{\temp}}
  \,
  \cvolumee
  \\
  +
  \int_{\Omega}
  \rho
  \widehat{\temp}
  \left[
    \vectordot{
      \widetilde{\vec{v}}
    }
    {
      \nabla \relentropy
    }
    +
    \vectordot{
      \widetilde{\vec{v}}
    }
    {
      \nabla \widehat{\entropy}
    }
  \right]
  \,
  \cvolumee
  +
  \int_{\Omega}
  \divergence \left( \kapparef \nabla \widetilde{\temp} \right)
  \,
  \cvolumee
  .
\end{multline}

Now we can utilise the lesson learned in the analysis of the simpler heat conduction problem in a rigid body, see~\cite{bulcek.m.malek.j.ea:thermodynamics} for details, and we can rewrite~\eqref{eq:97} as
\begin{multline}
  \label{eq:101}
  \dd{}{t}
  \mathcal{V}_{\mathrm{meq}}
  \left(
    \left.
      \widetilde{\vec{W}}
    \right\|
    \widehat{\vec{W}}
  \right)
  =
  -
  \int_{\Omega}
  \frac{\kapparef}{\cheatvolref^2}
  \widehat{\temp}
  \vectordot{\nabla \relentropy}{\nabla \relentropy}
  \,
  \cvolumee
  -
  \int_{\Omega}
  2 \mu \tensordot{\widetilde{\gradsym}}{\widetilde{\gradsym}} \frac{\widehat{\temp}}{\widehat{\temp} + \widetilde{\temp}}
  \,
  \cvolumee
  \\
  +
  \int_{\Omega}
  \rho
  \widehat{\temp}
  \vectordot{
    \widetilde{\vec{v}}
  }
  {
    \nabla \relentropy
  }
  \,
  \cvolumee
  ,
\end{multline}
where we have also used that fact that 
$
\int_{\Omega}
  \rho
  \widehat{\temp}
  \vectordot{
    \widetilde{\vec{v}}
  }
  {
    \nabla \widehat{\entropy}
  }
  \,
  \cvolumee
  =
  0
$.
(This again follows from Stokes theorem and the fact that $\widehat{\temp} \nabla \widehat{\entropy}$ can be rewritten as $\nabla F(\widehat{\temp})$.) This manipulation finishes the proof of Lemma~\ref{lm:2}. An alternative derivation of the formula for the time derivative of the proposed functional is given in Appendix~\ref{sec:altern-deriv-form}, where we in fact derive the \emph{pointwise evolution equation} for the thermal part of the integrand in the functional $\mathcal{V}_{\mathrm{meq}}$.

Note that the last term in~\eqref{eq:101} can be also rewritten in a different form. Using the Stokes theorem and the fact that the perturbation velocity $\widetilde{\vecv}$ as well as $\Hdiff$ vanish on the boundary, and that the perturbation velocity field has zero divergence, we see that
\begin{equation}
  \label{eq:102}
  \int_{\Omega}
  \rho
  \widehat{\temp}
  \vectordot{
    \widetilde{\vec{v}}
  }
  {
    \nabla \relentropy
  }
  \,
  \cvolumee
  =
  -
  \int_{\Omega}
  \rho
  \left(
    \vectordot
    {
      \nabla \widehat{\temp}
    }
    {
      \widetilde{\vec{v}}
    }
  \right)
  \relentropy
  \,
  \cvolumee
  ,
\end{equation}
which among other things shows that the term vanishes for spatially homogeneous field $\widehat{\temp}$.

All the terms in~\eqref{eq:101} are at least ``quadratic'' in the perturbation as expected. The right-hand side of~\eqref{eq:101} \emph{does not contain boundary terms}, although we are working with a thermodynamically open system. This follows from the design of the proposed functional. We also see that the first two terms on the right-hand side of~\eqref{eq:101} have a sign, while the last term is indefinite. The presence of the last term prohibits one from showing that the time derivative of the functional $\mathcal{V}_{\mathrm{meq}}$ is, for a non-constant~$\widehat{\temp}$, a non-positive quantity. Consequently, $\mathcal{V}_{\mathrm{meq}}$ can not directly serve as a Lyapunov type functional.

\subsection{Estimate on the time derivative of the candidate for a Lyapunov type functional}
\label{sec:estim-time-deriv}
The evolution equation for the perturbation velocity is~\eqref{eq:34}, which upon testing by the perturbation velocity $\widetilde{\vecv}$ yields
\begin{equation}
  \label{eq:104}
  \frac{\rho}{2}
  \dd{}{t}
  \norm[\sleb{2}{\Omega}]{\widetilde{\vecv}}^2
  =
  -
  \int_{\Omega}
  2 \mu \tensordot{\widetilde{\gradsym}}{\widetilde{\gradsym}}
  \,
  \cvolumee.
\end{equation}
(Recall that $\widetilde{\vec{v}}$ satisfies zero Dirichlet boundary conditions.) Integrating~\eqref{eq:104} with respect to time, we get
\begin{equation}
  \label{eq:105}
  \frac{\rho}{2} \norm[\sleb{2}{\Omega}]{\widetilde{\vecv}}^2
  =
  \frac{\rho}{2} \left. \norm[\sleb{2}{\Omega}]{\widetilde{\vecv}}^2 \right|_{t=0}
  -
  \int_{\tau=0}^t
  \left(
    \int_{\Omega}
    2 \mu \tensordot{\widetilde{\gradsym}}{\widetilde{\gradsym}}
    \,
    \cvolumee
  \right)
  \,
  \diff \tau
  .
\end{equation}
Since $\frac{\rho}{2} \norm[\sleb{2}{\Omega}]{\widetilde{\vecv}}^2$ is a \emph{positive} quantity, we see that~\eqref{eq:105} implies that the dissipated energy is finite,
$
\int_{\tau=0}^t
  \left(
    \int_{\Omega}
    2 \mu \tensordot{\widetilde{\gradsym}}{\widetilde{\gradsym}}
    \,
    \cvolumee
  \right)
  \,
  \diff \tau
  <
  + \infty.
$
This is an important, yet trivial finding, and we will formulate it, for the sake of further reference, as Lemma~\ref{lm:3}, equation~\eqref{eq:57}.

Moreover, using the standard manipulation based on Poincar\'e and Korn (in)equality\footnote{Auxiliary tools from the theory of function spaces are for convenience summarised in Appendix~\ref{sec:auxiliary-tools}.}, we obtain the following estimate on the time derivative of the norm~$\norm[\sleb{2}{\Omega}]{\widetilde{\vecv}}^2$ 
\begin{equation}
  \label{eq:107}
  \frac{\rho}{2} \dd{}{t} \norm[\sleb{2}{\Omega}]{\widetilde{\vecv}}^2 \leq - \frac{\mu}{C_P} \norm[\sleb{2}{\Omega}]{\widetilde{\vecv}}^2,
\end{equation}
which yields
\begin{equation}
  \label{eq:108}
  \norm[\sleb{2}{\Omega}]{\widetilde{\vecv}}^2 \leq  \left. \norm[\sleb{2}{\Omega}]{\widetilde{\vecv}}^2 \right|_{t=0} \exponential{- \frac{2 \mu}{C_P \rho}t},
\end{equation}
which is the standard result, see for example~\cite{serrin.j:on}. \emph{Note that if we were dealing for example with a temperature dependent viscosity, it would be sufficient to have the viscosity that is bounded from below, and we would still get an exponential decay of the net kinetic energy.}

Using~\eqref{eq:108} and the $\varepsilon$--Young inequality $ab \leq \varepsilon a^2 + \frac{b^2}{4\varepsilon}$ then gives us
\begin{equation}
  \label{eq:109}
  \absnorm{
  \int_{\Omega}
  \rho
  \left(
  \vectordot{
    \widetilde{\vec{v}}
  }
  {
    \nabla 
    \widehat{\temp}
  }
\right)
  \relentropy
  \,
  \cvolumee
  }
  \leq
  \varepsilon
  \rho
  \max_{\vec{x} \in \Omega}
  \absnorm{\nabla \widehat{\temp}}
  \norm[\sleb{2}{\Omega}]{\relentropy}^2
  +
  \frac{\rho}{4\varepsilon}
  \max_{\vec{x} \in \Omega}
  \absnorm{\nabla \widehat{\temp}}
  \left. \norm[\sleb{2}{\Omega}]{\widetilde{\vecv}}^2 \right|_{t=0} \exponential{- \frac{2 \mu}{C_P \rho}t}
  .
\end{equation}
Furthermore, using again Poincar\'e inequality we get
\begin{equation}
  \label{eq:110}
  -
  \int_{\Omega}
  \frac{\kapparef}{\cheatvolref^2}
  \widehat{\temp}
  \vectordot{\nabla \relentropy}{\nabla \relentropy}
  \,
  \cvolumee
  \leq
  -
  \left(
    \min_{\vec{x} \in \Omega}
    \widehat{\temp}
  \right)
  \frac{\kapparef}{C_P \cheatvolref^2}
  \norm[\sleb{2}{\Omega}]{\Hdiff}^2
  ,
\end{equation}
and utilising~\eqref{eq:110} and~\eqref{eq:109} in the formula~\eqref{eq:101} for the time derivative of the proposed Lyapunov type functional we arrive at the inequality
\begin{subequations}
  \label{eq:111}
\begin{multline}
  \label{eq:112}
  \dd{}{t}
  \mathcal{V}_{\mathrm{meq}}
  \left(
    \left.
      \widetilde{\vec{W}}
    \right\|
    \widehat{\vec{W}}
  \right)
  \leq
  -
  \left(
    \frac{\kapparef}{C_P \cheatvolref^2}
    \min_{\vec{x} \in \Omega}
    \widehat{\temp}
    -
    \varepsilon
    \rho
    \max_{\vec{x} \in \Omega}
    \absnorm{\nabla \widehat{\temp}}
  \right)
  \norm[\sleb{2}{\Omega}]{\Hdiff}^2
  \\
  -
  \int_{\Omega}
  2 \mu \tensordot{\widetilde{\gradsym}}{\widetilde{\gradsym}} \frac{\widehat{\temp}}{\widehat{\temp} + \widetilde{\temp}}
  \,
  \cvolumee
  +
  \frac{\rho}{4 \varepsilon}
  \max_{\vec{x} \in \Omega}
  \absnorm{\nabla \widehat{\temp}}
  \left. \norm[\sleb{2}{\Omega}]{\widetilde{\vecv}}^2 \right|_{t=0} \exponential{- \frac{2 \mu}{C_P \rho}t}
  ,
\end{multline}
where the coefficient multiplying the term $\norm[\sleb{2}{\Omega}]{\Hdiff}^2$ that is
\begin{equation}
  \label{eq:113}
  \coeffC
  =_{\bydefinition}
    \frac{\kapparef}{C_P \cheatvolref^2}
    \min_{\vec{x} \in \Omega}
    \widehat{\temp}
    -
    \varepsilon
    \rho
    \max_{\vec{x} \in \Omega}
    \absnorm{\nabla \widehat{\temp}}
    ,
  \end{equation}
\end{subequations}
can be made positive by a suitable choice of the parameter $\varepsilon$. (Note that~\eqref{eq:113} is always positive if the steady state temperature field $\widehat{\temp}$ is spatially homogeneous.) Therefore, the structure of the estimate on the time derivative is a favourable one. It contains non-positive terms and a positive term that decays in time to zero.

\subsection{Boundedness of some integrals}
\label{sec:bound-some-integr}
Inequality~\eqref{eq:112} can be gainfully exploited in the proof of the boundedness of some integrals, see Lemma~\ref{lm:3}. The last term on the right-hand side of~\eqref{eq:112} is integrable with respect to time, which follows from the direct integration of the exponential. The integration of~\eqref{eq:112} with respect to time yields
\begin{multline}
  \label{eq:115}
  \mathcal{V}_{\mathrm{meq}}
  \left(
    \left.
      \widetilde{\vec{W}}
    \right\|
    \widehat{\vec{W}}
  \right)
  \leq
  \left.
  \mathcal{V}_{\mathrm{meq}}
  \left(
    \left.
      \widetilde{\vec{W}}
    \right\|
    \widehat{\vec{W}}
  \right)
  \right|_{t=0}
  -
  \int_{\tau=0}^{t}
  \coeffC
  \norm[\sleb{2}{\Omega}]{\Hdiff}^2
  \,
  \diff \tau
  \\
  -
  \int_{\tau=0}^{t}
  \left(
    \int_{\Omega}
    \frac{2 \mu \tensordot{\widetilde{\gradsym}}{\widetilde{\gradsym}}}{1 + \frac{\widetilde{\temp}}{\widehat{\temp}}}
    \,
    \cvolumee
  \right)
  \,
  \diff \tau
  +
  \int_{\tau=0}^{t}
  \frac{\rho}{4 \varepsilon}
  \max_{\vec{x} \in \Omega}
  \absnorm{\nabla \widehat{\temp}}
  \left. \norm[\sleb{2}{\Omega}]{\widetilde{\vecv}}^2 \right|_{t=0} \exponential{- \frac{2 \mu}{C_P \rho}\tau}
  \,
  \diff
  \tau
  ,
\end{multline}
and since $\mathcal{V}_{\mathrm{meq}}$ is a non-negative quantity, we see that the first two integrals on the right-hand side must be bounded. In particular, we get
\begin{equation}
  \label{eq:116}
  \int_{\tau=0}^{+\infty}
  \left(
    \int_{\Omega}
    \frac{2 \mu \tensordot{\widetilde{\gradsym}}{\widetilde{\gradsym}}}{1 + \frac{\widetilde{\temp}}{\widehat{\temp}}}
    \,
    \cvolumee
  \right)
  \,
  \diff \tau
  <
  +
  \infty
  ,
  \qquad
  \int_{\tau=0}^{+\infty}
  \norm[\sleb{2}{\Omega}]{\Hdiff}^2
  \,
  \diff \tau
  <
  +
  \infty
  .
\end{equation}
Now we can use~\eqref{eq:116} in~\eqref{eq:109}, and we can conclude that
\begin{equation}
  \label{eq:120}
  \int_{\tau=0}^{+\infty}
  \absnorm{
  \int_{\Omega}
  \rho
  \left(
    \vectordot
    {
      \widetilde{\vec{v}}      
    }
    {
      \nabla \widehat{\temp}
    }
  \right)
  \relentropy
  \,
  \cvolumee
  }
  \,
  \diff \tau
  <
  + \infty
  ,
\end{equation}
which in virtue of~\eqref{eq:102} also means that
$
\int_{\tau=0}^{+\infty}
  \absnorm{
    \int_{\Omega}
    \rho
    \widehat{\temp}
    \vectordot{
      \widetilde{\vec{v}}
    }
    {
      \nabla \relentropy
    }
    \,
    \cvolumee
  }
  \,
  \diff \tau
  <
  + \infty
$.
Finally, once we have obtained~\eqref{eq:120} we can go back to~\eqref{eq:101}, and use the same argument as before, and show~\eqref{eq:58} and~\eqref{eq:59}. Indeed, integrating~\eqref{eq:101} with respect to time we get
\begin{multline}
  \label{eq:123}
  \mathcal{V}_{\mathrm{meq}}
  \left(
    \left.
      \widetilde{\vec{W}}
    \right\|
    \widehat{\vec{W}}
  \right)
  =
  \left.
  \mathcal{V}_{\mathrm{meq}}
  \left(
    \left.
      \widetilde{\vec{W}}
    \right\|
    \widehat{\vec{W}}
  \right)
  \right|_{t=0}
  -
  \int_{\tau=0}^{+\infty}
  \left(
  \int_{\Omega}
  \frac{\kapparef}{\cheatvolref^2}
  \widehat{\temp}
  \vectordot{\nabla \relentropy}{\nabla \relentropy}
  \,
  \cvolumee
\right)
\,
\diff \tau
\\
-
  \int_{\tau=0}^{+\infty}
  \left(
  \int_{\Omega}
  \frac{2 \mu \tensordot{\widetilde{\gradsym}}{\widetilde{\gradsym}}}{1 + \frac{\widetilde{\temp}}{\widehat{\temp}}}
  \,
  \cvolumee
\right)
\,
\diff \tau
  +
  \int_{\tau=0}^{+\infty}
  \left(
  \int_{\Omega}
  \rho
  \widehat{\temp}
  \vectordot{
    \widetilde{\vec{v}}
  }
  {
    \nabla \relentropy
  }
  \,
  \cvolumee
\right)
\,
\diff \tau
,
\end{multline}
and since $\mathcal{V}_{\mathrm{meq}}$ is a non-negative quantity, and the last term on the right-hand side of~\eqref{eq:123} is bounded, we see that the first two integrals on the right-hand side must be bounded. The manipulations described above give the proof of Lemma~\ref{lm:3}.

\subsection{Remarks}
\label{sec:remarks-1}

Although we have not succeeded with the concept of Lyapunov functional we are close to the desired stability result. The time derivative of the proposed functional is negative up to an exponentially decaying positive term, see~\eqref{eq:112}. Moreover, the formula~\eqref{eq:112} which reads 
\begin{multline}
  \label{eq:124}
  \dd{}{t}
  \int_{\Omega}
  \left[
    \rho
    \cheatvolref
    \widehat{\temp}
    \left[
      \frac{\widetilde{\temp}}{\widehat{\temp}}
      -
      \ln \left( 1 + \frac{\widetilde{\temp}}{\widehat{\temp}} \right)
    \right]
    +
    \frac{1}{2} \rho \absnorm{\widetilde{\vec{v}}}^2
  \right]
  \,
  \cvolumee
  \leq
  \frac{\rho}{4 \varepsilon}
  \max_{\vec{x} \in \Omega}
  \widehat{\temp}
  \left. \norm[\sleb{2}{\Omega}]{\widetilde{\vecv}}^2 \right|_{t=0} \exponential{- \frac{2 \mu}{C_P \rho}t}
  \\  
  -
  \coeffC
  \int_{\Omega}
  \left(
    \cheatvolref
    \ln
    \left(
      1
      +
      \frac{\widetilde{\temp}}{\widehat{\temp}}
    \right)
  \right)^2
  \,
  \cvolumee
  -
  \int_{\Omega}
  2 \mu \tensordot{\widetilde{\gradsym}}{\widetilde{\gradsym}} \frac{\widehat{\temp}}{\widehat{\temp} + \widetilde{\temp}}
  \,
  \cvolumee
  .
\end{multline}
is also almost the right one from the perspective of Lemma~\ref{lm:1}. Indeed, if $x \in (-1, x_{\mathrm{crit}})$ then
\begin{equation}
  \label{eq:125}
  -
  \left[
    \ln
    \left(
      1
      +
      x
    \right)
  \right]^2
  \leq
  -
  \left[
    x
    -
    \ln \left( 1 + x \right)
  \right]
  ,
\end{equation}
where $x_{\mathrm{crit}}$ is a positive number. (See Figure~\ref{fig:lyapunov-sketch-a} for a sketch of the functions.)
\begin{figure}[t]
  \centering
  \subfloat[\label{fig:lyapunov-sketch-a}Graphs of functions
    $
    -
    \left(
      \ln
      \left(
        1
        +
        x
      \right)
    \right)^2$
    and
    $
    -
    \left(
      x
      -
      \ln \left( 1 + x \right)
    \right)
    $%
    .
    The graphs intersect at the point $x_{\mathrm{crit}} \approx 5.00914$.
    ]{
    \includegraphics[width=0.46\textwidth]{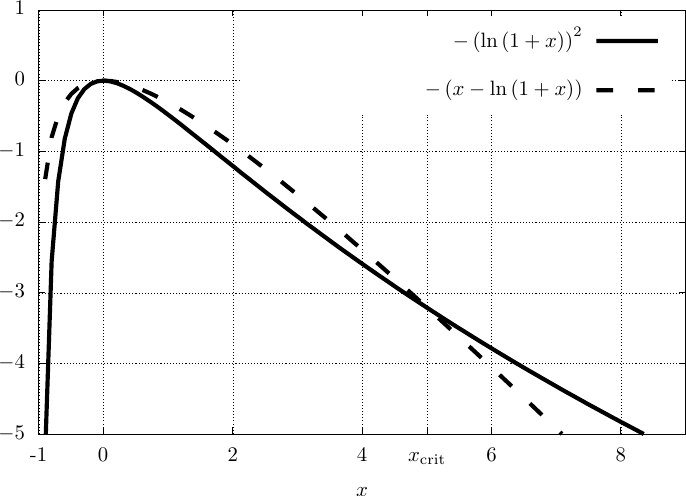}
  }
  \qquad
  \subfloat[\label{fig:lyapunov-sketch-b}
  Graphs of functions
  $
  -
  (
    (
      (
        1
        +
        x
      )^{\frac{m}{2}}
      -
      1
      )^2
    +
    (
      (
        1
        +
        x
      )^{\frac{n}{2}}
      -
      1
    )^2
  )
  $
  and
  $
  -
  n
  (
    \frac{1}{n}
    (
      1 + x
    )^{n}
    -
    \frac{1}{m}
    (
      1 + x
    )^{m}
    +
    \frac{n-m}{mn}
  )
  $. Functions are plotted for $n = \frac{8}{10}$ and $m = \frac{7}{10}$. 
  ]{\includegraphics[width=0.46\textwidth]{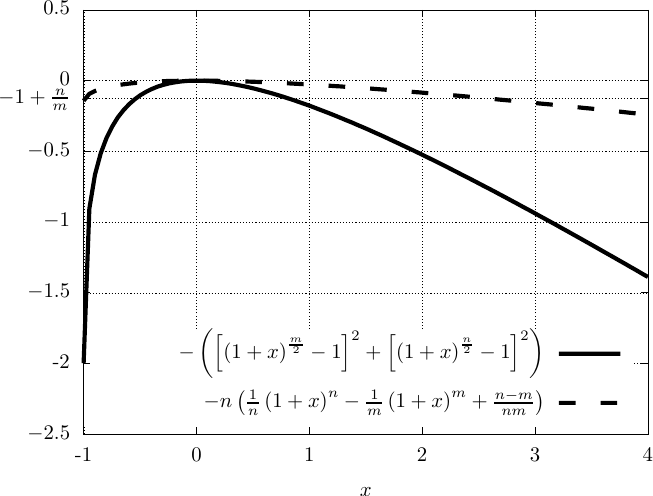}}
  \label{fig:lyapunov-sketch}
  \caption{Plots of auxiliary functions.}
\end{figure}
Consequently if the temperature ratio $\frac{\widetilde{\temp}}{\widehat{\temp}}$ is in the pointwise sense in the right interval, then we can rewrite~\eqref{eq:124} as
\begin{multline}
  \label{eq:126}
  \dd{}{t}
  \int_{\Omega}
  \left[
    \rho
    \cheatvolref
    \widehat{\temp}
    \left[
      \frac{\widetilde{\temp}}{\widehat{\temp}}
      -
      \ln \left( 1 + \frac{\widetilde{\temp}}{\widehat{\temp}} \right)
    \right]
    +
    \frac{1}{2} \rho \absnorm{\widetilde{\vec{v}}}^2
  \right]
  \,
  \cvolumee
  \leq
    \frac{\rho}{4 \varepsilon}
  \max_{\vec{x} \in \Omega}
  \widehat{\temp}
  \left. \norm[\sleb{2}{\Omega}]{\widetilde{\vecv}}^2 \right|_{t=0} \exponential{- \frac{2 \mu}{C_P \rho}t}
  \\
  -
  \coeffC
  \int_{\Omega}
  \cheatvolref^2
  \left[
    \frac{\widetilde{\temp}}{\widehat{\temp}}
    -
    \ln \left( 1 + \frac{\widetilde{\temp}}{\widehat{\temp}} \right)
  \right]
  \,
  \cvolumee
  -
  \int_{\Omega}
  2 \mu \tensordot{\widetilde{\gradsym}}{\widetilde{\gradsym}} \frac{\widehat{\temp}}{\widehat{\temp} + \widetilde{\temp}}
  \,
  \cvolumee
,
\end{multline}
which would allow us to directly exploit Lemma~\ref{lm:1}. (We would obtain the same integrand on the both sides of the inequality.) Unfortunately, we are not able to show that the temperature perturbation remains in the pointwise sense in the interval that guarantees the validity of~\eqref{eq:125}. The temperature perturbation $\widetilde{\temp}$ is only known to be uniformly bounded from \emph{below}, a uniform bound from above is not available.

\section{Rethinking the development of the candidate for a Lyapunov type functional}
\label{sec:reth-devel-lyap}
The inequality~\eqref{eq:125} that would guarantee the validity of the manipulations described above in Section~\ref{sec:remarks-1} seems to introduce a technical rather than a physical restriction. We will try to overcome this apparently unphysical restriction via a suitable choice of the temperature scale. The key findings of this section are the following.

\begin{lemma}[Family of Lyapunov type functionals and their time derivatives]
  \label{lm:4}
  Let us consider the velocity/temperature perturbation $\widetilde{\temp}$, $\widetilde{\vec{v}}$ governed by equations~\eqref{eq:governing-equations-perturbation}. Let us define the functionals
  \begin{equation}
    \label{eq:136}
    \mathcal{V}_{\mathrm{meq}}^{\vartemp,\, m}
    \left(
      \left.
        \widetilde{\vec{W}}
      \right\|
      \widehat{\vec{W}}
    \right)
    =_{\bydefinition}
    \int_{\Omega}
    \rho
    \cheatvolref
    \widehat{\temp}
    \left[
      \frac{\widetilde{\temp}}{\widehat{\temp}}
      -
      \frac{1}{m}
      \left(
        \left(
          1 + \frac{\widetilde{\temp}}{\widehat{\temp}}
        \right)^{m}
        -
        1
      \right)
    \right]
    \,
    \cvolumee
    +
    \int_{\Omega}
    \frac{1}{2} \rho \absnorm{\widetilde{\vec{v}}}^2
    \,
    \cvolumee   
    ,
  \end{equation}
  where we use the notation introduced in Section~\ref{sec:preliminaries}, and where $m \in (0,1)$. For fixed $m \in (0,1)$ any functional in family~\eqref{eq:136} remains non-negative for all possible temperature perturbations $\widetilde{\temp} \in (-\widehat{\temp}, + \infty)$, and it vanishes if and only if the perturbation~$\widetilde{\temp}$ vanishes everywhere in the domain of interest. Furthermore, the time derivative of the functionals is given by the formula
  \begin{multline}
    \label{eq:137}
    \dd{}{t}
    \mathcal{V}_{\mathrm{meq}}^{\vartemp,\, m}
    \left(
      \left.
        \widetilde{\vec{W}}
      \right\|
      \widehat{\vec{W}}
    \right)
    =
    -
    \int_{\Omega}
    4
    \frac{1-m}{m^2}
    \kapparef
    \widehat{\temp}
    \vectordot{
      \nabla
      \left[
        \left(
          1
          +
          \frac{\widetilde{\temp}}{\widehat{\temp}}
        \right)^{\frac{m}{2}}
        -
        1
      \right]
    }
    {
      \nabla
      \left[
        \left(
          1
          +
          \frac{\widetilde{\temp}}{\widehat{\temp}}
        \right)^{\frac{m}{2}}
        -
        1
      \right]
    }
    \,
    \cvolumee
    \\
    -
    \int_{\Omega}
    \frac{
      2 \mu \tensordot{\widetilde{\gradsym}}{\widetilde{\gradsym}}
    }
    {
      \left(
        1
        +
        \frac{\widetilde{\temp}}{\widehat{\temp}}
      \right)^{1-m}
    }
    \,
    \cvolumee
    -
    \int_{\Omega}
    \rho
    \cheatvolref
    \left(
      \vectordot{\nabla \widehat{\temp}}{\widetilde{\vec{v}}}
    \right)
    \frac{1-m}{m}
    \left[
      \left(
        1
        +
        \frac{\widetilde{\temp}}{\widehat{\temp}}
      \right)^m
      -
      1
    \right]
    .
  \end{multline}
\end{lemma}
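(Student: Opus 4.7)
The plan has two separate parts. For \textbf{non-negativity}, the temperature integrand in~\eqref{eq:136} equals $\rho \cheatvolref \widehat{\temp}\, \phi_m(\widetilde{\temp}/\widehat{\temp})$, where $\phi_m(y) =_{\bydefinition} y - \frac{1}{m}\bigl((1+y)^m - 1\bigr)$ on $(-1,\infty)$. A direct check gives $\phi_m(0) = 0$ and $\phi_m'(y) = 1 - (1+y)^{m-1}$. Since $m-1 < 0$, the map $y \mapsto (1+y)^{m-1}$ is strictly decreasing and crosses $1$ at $y=0$, so $\phi_m'$ changes sign from negative to positive at $0$; hence $\phi_m \geq 0$ with equality if and only if $y = 0$. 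The kinetic integrand is manifestly non-negative, so the functional vanishes precisely when $\widetilde{\temp} \equiv 0$ pointwise.

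For the \textbf{time derivative} I would proceed by a direct computation that parallels the one in Lemma~\ref{lm:2}, with $\ln(1+y)$ replaced by the power $(1+y)^m$. Introduce $\sigma =_{\bydefinition} 1 + \widetilde{\temp}/\widehat{\temp}$, so that $\left. \sigma \right|_{\partial \Omega} = 1$. The kinetic piece contributes $-\int_{\Omega} \tensordot{2\mu \widetilde{\gradsym}}{\widetilde{\gradsym}}\,\cvolumee$ via the standard testing in~\eqref{eq:104}. Since $\widehat{\temp}$ is time-independent, differentiating the temperature piece of~\eqref{eq:136} yields
\[
\dd{}{t} \int_{\Omega} \rho \cheatvolref \widehat{\temp}\, \phi_m\!\left(\frac{\widetilde{\temp}}{\widehat{\temp}}\right) \cvolumee = \int_{\Omega} \rho \cheatvolref \bigl(1 - \sigma^{m-1}\bigr) \partial_t \widetilde{\temp}\,\cvolumee,
\]
into which I would substitute $\rho \cheatvolref \partial_t \widetilde{\temp}$ from~\eqref{eq:35} and split the result into dissipation, diffusion and convection contributions.

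The dissipation contribution $\int (1-\sigma^{m-1})\tensordot{2\mu \widetilde{\gradsym}}{\widetilde{\gradsym}}$ combines with the kinetic term to give precisely $-\int \tensordot{2\mu \widetilde{\gradsym}}{\widetilde{\gradsym}}/\sigma^{1-m}$, which is the second term of~\eqref{eq:137}. The diffusion contribution $\int (1-\sigma^{m-1}) \nabla \cdot (\kapparef \nabla \widetilde{\temp})$ is integrated by parts (the boundary integral vanishes because $1-\sigma^{m-1} = 0$ on $\partial \Omega$); decomposing $\nabla \widetilde{\temp} = \widehat{\temp}\nabla \sigma + (\sigma-1)\nabla \widehat{\temp}$ produces a principal piece $-(1-m)\int \kapparef \widehat{\temp}\, \sigma^{m-2}|\nabla \sigma|^2$, which turns into the first term of~\eqref{eq:137} via the pointwise identity $\sigma^{m-2}|\nabla \sigma|^2 = \frac{4}{m^2}|\nabla \sigma^{m/2}|^2$. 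For the convection piece I would split $\nabla(\widehat{\temp}+\widetilde{\temp}) = \widehat{\temp}\nabla \sigma + \sigma \nabla \widehat{\temp}$ and apply the identity $\int \widehat{\temp}\, \widetilde{\vec{v}} \cdot \nabla F = -\int F\, \widetilde{\vec{v}} \cdot \nabla \widehat{\temp}$ (valid because $\widetilde{\vec{v}}$ is divergence-free with zero boundary data) together with $\int \widetilde{\vec{v}} \cdot \nabla \widehat{\temp} = 0$ to collapse everything to $-\rho \cheatvolref \frac{1-m}{m}\int (\sigma^m-1)\, \widetilde{\vec{v}} \cdot \nabla \widehat{\temp}$, which matches the third term of~\eqref{eq:137}.

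The \textbf{main obstacle} is a residual cross-term $-(1-m)\int \kapparef\, \sigma^{m-2}(\sigma-1)\nabla \sigma \cdot \nabla \widehat{\temp}$ produced by the decomposition of $\nabla \widetilde{\temp}$ in the diffusion step, which must vanish for the clean formula to close. I would recognise $\sigma^{m-2}(\sigma-1)\nabla \sigma = \nabla G$ with $G = \sigma^m/m - \sigma^{m-1}/(m-1)$ and integrate by parts once more: the bulk piece contains $\nabla \cdot (\kapparef \nabla \widehat{\temp})$ and disappears by the steady heat equation, while the boundary piece factors as $\left.G\right|_{\partial \Omega} \int_{\partial \Omega} \kapparef \nabla \widehat{\temp} \cdot \vec{n}$ (since $\sigma = 1$ on $\partial \Omega$ forces $\left.G\right|_{\partial \Omega}$ to be a constant), and this remaining integral is itself zero by the divergence theorem applied once again to $\nabla \cdot (\kapparef \nabla \widehat{\temp}) = 0$. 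This twofold exploitation of the steady heat equation, first in the interior and then on the boundary through the divergence theorem, is the delicate step that makes the clean form of~\eqref{eq:137} emerge.
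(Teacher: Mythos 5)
Your proposal is correct and follows essentially the same route as the paper: the paper proves \eqref{eq:137} in Appendix~\ref{sec:altern-deriv-form} via the pointwise evolution identity of Lemma~\ref{lm:8} for $\cheatvolref \widehat{\temp} f\left(\exponential{\relentropy/\cheatvolref}\right)$ with $f(y)=y-\tfrac{1}{m}(y^m-1)-1$, which is your computation written in terms of the relative entropy $\relentropy$ instead of $\sigma=1+\widetilde{\temp}/\widehat{\temp}$, and your treatment of the residual cross term via $\nabla G$ together with $\divergence(\kapparef\nabla\widehat{\temp})=0$ is exactly the paper's identity \eqref{eq:317}. The only cosmetic difference is that the paper's choice of primitive vanishes on the boundary (since $f(1)=f'(1)=0$), whereas your $G$ equals a nonzero constant there, which you correctly dispose of by one more application of the divergence theorem.
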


\begin{lemma}[On boundedness of some integrals]
  \label{lm:5}
  Let us consider the velocity/temperature perturbation $\widetilde{\temp}$, $\widetilde{\vec{v}}$ governed by equations~\eqref{eq:governing-equations-perturbation}, and let $m \in (0,1)$, then
\begin{subequations}
  \begin{align}
      \label{eq:138}
    \int_{\tau=0}^{+\infty}
    \left(
        \int_{\Omega}
    \frac{
      2 \mu \tensordot{\widetilde{\gradsym}}{\widetilde{\gradsym}}
    }
    {
      \left(
        1
        +
        \frac{\widetilde{\temp}}{\widehat{\temp}}
      \right)^{1-m}
    }
    \,
    \cvolumee
    \right)
    \,
    \diff \tau
    &<
    +
      \infty
      ,
      \\
      \label{eq:139}
      \int_{\tau=0}^{+\infty}
    \left(
        \int_{\Omega}
    4
    \frac{1-m}{m^2}
    \kapparef
    \widehat{\temp}
    \vectordot{
      \nabla
      \left[
        \left(
          1
          +
          \frac{\widetilde{\temp}}{\widehat{\temp}}
        \right)^{\frac{m}{2}}
        -
        1
      \right]
    }
    {
      \nabla
      \left[
        \left(
          1
          +
          \frac{\widetilde{\temp}}{\widehat{\temp}}
        \right)^{\frac{m}{2}}
        -
        1
      \right]
    }
    \,
    \cvolumee
  \right)
  \,
  \diff \tau
  &<
    + \infty
    ,
      \\
      \label{eq:140}
        \int_{\tau=0}^{+\infty}
    \absnorm{
        \int_{\Omega}
    \rho
    \cheatvolref
    \left(
      \vectordot{\nabla \widehat{\temp}}{\widetilde{\vec{v}}}
    \right)
    \frac{1-m}{m}
    \left[
      \left(
        1
        +
        \frac{\widetilde{\temp}}{\widehat{\temp}}
      \right)^m
      -
      1
    \right]
  \,
  \cvolumee
  }
  \,
  \diff \tau
  &<
      + \infty
      .
    \end{align}
  \end{subequations}
\end{lemma}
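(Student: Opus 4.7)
The plan is to follow the template of the proof of Lemma~\ref{lm:3} from Section~\ref{sec:bound-some-integr}, suitably adapted to the family of functionals~\eqref{eq:136}. First I would integrate the explicit formula~\eqref{eq:137} for $\dd{}{t}\mathcal{V}_{\mathrm{meq}}^{\vartemp,\, m}$ over $[0,t]$, estimate the indefinite last term on the right-hand side so that part of it can be absorbed into the negative heat-dissipation term, and then use the non-negativity of $\mathcal{V}_{\mathrm{meq}}^{\vartemp,\, m}$ together with the exponential decay of $\norm[\sleb{2}{\Omega}]{\widetilde{\vec{v}}}^2$ given by~\eqref{eq:108} to conclude~\eqref{eq:138} and~\eqref{eq:139}. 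Once these two estimates are in hand,~\eqref{eq:140} follows by plugging the resulting bound on the convection term back in and integrating over $[0, +\infty)$.

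The new technical ingredient, compared with the proof of Lemma~\ref{lm:3}, is the auxiliary function $\phi =_{\bydefinition} \left(1 + \frac{\widetilde{\temp}}{\widehat{\temp}}\right)^{\frac{m}{2}} - 1$, which vanishes on $\partial \Omega$ in virtue of~\eqref{eq:eq:boundary-conditions-perturbation} and whose gradient, up to constants, is exactly the integrand of the good heat-dissipation term in~\eqref{eq:137}. The key algebraic observation is the identity $\left(1 + \frac{\widetilde{\temp}}{\widehat{\temp}}\right)^m - 1 = \phi^2 + 2\phi$, which decomposes the convection integral into a linear and a quadratic piece in $\phi$. The linear piece is controlled by the $\varepsilon$--Young inequality combined with Poincar\'e's inequality for $\phi \in H^1_0(\Omega)$, producing a contribution that can be absorbed into the good heat term at the cost of a term proportional to $\norm[\sleb{2}{\Omega}]{\widetilde{\vec{v}}}^2$, which is integrable in time by~\eqref{eq:108}. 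The quadratic piece is handled by the Sobolev embedding $\norm[\sleb{4}{\Omega}]{\phi}^2 \leq C \norm[\sleb{2}{\Omega}]{\nabla \phi}^2$, yielding a contribution bounded by $C \norm[\sleb{2}{\Omega}]{\widetilde{\vec{v}}} \, \norm[\sleb{2}{\Omega}]{\nabla \phi}^2$.

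The hard part is the absorption of this quadratic piece, since the coefficient $\norm[\sleb{2}{\Omega}]{\widetilde{\vec{v}}}$ is not uniformly small in time and a direct Young absorption is not available on the whole half line. I would resolve this by splitting the time axis at a threshold $T^{\star}$, chosen using~\eqref{eq:108} so that $\norm[\sleb{2}{\Omega}]{\widetilde{\vec{v}}(t)}$ is small enough on $[T^{\star}, + \infty)$ to make the coefficient in front of $\norm[\sleb{2}{\Omega}]{\nabla \phi}^2$ strictly smaller than the positive coefficient coming from the good heat term in~\eqref{eq:137}, while on the compact interval $[0, T^{\star}]$ every quantity involved is bounded thanks to the assumed classical regularity of the solution. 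Combining the two time regimes and invoking $\mathcal{V}_{\mathrm{meq}}^{\vartemp,\, m} \geq 0$ then yields~\eqref{eq:138} and~\eqref{eq:139}, and feeding the convection bound back into the estimate gives~\eqref{eq:140}.
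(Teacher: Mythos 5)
Your proposal is correct and follows essentially the same route as the paper: the auxiliary function $\phi$ is exactly the paper's $d_{\mathrm{th},\,m}$, the identity $\left(1+\frac{\widetilde{\temp}}{\widehat{\temp}}\right)^m-1=\phi^2+2\phi$, the $\varepsilon$--Young treatment of the linear piece, the Sobolev--Poincar\'e control of the quadratic piece by $C\,\norm[\sleb{2}{\Omega}]{\widetilde{\vec{v}}}\,\norm[\sleb{2}{\Omega}]{\nabla\phi}^2$, and the observation that the resulting coefficient in front of $\norm[\sleb{2}{\Omega}]{\nabla\phi}^2$ becomes positive past a finite time threshold are all present in Section~\ref{sec:estim-time-deriv-1}. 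The only cosmetic difference is your choice of H\"older exponents for the quadratic piece (an $L^4$--$L^2$ pairing rather than the paper's $L^6$--$L^{3/2}$ pairing), which leads to the same bound.
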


\subsection{Description based on an alternative temperature scale}
\label{sec:descr-based-modif}

So far we have used the absolute temperature scale $\temp$, and we have found that the corresponding natural Lyapunov type functional is almost the right one. Now we can speculate about the following idea. The fact that the perturbed temperature field returns back to the non-equilibrium steady state can not depend on the choice of the temperature scale. The temperature scale is nothing that is given~\emph{a priori}, the absolute temperature scale is used because of its convenience. (See for example~\cite{pippard.ab:elements} for a thorough discussion of the concept of temperature, and the theoretical benefits of the absolute temperature scale. See also~\cite{fosdick.rl.rajagopal.kr:on} for a discussion of fundamentals of thermometry.) In fact any strictly increasing function of the absolute temperature can be used as an alternative temperature scale. This alternative temperature scale will provide us essentially the same tool for \emph{qualitative comparison of hotness} of two bodies in the sense it will preserve ordering with respect to the hotness. On the other hand the \emph{quantitative aspects of the ordering} will differ, which might help us to overcome some technical difficulties in dealing with the governing equations. This idea motivates the following manipulation.

The non-equilibrium steady temperature field is given by the solution of steady heat equation $\divergence \left( \kapparef \nabla \temp \right) = 0$. If we rewrite the equation as
\begin{equation}
  \label{eq:161}
  \divergence \left( \kapparef \tempref \left(\frac{\temp}{\tempref}\right)^m \frac{1}{\left(\frac{\temp}{\tempref}\right)^m} \nabla \frac{\temp}{\tempref} \right) = 0,
\end{equation}
then nothing changes. Now we can introduce a new temperature scale $\vartemp$ as
\begin{equation}
  \label{eq:162}
  \frac{\vartemp}{\vartempref} =_{\bydefinition} \left( \frac{\temp}{\tempref} \right)^{1-m},
\end{equation}
where $m < 1$ is an arbitrary number. (The restriction to $m<1$ means that the new temperature scale preserves the ordering according to hotness. Values $m>1$ would lead to the inversion of ordering, since the new temperature would measure coldness instead of hotness, and the heat flux would change its sign.) Using this newly defined temperature scale, we see that~\eqref{eq:161} reads
\begin{equation}
  \label{eq:163}
  \divergence
  \left(
    \kapparef \frac{1}{1-m} \frac{\tempref}{\vartempref} \left( \frac{\vartemp}{\vartempref} \right)^{\frac{m}{1-m}} \nabla \vartemp 
  \right)
  =
  0
  .
\end{equation}
Consequently, if we decide to use the alternative temperature scale~\eqref{eq:162}, then the fluid of interest effectively behaves as a fluid with the \emph{temperature dependent thermal conductivity}
\begin{equation}
  \label{eq:164}
  \kappa(\vartemp) = \kapparef \frac{1}{1-m} \frac{\tempref}{\vartempref} \left( \frac{\vartemp}{\vartempref} \right)^{\frac{m}{1-m}},
\end{equation}
and the equation for the non-equilibrium steady state reads
$
\divergence \left( \kappa(\vartemp) \nabla \vartemp  \right) = 0.
$
If we make the same substitution in the complete evolution equation for the temperature~\eqref{eq:24}, then we see that~\eqref{eq:24} reads
\begin{equation}
  \label{eq:166}
  \rho \cheatvolref \frac{1}{1-m} \frac{\tempref}{\vartempref} \left( \frac{\vartemp}{\vartempref} \right)^{\frac{m}{1-m}} \dd{\vartemp}{t} = \divergence \left( \kappa(\vartemp) \nabla \vartemp \right) + 2 \mu \tensordot{\gradsym}{\gradsym}.
\end{equation}
Consequently, the fluid of interest effectively behaves as a fluid with the \emph{temperature dependent specific heat at constant volume}
\begin{equation}
  \label{eq:167}
  \cheatvol(\vartemp)
  =
  \cheatvolref \frac{1}{1-m} \frac{\tempref}{\vartempref} \left( \frac{\vartemp}{\vartempref} \right)^{\frac{m}{1-m}},
\end{equation}
and the evolution equation for the newly defined temperature reads
\begin{equation}
  \label{eq:168}
  \rho \cheatvol(\vartemp) \dd{\vartemp}{t} = \divergence \left( \kappa(\vartemp) \nabla \vartemp \right) + 2 \mu \tensordot{\gradsym}{\gradsym}
  .
\end{equation}

\emph{This means that if we use the alternative temperature scale then we are effectively dealing with a fluid with the temperature dependent heat conductivity and the temperature dependent specific heat capacity at constant volume.} Concerning this system of governing equations, we know how to construct a candidate for the Lyapunov functional. All that needs to be done is to identify the specific Helmholtz free energy, and use the method proposed by~\cite{bulcek.m.malek.j.ea:thermodynamics}. The specific Helmholtz free energy is a solution to the equation
\begin{equation}
  \label{eq:173}
\cheatvol(\vartemp) = -\vartemp \ppd{\fenergy}{\vartemp},
\end{equation}
which yields
$
\fenergy
  =
  -
  \cheatvolref
  \vartempref
  \frac{\tempref}{\vartempref}
  \frac{1}{m}
  \left[
    (1-m)
    \left(
      \frac{\vartemp}{\vartempref}
    \right)^{\frac{1}{1-m}}
    -
    \frac{\vartemp}{\vartempref}
  \right]
$  .
(Note that this formula gives us an additional restriction $m>0$. One of the integration constants in~\eqref{eq:173} has been chosen in such a way that the entropy vanishes at $\vartemp = \vartempref$. The other integration constant is immaterial since we are finally interested only in the differences between the corresponding quantities.) The other quantities such as the entropy and the internal energy are then found using standard thermodynamical identities. This yields
\begin{equation}
  \entropy
  =
    \cheatvolref
    \frac{\tempref}{\vartempref}
    \frac{1}{m}
    \left[
    \left(
    \frac{\vartemp}{\vartempref}
    \right)^{\frac{m}{1-m}}
    -
    1
    \right]
    ,
  \qquad
  \ienergy
  =
    \cheatvolref
    \vartempref
    \frac{\tempref}{\vartempref}
    \left(
    \frac{\vartemp}{\vartempref}
    \right)^{\frac{1}{1-m}}
    .
\end{equation}

The formula for the candidate for a Lyapunov functional which is obtained by the same method as in Section~\ref{sec:lyap-like-funct} reads
\begin{multline}
  \label{eq:177}
  \mathcal{V}_{\mathrm{meq}}^{\vartemp,\, m}
  \left(
    \left.
      \widetilde{\vec{W}}
    \right\|
    \widehat{\vec{W}}
  \right)
  =
    \int_{\Omega}
  \frac{1}{2} \rho \absnorm{\widetilde{\vec{v}}}^2
  \,
  \cvolumee
  \\
  +
  \int_{\Omega}
  \rho
  \cheatvolref
  \frac{\tempref}{\vartempref} \left( \frac{\widehat{\vartemp}}{\vartempref} \right)^{\frac{1}{1-m}}
  \vartempref
  \left[
    \left(
      \left(
        1 + \frac{\widetilde{\vartemp}}{\widehat{\vartemp}}
      \right)^{\frac{1}{1-m}}
      -
      1
    \right)
    -
    \frac{1}{m}
    \left(
      \left(
        1 + \frac{\widetilde{\vartemp}}{\widehat{\vartemp}}
      \right)^{\frac{m}{1-m}}
      -
      1
    \right)
  \right]
  \,
  \cvolumee
  .
\end{multline}
In virtue of~\eqref{eq:162} we also see that
$
1
    +
    \frac{\widetilde{\vartemp}}{\widehat{\vartemp}}
    =
      \left(
      1
      +
      \frac{\widetilde{\temp}}{\widehat{\temp}}
      \right)^{1-m}
$,
hence the functional in~\eqref{eq:177} can be rewritten in terms of the original absolute temperature~$\temp$ as
\begin{equation}
  \label{eq:190}
  \mathcal{V}_{\mathrm{meq}}^{\vartemp,\, m}
  \left(
    \left.
      \widetilde{\vec{W}}
    \right\|
    \widehat{\vec{W}}
  \right)
  =
  \int_{\Omega}
  \rho
  \cheatvolref
  \widehat{\temp}
  \left[
    \frac{\widetilde{\temp}}{\widehat{\temp}}
    -
    \frac{1}{m}
    \left(
      \left(
        1 + \frac{\widetilde{\temp}}{\widehat{\temp}}
      \right)^{m}
      -
      1
    \right)
  \right]
  \,
  \cvolumee
  +
  \int_{\Omega}
  \frac{1}{2} \rho \absnorm{\widetilde{\vec{v}}}^2
  \,
  \cvolumee
  .
\end{equation}
Furthermore, the formula for the time derivative $\mathcal{V}_{\mathrm{meq}}^{\vartemp,\, m}$ rewritten again in the terms of the original absolute temperature $\temp$ is the formula~\eqref{eq:137} reported in Lemma~\ref{lm:4}. The proof of formula~\eqref{eq:137} is given in Appendix~\ref{sec:altern-deriv-form}.

\subsection{Estimate of the time derivative of the prospective Lyapunov type functionals and boundedness of some integrals}
\label{sec:estim-time-deriv-1}

Using the whole family of functionals $\mathcal{V}_{\mathrm{meq}}^{\vartemp,\, m}$ we need to revisit the estimates established in Section~\ref{sec:lyap-like-funct}. The objective is to get the same type of estimates as in Section~\ref{sec:estim-time-deriv} and Section~\ref{sec:bound-some-integr}. Namely we will be first dealing with the last term on the right-hand side of~\eqref{eq:137}, and we will try to show that it has a bounded time integral. Let us denote
\begin{equation}
  \label{eq:191}
    d_{\mathrm{th},\, m}
  =_{\bydefinition}
  \left(
    1
    +
    \frac{\widetilde{\temp}}{\widehat{\temp}}
  \right)
  ^{\frac{m}{2}}
  -
  1
  .
\end{equation}
First, we note that in virtue of the boundary condition~\eqref{eq:eq:boundary-conditions-perturbation}, the function $d_{\mathrm{th}, \mathrm{m}}$ vanishes on the boundary, $\left. d_{\mathrm{th},\, m} \right|_{\partial \Omega} = 0$. Furthermore, we see that
$
\left(
    1
    +
    \frac{\widetilde{\temp}}{\widehat{\temp}}
  \right)^m
  -
  1
  =
    d_{\mathrm{th},\, m}^2
  +
  2  d_{\mathrm{th},\, m}
$.
Using the just introduced notation, we can rewrite~\eqref{eq:137} as
\begin{multline}
  \label{eq:193}
  \dd{}{t}
  \mathcal{V}_{\mathrm{meq}}^{\vartemp,\, m}
  \left(
    \left.
      \widetilde{\vec{W}}
    \right\|
    \widehat{\vec{W}}
  \right)
  =
  -
  4
  \frac{1-m}{m^2}
  \kapparef
  \int_{\Omega}
  \widehat{\temp}
  \absnorm
  {
    \nabla
      d_{\mathrm{th},\, m}
  }^2
  \,
  \cvolumee
  -
  \int_{\Omega}
  \frac{
    2 \mu \tensordot{\widetilde{\gradsym}}{\widetilde{\gradsym}}
  }
  {
    \left(
      1
      +
      \frac{\widetilde{\temp}}{\widehat{\temp}}
    \right)^{1-m}
  }
  \,
  \cvolumee
  \\
  -
  \frac{1-m}{m}
  \int_{\Omega}
  \rho
  \cheatvolref
  \left(
    \vectordot
    {\widetilde{\vec{v}}}
    {
      \nabla
      \widehat{\temp}    
    }
  \right)
  \left(
      d_{\mathrm{th},\, m}^2
    +
    2  d_{\mathrm{th},\, m}
  \right)
  \,
  \cvolumee
  .
\end{multline}
Let us see whether the last term without the sign can be ``absorbed'' in the first term. We see that the $\varepsilon$-Young inequality implies
\begin{equation}
  \label{eq:194}
  \absnorm{
    \int_{\Omega}
    \left(
      \vectordot
      {
        \widetilde{\vec{v}}
      }
      {
        \nabla \widehat{\temp}
      }
    \right)
      d_{\mathrm{th},\, m}
    \,
    \cvolumee
  }
  \leq
  \varepsilon
  \max_{\vec{x} \in \Omega}
  \absnorm{\nabla \widehat{\temp}}
  \norm[\sleb{2}{\Omega}]{  d_{\mathrm{th},\, m}}^2
  +
  \frac{1}{4\varepsilon}
  \max_{\vec{x} \in \Omega}
  \absnorm{\nabla \widehat{\temp}}
  \left. \norm[\sleb{2}{\Omega}]{\widetilde{\vecv}}^2 \right|_{t=0} \exponential{- \frac{2 \mu}{C_P \rho}t}
  ,
\end{equation}
where we have used the decay of the velocity perturbation~\eqref{eq:108}. Furthermore, we see that
\begin{equation}
  \label{eq:195}
  \absnorm{
    \int_{\Omega}
    \left(
      \vectordot
      {
        \widetilde{\vec{v}}
      }
      {
        \nabla \widehat{\temp}
      }
    \right)
      d_{\mathrm{th},\, m}^2
    \,
    \cvolumee
  }
  \leq
  \max_{\vec{x} \in \Omega} \absnorm{\nabla \widehat{\temp}}
  \int_{\Omega}
    d_{\mathrm{th},\, m}^2
  \absnorm{
    \widetilde{\vec{v}}
  }
  \,
  \cvolumee
  ,
\end{equation}
and we can focus on the term
$
\int_{\Omega}
  d_{\mathrm{th},\, m}^2
\absnorm{
  \widetilde{\vec{v}}
}
\,
\cvolumee
$
only.

Before we proceed with the estimates, we derive a simple inequality that will be useful in a moment. From the Sobolev embedding, see Lemma~\ref{lm:sobolev-embedding-continuous}, and the Poincar\'e inequality, see Lemma~\ref{lm:poincare}, we know that
$
\norm[\sleb{2s}{\Omega}]{d_{\mathrm{th},\, m}}
\leq
C_{\mathrm{S}} \norm[\ssob{1}{2}{\Omega}]{d_{\mathrm{th},\, m}}
\leq
C_{\mathrm{S}}
\sqrt{
  1
  +
  C_{\mathrm{P}}
}
\norm[\sleb{2}{\Omega}]{\nabla d_{\mathrm{th},\, m}}
$,
where $2s \in [1, p^\star]$. (Recall that in the three dimensional case we have $p^{\star}= 6$.) In particular, we see that for all $s \in \left[1, \frac{p^\star}{2} \right]$ we get
\begin{equation}
  \label{eq:197}
  \left(
    \int_{\Omega}
      d_{\mathrm{th},\, m}^{2s}
    \,
    \cvolumee
  \right)^{\frac{1}{s}}
  \leq
  C_{\mathrm{S}}^2
  \left(
    1
    +
    C_{\mathrm{P}}
  \right)
  \int_{\Omega}
  \absnorm{
    \nabla   d_{\mathrm{th},\, m}
  }
  ^2
  \,
  \cvolumee
  .
\end{equation}

Going back to the right-hand side of~\eqref{eq:195}, and using the H\"older inequality with $p_1=3$ and $p_2 = \frac{3}{2}$ we see that
\begin{equation}
  \label{eq:199}
  \int_{\Omega}
    d_{\mathrm{th},\, m}^2
  \absnorm{
    \widetilde{\vec{v}}
  }
  \,
  \cvolumee
  \leq
  \left(
    \int_{\Omega}
      d_{\mathrm{th},\, m}^{6}
    \,
    \cvolumee
  \right)^{\frac{1}{3}}
  \left(
    \int_{\Omega}
    \absnorm{
      \widetilde{\vec{v}}
    }^{\frac{3}{2}}
    \,
    \cvolumee
  \right)^{\frac{2}{3}}
  .
\end{equation}
The first integral on the right-hand side of~\eqref{eq:199} can be estimated using~\eqref{eq:197} where we choose $s=3$. The second integral can be estimated using the trivial embedding of Lebesgue spaces, see Lemma~\ref{lm:trivial-embedding}, in particular for $p_1 = \frac{3}{2}$ and $p_2 = 2$ we get
$
\norm[\sleb{\frac{3}{2}}{\Omega}]{\widetilde{\vec{v}}}
\leq
\absnorm{\Omega}^{\frac{1}{6}}
\norm[\sleb{2}{\Omega}]{\widetilde{\vec{v}}}
$.
Finally, we see that
\begin{equation}
  \label{eq:201}
  \absnorm{
    \int_{\Omega}
    \left(
      \vectordot
      {
        \widetilde{\vec{v}}
      }
      {
        \nabla \widehat{\temp}
      }
    \right)
      d_{\mathrm{th},\, m}^2
    \,
    \cvolumee
  }
  \leq
  =
  \max_{\vec{x} \in \Omega} \absnorm{\nabla \widehat{\temp}}
  C_{\mathrm{S}}^2
  \left(
    1
    +
    C_{\mathrm{P}}
  \right)
  \absnorm{\Omega}^{\frac{1}{6}}
  \left. \norm[\sleb{2}{\Omega}]{\widetilde{\vecv}} \right|_{t=0} \exponential{- \frac{\mu}{C_P \rho}t}
  \norm[\sleb{2}{\Omega}]{\nabla   d_{\mathrm{th},\, m}}^2.
\end{equation}
where we have again used the decay of the velocity perturbation~\eqref{eq:108}.

Using the derived estimates~\eqref{eq:201} and~\eqref{eq:194} we revisit the expression for the time derivative of the proposed Lyapunov type functional~\eqref{eq:193}, and we see that the right-hand side of~\eqref{eq:193} can be estimated as follows
\begin{multline}
  \label{eq:202}
  \dd{}{t}
  \mathcal{V}_{\mathrm{meq}}^{\vartemp,\, m}
  \left(
    \left.
      \widetilde{\vec{W}}
    \right\|
    \widehat{\vec{W}}
  \right)
  \leq
  -
  2
  \frac{1-m}{m^2}
  \kapparef
  \min_{\vec{x} \in \Omega} \widehat{\temp}
  \int_{\Omega}
  \absnorm
  {
    \nabla
      d_{\mathrm{th},\, m}
  }^2
  \,
  \cvolumee
  \\
  -
  2
  \frac{1-m}{m^2}
  \kapparef
  \min_{\vec{x} \in \Omega} \widehat{\temp}
  \frac{1}{C_P}
  \int_{\Omega}
  \absnorm
  {
      d_{\mathrm{th},\, m}
  }^2
  \,
  \cvolumee
  -
  \int_{\Omega}
  \frac{
    2 \mu \tensordot{\widetilde{\gradsym}}{\widetilde{\gradsym}}
  }
  {
    \left(
      1
      +
      \frac{\widetilde{\temp}}{\widehat{\temp}}
    \right)^{1-m}
  }
  \,
  \cvolumee
  \\
  +
  2
  \frac{1-m}{m}
  \rho
  \cheatvolref
  \varepsilon
  \max_{\vec{x} \in \Omega}
  \absnorm{\nabla \widehat{\temp}}
  \norm[\sleb{2}{\Omega}]{  d_{\mathrm{th},\, m}}^2
  +
  \frac{1-m}{m}
  \rho
  \cheatvolref
  \frac{1}{2\varepsilon}
  \max_{\vec{x} \in \Omega}
  \absnorm{\nabla \widehat{\temp}}
  \left. \norm[\sleb{2}{\Omega}]{\widetilde{\vecv}}^2 \right|_{t=0} \exponential{- \frac{2 \mu}{C_P \rho}t}
  \\
  +
  \frac{1-m}{m}
  \rho
  \cheatvolref
  \max_{\vec{x} \in \Omega} \absnorm{\nabla \widehat{\temp}}
  C_{\mathrm{S}}^2
  \left(
    1
    +
    C_{\mathrm{P}}
  \right)
  \absnorm{\Omega}^{\frac{1}{6}}
  \left. \norm[\sleb{2}{\Omega}]{\widetilde{\vecv}} \right|_{t=0} \exponential{- \frac{\mu}{C_P \rho}t}
  \norm[\sleb{2}{\Omega}]{\nabla   d_{\mathrm{th},\, m}}^2
  ,
\end{multline}
where we have split the first gradient term on the right-hand side of~\eqref{eq:193} to halves, and where we have used Poincar\'e inequality. Inequality~\eqref{eq:202} can be further rewritten as
\begin{subequations}
\label{eq:203}  
\begin{multline}
  \label{eq:204}
  \dd{}{t}
  \mathcal{V}_{\mathrm{meq}}^{\vartemp,\, m}
  \left(
    \left.
      \widetilde{\vec{W}}
    \right\|
    \widehat{\vec{W}}
  \right)
  \leq
  -
  C_{\nabla   d_{\mathrm{th},\, m}}
  \norm[\sleb{2}{\Omega}]{\nabla   d_{\mathrm{th},\, m}}^2
  -
  C_{d_{\mathrm{th},\, m}}
  \norm[\sleb{2}{\Omega}]{  d_{\mathrm{th},\, m}}^2
  -
  \int_{\Omega}
  \frac{
    2 \mu \tensordot{\widetilde{\gradsym}}{\widetilde{\gradsym}}
  }
  {
    \left(
      1
      +
      \frac{\widetilde{\temp}}{\widehat{\temp}}
    \right)^{1-m}
  }
  \,
  \cvolumee
  \\
  +
    \frac{1-m}{m}
  \rho
  \cheatvolref
  \frac{1}{2\varepsilon}
  \max_{\vec{x} \in \Omega}
  \absnorm{\nabla \widehat{\temp}}
  \left. \norm[\sleb{2}{\Omega}]{\widetilde{\vecv}}^2 \right|_{t=0} \exponential{- \frac{2 \mu}{C_P \rho}t}
  ,
\end{multline}
where
\begin{align}
  \label{eq:205}
  C_{\nabla   d_{\mathrm{th},\, m}}
  &=_{\bydefinition}
    2
    \frac{1-m}{m^2}
    \kapparef
    \min_{\vec{x} \in \Omega} \widehat{\temp}
    -
    \frac{1-m}{m}
    \rho
    \cheatvolref
    \max_{\vec{x} \in \Omega} \absnorm{\nabla \widehat{\temp}}
    C_{\mathrm{S}}^2
    \left(
    1
    +
    C_{\mathrm{P}}
    \right)
    \absnorm{\Omega}^{\frac{1}{6}}
    \left. \norm[\sleb{2}{\Omega}]{\widetilde{\vecv}} \right|_{t=0} \exponential{- \frac{\mu}{C_P \rho}t}
    ,
  \\
  \label{eq:206}
  C_{  d_{\mathrm{th},\, m}}
  &=_{\bydefinition}
    2
    \frac{1-m}{m^2}
    \frac{\kapparef}{C_P}  
    \min_{\vec{x} \in \Omega} \widehat{\temp}
    -
    2
    \frac{1-m}{m}
    \rho
    \cheatvolref
    \varepsilon
    \max_{\vec{x} \in \Omega}
    \absnorm{\nabla \widehat{\temp}}
    .
\end{align}
\end{subequations}
First we note that both the coefficients $C_{\nabla   d_{\mathrm{th},\, m}}$ and $C_{  d_{\mathrm{th},\, m}}$ are positive in the case of spatially homogeneous temperature distribution~$\widehat{\temp}$, and that the last term on the right-hand side vanishes in this case as well. Consequently, $\mathcal{V}_{\mathrm{meq}}^{\vartemp,\, m}$ is a genuine Lyapunov type functional if we are interested in the steady state with spatially homogeneous temperature distribution~$\widehat{\temp}$.

Moreover, if $\varepsilon$ in~\eqref{eq:194} is chosen sufficiently small we see that $C_{  d_{\mathrm{th},\, m}}$ is positive even if the steady state temperature distribution $\widehat{\temp}$ is spatially inhomogeneous. Furthermore, the negative part of the coefficient $C_{\nabla   d_{\mathrm{th},\, m}}$ is a decreasing function of time, while its positive part remains constant. The coefficient $C_{\nabla   d_{\mathrm{th},\, m}}$ therefore becomes positive once we reach a (possibly large) time threshold, and then it remains positive.

Finally, the last term in~\eqref{eq:204} is integrable with respect to time from zero to infinity. Having derived~\eqref{eq:204} we are in fact in the same position as in Section~\ref{sec:lyap-like-funct}, see especially formula \eqref{eq:111}. In particular, we can follow the manipulations in Section~\ref{sec:bound-some-integr} and show that all the terms on the right-hand side of \eqref{eq:137} are finite, which proves Lemma~\ref{lm:5}.

\section{Application of the lemma on the decay of integrable functions}
\label{sec:appl-lemma-decay}

From Section~\ref{sec:lyap-like-funct} and Section~\ref{sec:reth-devel-lyap} we know that for $m \in [0,1)$ we have a family of functionals~$\mathcal{V}_{\mathrm{meq}}^{\vartemp,\, m}$ that have favourable properties. (The results obtained in Section~\ref{sec:lyap-like-funct} can be seen as a formal limit $m \to 0+$ of the results obtained in Section~\ref{sec:reth-devel-lyap}.) The functionals are non-negative for any perturbation, and they vanish if and only if the perturbation vanishes in the whole domain of interest. Furthermore, the key terms in the formulae for the time derivatives are negative. Unfortunately, none of these functionals alone are sufficient for establishing stability, namely none of them are suitable as a Lyapunov functional. However, \emph{we might still benefit from the fact that we have the whole family of functionals, and we can try to combine them in a suitable manner}.

In order to proceed with this idea further, we need two simple lemmas concerning the behaviour of certain functions, see Lemma~\ref{lm:6} and Lemma~\ref{lm:7} in Appendix~\ref{sec:auxiliary-tools-1}. Note that Lemma~\ref{lm:7} implies that for $m,n \in (0,1)$, $n>m> \frac{n}{2}$ and $x \in (-1, \infty)$ we have the inequality
\begin{equation}
  \label{eq:306}
    -
  \left(
    \left[
      \left(
        1
        +
        x
      \right)^{\frac{m}{2}}
      -
      1
    \right]^2
    +
    \left[
      \left(
        1
        +
        x
      \right)^{\frac{n}{2}}
      -
      1
    \right]^2
  \right)
  \leq
  -
  n
  \left[
    \frac{1}{n}
    \left(
      1 + x
    \right)^{n}
    -
    \frac{1}{m}
    \left(
      1 + x
    \right)^{m}
    +
    \frac{n-m}{mn}
  \right]
  .
\end{equation}
See also Figure~\ref{fig:lyapunov-sketch-b} for a sketch of the graphs of the corresponding functions.

Let us consider the family of functionals $\mathcal{V}_{\mathrm{meq}}^{\vartemp,\, m}$ introduced in Lemma~\ref{lm:4}, that is
\begin{subequations}
  \label{eq:217}
  \begin{equation}
    \label{eq:218}
    \mathcal{V}_{\mathrm{meq}}^{\vartemp,\, m}
  \left(
    \left.
      \widetilde{\vec{W}}
    \right\|
    \widehat{\vec{W}}
  \right)
    =
    \int_{\Omega}
    \rho
    \cheatvolref
    \widehat{\temp}
     v_{\mathrm{th},\, m}
    \,
    \cvolumee
    +
    \int_{\Omega}
    \frac{1}{2} \rho \absnorm{\widetilde{\vec{v}}}^2
    \,
    \cvolumee
    ,
  \end{equation}
  where we have introduced the symbol $v_{\mathrm{th},\, m}$, 
  \begin{equation}
    \label{eq:219}
    v_{\mathrm{th},\, m}
    =_{\bydefinition}
    \frac{\widetilde{\temp}}{\widehat{\temp}}
    -
    \frac{1}{m}
    \left(
      \left(
        1 + \frac{\widetilde{\temp}}{\widehat{\temp}}
      \right)^{m}
      -
      1
    \right)
  ,
\end{equation}
for the integrand in thermal part of~$\mathcal{V}_{\mathrm{meq}}^{\vartemp,\, m}$, and where $m \in (0,1)$.
\end{subequations}
We know, see Lemma~\ref{lm:4}, that the evolution equation for this functional reads
\begin{equation}
  \label{eq:220}
  \dd{}{t}
  \mathcal{V}_{\mathrm{meq}}^{\vartemp,\, m}
   \left(
    \left.
      \widetilde{\vec{W}}
    \right\|
    \widehat{\vec{W}}
  \right)
  =
  -
  \int_{\Omega}
  4
  \frac{1-m}{m^2}
  \kapparef
  \widehat{\temp}
  \vectordot{
    \nabla
    d_{\mathrm{th},\, m}
  }
  {
    \nabla
    d_{\mathrm{th},\, m}
  }
  \,
  \cvolumee
  -
  \mathcal{R}^{\vartheta, m}
  ,
\end{equation}
where we have introduced the notation
\begin{subequations}
  \label{eq:13}
  \begin{align}
    \label{eq:12}
    d_{\mathrm{th},\, m}
    &=_{\bydefinition}
      \left(
      1
      +
      \frac{\widetilde{\temp}}{\widehat{\temp}}
      \right)^{\frac{m}{2}}
      -
      1
      ,
    \\
    \label{eq:15}
    \mathcal{R}^{\vartheta, m}
    &=_{\bydefinition}
    \int_{\Omega}
    \frac{
    2 \mu \tensordot{\widetilde{\gradsym}}{\widetilde{\gradsym}}
      }
      {
      \left(
      1
      +
      \frac{\widetilde{\temp}}{\widehat{\temp}}
      \right)^{1-m}
      }
      \,
      \cvolumee
      +
      \int_{\Omega}
      \rho
      \cheatvolref
      \left(
      \vectordot{\nabla \widehat{\temp}}{\widetilde{\vec{v}}}
      \right)
      \frac{1-m}{m}
      \left[
      \left(
      1
      +
      \frac{\widetilde{\temp}}{\widehat{\temp}}
      \right)^m
      -
      1
      \right]
      \,
      \cvolumee
      ,
  \end{align}
\end{subequations}
already known from Section~\ref{sec:estim-time-deriv-1}. From Section~\ref{sec:reth-devel-lyap}, we also know that all the quantities on the right-hand side of~\eqref{eq:220} have finite time integrals, see Lemma~\ref{lm:3} and Lemma~\ref{lm:5}. (In particular all the integrals in the remainder term $\mathcal{R}^{\vartheta, m}$ have finite integral with respect to time.) Now we can subtract the evolution equations for $\mathcal{V}_{\mathrm{meq}}^{\vartemp,\, m}$ and  $\mathcal{V}_{\mathrm{meq}}^{\vartemp, n}$, which yields
\begin{multline}
  \label{eq:226}
  \dd{}{t}
  \left[
    \mathcal{V}_{\mathrm{meq}}^{\vartemp,\, m}
    \left(
      \left.
        \widetilde{\vec{W}}
      \right\|
      \widehat{\vec{W}}
    \right)
    -
    \mathcal{V}_{\mathrm{meq}}^{\vartemp, n}
    \left(
      \left.
        \widetilde{\vec{W}}
      \right\|
      \widehat{\vec{W}}
    \right)
  \right]
  =
  \dd{}{t}
  \int_{\Omega}
  \rho
  \cheatvolref
  \widehat{\temp}
  \left(
    v_{\mathrm{th},\, m}
    -
    v_{\mathrm{th},\, n}
  \right)
  \,
  \cvolumee
  \\
  =
  -
  \int_{\Omega}
  4
  \frac{1-m}{m^2}
  \kapparef
  \widehat{\temp}
  \absnorm{
    \nabla
    d_{\mathrm{th},\, m}
  }^2
  \,
  \cvolumee
  +
  \int_{\Omega}
  4
  \frac{1-n}{n^2}
  \kapparef
  \widehat{\temp}
  \absnorm{
    \nabla
    d_{\mathrm{th},\, n}
  }^2
  \,
  \cvolumee
  -
  \mathcal{R}^{\vartheta, m}
  +
  \mathcal{R}^{\vartheta, n}
  .
\end{multline}
Using the formulae for $v_{\mathrm{th},\, m}$ and $v_{\mathrm{th},\, n}$ we see that
\begin{equation}
  \label{eq:227}
  v_{\mathrm{th},\, m} - v_{\mathrm{th},\, n}
  =
  \frac{1}{n}
  \left(
    1 + \frac{\widetilde{\temp}}{\widehat{\temp}}
  \right)^{n}
  -
  \frac{1}{m}
  \left(
    1 + \frac{\widetilde{\temp}}{\widehat{\temp}}
  \right)^{m}
  +
  \frac{n-m}{mn}
  ,
\end{equation}
which upon using Lemma~\ref{lm:6} implies that if $n>m$ then
\begin{equation}
  \label{eq:228}
  v_{\mathrm{th},\, m} - v_{\mathrm{th},\, n} \geq 0, 
\end{equation}
while the equality holds if and only if $\frac{\widetilde{\temp}}{\widehat{\temp}}$ vanishes. (Let us recall that $\frac{\widetilde{\temp}}{\widehat{\temp}} \in (-1, + \infty)$, see Section~\ref{sec:boundary-conditions}.) Consequently, we see that the difference
\begin{equation}
  \label{eq:229}
  \mathcal{Y}^{m,n}
  \left(
    \left.
      \widetilde{\vec{W}}
    \right\|
    \widehat{\vec{W}}
  \right)
  =_{\bydefinition}
  \mathcal{V}_{\mathrm{meq}}^{\vartemp,\, m}
    \left(
      \left.
        \widetilde{\vec{W}}
      \right\|
      \widehat{\vec{W}}
    \right)
    -
    \mathcal{V}_{\mathrm{meq}}^{\vartemp, n}
    \left(
      \left.
        \widetilde{\vec{W}}
      \right\|
      \widehat{\vec{W}}
    \right)
  \end{equation}
  is for $n,m \in (0,1)$, $n>m$ a \emph{non-negative functional that vanishes if and only it the perturbation $\widetilde{\temp}$ vanishes}.

  Furthermore the right-hand side of~\eqref{eq:226}, which is the time derivative of $ \mathcal{Y}^{m,n}$, can be rewritten as
\begin{multline}
  \label{eq:230}
  \dd{}{t}
  \int_{\Omega}
  \rho
  \cheatvolref
  \widehat{\temp}
  \left(
    v_{\mathrm{th},\, m}
    -
    v_{\mathrm{th},\, n}
  \right)
  \,
  \cvolumee
  =
  -
  \int_{\Omega}
  4
  \frac{1-m}{m^2}
  \kapparef
  \widehat{\temp}
    \left(
    \absnorm{
      \nabla
      d_{\mathrm{th},\, m}
    }^2
    +
    \absnorm{
      \nabla
      d_{\mathrm{th},\, n}
    }^2
  \right)
  \,
  \cvolumee
  \\
  +
  \int_{\Omega}
  4
  \frac{1-m}{m^2}
  \kapparef
  \widehat{\temp}
  \absnorm{
    \nabla
    d_{\mathrm{th},\, n}
  }^2
  \,
  \cvolumee
  +
  \int_{\Omega}
  4
  \frac{1-n}{n^2}
  \kapparef
  \widehat{\temp}
  \absnorm{
    \nabla
    d_{\mathrm{th},\, n}
  }^2
  \,
  \cvolumee
  -
  \mathcal{R}^{\vartheta, m}
  +
  \mathcal{R}^{\vartheta, n}
  ,
\end{multline}
where we have added and subtracted the quantity
$
    \int_{\Omega}
  4
  \frac{1-m}{m^2}
  \kapparef
  \widehat{\temp}
  \absnorm{
    \nabla
    d_{\mathrm{th},\, n}
  }^2
  \,
  \cvolumee
$.
(We again recall that all the terms on the right-hand side have a finite integral if we integrate them from zero to infinity with respect to time.) Concerning the first term on the right-hand side of~\eqref{eq:230}, we can apply Poincar\'e inequality, and we get
\begin{equation}
  \label{eq:232}
  -
  \int_{\Omega}
  4
  \frac{1-m}{m^2}
  \kapparef
  \widehat{\temp}
  \left(
    \absnorm{
      \nabla
      d_{\mathrm{th},\, m}
    }^2
    +
    \absnorm{
      \nabla
      d_{\mathrm{th},\, n}
    }^2
  \right)
  \,
  \cvolumee
  \\
  \leq
  -
  4
  \frac{1-m}{m^2}
  \frac{
    \kapparef    
  }
  {
    C_P
  }
  \min_{\vec{x} \in \Omega} \widehat{\temp}
  \int_{\Omega}
  \left(
    d_{\mathrm{th},\, m}^2
    +
    d_{\mathrm{th},\, n}^2
  \right)
  \,
  \cvolumee
  .
\end{equation}
Now for  $n,m \in (0,1)$, $n>m> \frac{n}{2}$ we have the pointwise inequality
\begin{equation}
  \label{eq:233}
  -
  \left(
    d_{\mathrm{th},\, m}^2
    +
    d_{\mathrm{th},\, n}^2
  \right)
  \leq
  -
  n
  \left(
    v_{\mathrm{th},\, m}
    -
    v_{\mathrm{th},\, n}
  \right)
  .
\end{equation}
This is a straightforward consequence of Lemma~\ref{lm:7}, see~\eqref{eq:306}, and the definitions of $d_{\mathrm{th},\, m}$ and  $v_{\mathrm{th},\, m}$, see~\eqref{eq:12} and~\eqref{eq:219}. Once we have~\eqref{eq:233} we can go back to~\eqref{eq:232}, and we get the estimate
\begin{multline}
  \label{eq:236}
  -
  \int_{\Omega}
  4
  \frac{1-m}{m^2}
  \kapparef
  \widehat{\temp}
  \left(
    \absnorm{
      \nabla
      d_{\mathrm{th},\, m}
    }^2
    +
    \absnorm{
      \nabla
      d_{\mathrm{th},\, n}
    }^2
  \right)
  \,
  \cvolumee
  \\
  \leq
  -
  4
  n
  \frac{1-m}{m^2}
  \frac{\kapparef}{C_P}
  \frac{
    \min_{\vec{x} \in \Omega} \widehat{\temp}
  }
  {
    \max_{\vec{x} \in \Omega} \widehat{\temp}
  }
  \int_{\Omega}
  \widehat{\temp}
  \left(
    v_{\mathrm{th},\, m}
    -
    v_{\mathrm{th},\, n}
  \right)
  \,
  \cvolumee
  .
\end{multline}
Consequently, the inequality~\eqref{eq:230} reads
\begin{multline}
  \label{eq:237}
  \dd{}{t}
  \int_{\Omega}
  \rho
  \cheatvolref
  \widehat{\temp}
  \left(
    v_{\mathrm{th},\, m}
    -
    v_{\mathrm{th},\, n}
  \right)
  \,
  \cvolumee
  \leq
  -
  \frac{4 n \left(1-m \right)\kapparef}{m^2C_P}
  \frac{
    \min_{\vec{x} \in \Omega} \widehat{\temp}
  }
  {
    \max_{\vec{x} \in \Omega} \widehat{\temp}
  }
  \int_{\Omega}
  \widehat{\temp}
  \left(
    v_{\mathrm{th},\, m}
    -
    v_{\mathrm{th},\, n}
  \right)
  \,
  \cvolumee
  \\
  +
  \int_{\Omega}
  4
  \frac{1-m}{m^2}
  \kapparef
  \widehat{\temp}
  \absnorm{
    \nabla
    d_{\mathrm{th},\, n}
  }^2
  \,
  \cvolumee
  +
  \int_{\Omega}
  4
  \frac{1-n}{n^2}
  \kapparef
  \widehat{\temp}
  \absnorm{
    \nabla
    d_{\mathrm{th},\, n}
  }^2
  \,
  \cvolumee
  -
  \mathcal{R}^{\vartheta, m}
  +
  \mathcal{R}^{\vartheta, n}
  .
\end{multline}
This inequality implies that
\begin{subequations}
  \label{eq:238}
\begin{equation}
  \label{eq:239}
  \dd{}{t}
  \mathcal{Y}^{m,n}
  \left(
    \left.
      \widetilde{\vec{W}}
    \right\|
    \widehat{\vec{W}}
  \right)
  \leq
  -
  K^{m,n}
  \mathcal{Y}^{m,n}
  \left(
    \left.
      \widetilde{\vec{W}}
    \right\|
    \widehat{\vec{W}}
  \right)
  +
  \mathcal{H}^{m,n}
  \left(
    \left.
      \widetilde{\vec{W}}
    \right\|
    \widehat{\vec{W}}
  \right)
  ,
\end{equation}
where the positive constant $K^{m,n}$ is given by the formula 
$
K^{m,n}
  =_{\bydefinition}
  \frac{4 n \left(1-m \right)\kapparef}{m^2C_P}
  \frac{
    \min_{\vec{x} \in \Omega} \widehat{\temp}
  }
  {
    \max_{\vec{x} \in \Omega} \widehat{\temp}
  }
$,
and the time dependent \emph{non-negative} functional $\mathcal{H}^{m,n}$ is given by the formula
\begin{multline}
  \label{eq:241}
  \mathcal{H}^{m,n}
  \left(
    \left.
      \widetilde{\vec{W}}
    \right\|
    \widehat{\vec{W}}
  \right)
  =_{\bydefinition}
  \int_{\Omega}
  4
  \frac{1-m}{m^2}
  \kapparef
  \widehat{\temp}
  \absnorm{
    \nabla
    d_{\mathrm{th},\, n}
  }^2
  \,
  \cvolumee
  +
  \int_{\Omega}
  4
  \frac{1-n}{n^2}
  \kapparef
  \widehat{\temp}
  \absnorm{
    \nabla
    d_{\mathrm{th},\, n}
  }^2
  \,
  \cvolumee
  \\
  +
  \int_{\Omega}
  \frac{
      2 \mu \tensordot{\widetilde{\gradsym}}{\widetilde{\gradsym}}
    }
    {
      \left(
        1
        +
        \frac{\widetilde{\temp}}{\widehat{\temp}}
      \right)^{1-m}
    }
  \,
  \cvolumee
  +
  \int_{\Omega}
  \frac{
      2 \mu \tensordot{\widetilde{\gradsym}}{\widetilde{\gradsym}}
    }
    {
      \left(
        1
        +
        \frac{\widetilde{\temp}}{\widehat{\temp}}
      \right)^{1-n}
    }
  \,
  \cvolumee
  \\
  +
  \absnorm{
  \int_{\Omega}
  \rho
  \cheatvolref
  \left(
    \vectordot{\nabla \widehat{\temp}}{\widetilde{\vec{v}}}
  \right)
  \frac{1-m}{m}
  \left[
    \left(
      1
      +
      \frac{\widetilde{\temp}}{\widehat{\temp}}
    \right)^m
    -
    1
  \right]
  \,
  \cvolumee
}
\\
  +
  \absnorm{
  \int_{\Omega}
  \rho
  \cheatvolref
  \left(
    \vectordot{\nabla \widehat{\temp}}{\widetilde{\vec{v}}}
  \right)
  \frac{1-n}{n}
  \left[
    \left(
      1
      +
      \frac{\widetilde{\temp}}{\widehat{\temp}}
    \right)^n
    -
    1
  \right]
  \,
  \cvolumee
}
.
\end{multline}
\end{subequations}
We can observe that in virtue of Lemma~\ref{lm:5} we know that
$
\int_{\tau=0}^{+\infty}
  \mathcal{H}^{m,n}
  \left(
    \left.
      \widetilde{\vec{W}}
    \right\|
    \widehat{\vec{W}}
  \right)
  \,
  \diff \tau
  <
  +
  \infty
$.

Now we are finally in the position to use Lemma~\ref{lm:10} on the decay of integrable functions. First, we integrate~\eqref{eq:239} over the time interval~$(s,t)$, which yields
\begin{multline}
  \label{eq:243}
  \left.
    \mathcal{Y}^{m,n}
    \left(
      \left.
        \widetilde{\vec{W}}
      \right\|
      \widehat{\vec{W}}
    \right)
  \right|_{\tau=t}
  \leq
  \left.
    \mathcal{Y}^{m,n}
    \left(
      \left.
        \widetilde{\vec{W}}
      \right\|
      \widehat{\vec{W}}
    \right)
  \right|_{\tau=s}
  -
  K^{m,n}
  \int_{\tau=s}^{t}
  \mathcal{Y}^{m,n}
  \left(
    \left.
      \widetilde{\vec{W}}
    \right\|
    \widehat{\vec{W}}
  \right)
  \,
  \diff \tau
  \\
  +
  \int_{\tau=s}^{t}
  \mathcal{H}^{m,n}
  \left(
    \left.
      \widetilde{\vec{W}}
    \right\|
    \widehat{\vec{W}}
  \right)
  \,
  \diff \tau
  .
\end{multline}
If we take $s=0$ and $t=+\infty$, we see that the term on the left-hand side is nonegative, while the first and the last term on the right-hand side are finite and positive. Consequently, the negative term $\int_{\tau=0}^{+\infty}
  \mathcal{Y}^{m,n}
  \left(
    \left.
      \widetilde{\vec{W}}
    \right\|
    \widehat{\vec{W}}
  \right)
  \,
  \diff \tau$
  must be bounded 
\begin{equation}
  \label{eq:244}
  \int_{\tau=0}^{+\infty}
  \mathcal{Y}^{m,n}
  \left(
    \left.
      \widetilde{\vec{W}}
    \right\|
    \widehat{\vec{W}}
  \right)
  \,
  \diff \tau
  <
  +\infty
  .
\end{equation}

Once we have established the boundedness of the integral, we can go back to~\eqref{eq:239} and estimate the right-hand side of~\eqref{eq:239} by its norm. This yields
\begin{equation}
  \label{eq:245}
    \dd{}{t}
  \mathcal{Y}^{m,n}
  \left(
    \left.
      \widetilde{\vec{W}}
    \right\|
    \widehat{\vec{W}}
  \right)
  \leq
  K^{m,n}
  \mathcal{Y}^{m,n}
  \left(
    \left.
      \widetilde{\vec{W}}
    \right\|
    \widehat{\vec{W}}
  \right)
  +
  \mathcal{H}^{m,n}
  \left(
    \left.
      \widetilde{\vec{W}}
    \right\|
    \widehat{\vec{W}}
  \right)
  ,
\end{equation}
and we see that the assumptions of Lemma~\ref{lm:10} are satisfied. Consequently, we can conclude that for $n, m \in (0,1)$, $n>m>\frac{n}{2}$ we have
$
\mathcal{Y}^{m,n}
\left(
  \left.
    \widetilde{\vec{W}}
  \right\|
  \widehat{\vec{W}}
\right)
\xrightarrow{t \to + \infty}
0
$,
which in virtue of the definition of $\mathcal{Y}^{m,n}$, see~\eqref{eq:229}, means that
  \begin{equation}
    \label{eq:248}
    \int_{\Omega}
    \rho
    \cheatvolref
    \widehat{\temp}
    \left[
      \frac{1}{n}
      \left(
        1 + \frac{\widetilde{\temp}}{\widehat{\temp}}
      \right)^{n}
      -
      \frac{1}{m}
      \left(
        1 + \frac{\widetilde{\temp}}{\widehat{\temp}}
      \right)^{m}
      +
      \frac{n-m}{mn}
    \right]
    \,
    \cvolumee
    \xrightarrow{t \to + \infty}
    0
    ,
  \end{equation}
which gives us the decay of temperature perturbations. Now we recall that the decay of the velocity perturbation has been obtained already by a trivial manipulation described in Section~\ref{sec:estim-time-deriv}, hence we know the temperature/velocity perturbation decays as desired. We can summarise our findings as a theorem.

\begin{theorem}[Decay of temperature/velocity perturbations]
  \label{thr:1}
  Let us consider the temperature/velocity perturbation $\widetilde{\temp}$, $\widetilde{\vec{v}}$ to the steady temperature/velocity field $\widehat{\temp}$, $\widehat{\vec{v}}= \vec{0}$. Let us assume that the perturbation is the classical solution to the governing equations~\eqref{eq:governing-equations-perturbation} in domain $\Omega$ with boundary conditions~\eqref{eq:eq:boundary-conditions-perturbation} and \emph{arbitrary} initial conditions~\eqref{eq:initial-conditions-perturbation}, and that this solution exists for all times. Let $m, n \in (0,1)$, $n> m > \frac{n}{2}$, then the temperature perturbation~$\widetilde{\temp}$ satisfies
  \begin{equation}
    \label{eq:249}
    \int_{\Omega}
    \rho
    \cheatvolref
    \widehat{\temp}
    \left[
      \frac{1}{n}
      \left(
        1 + \frac{\widetilde{\temp}}{\widehat{\temp}}
      \right)^{n}
      -
      \frac{1}{m}
      \left(
        1 + \frac{\widetilde{\temp}}{\widehat{\temp}}
      \right)^{m}
      +
      \frac{n-m}{mn}
    \right]
    \,
    \cvolumee
    \xrightarrow{t \to + \infty}
    0
    .
  \end{equation}
  Furthermore, the velocity perturbation $\widetilde{\vec{v}}$ satisfies
  $
  \norm[\sleb{2}{\Omega}]{\widetilde{\vecv}}^2 \leq  \left. \norm[\sleb{2}{\Omega}]{\widetilde{\vecv}}^2 \right|_{t=0} \exponential{- \frac{2 \mu}{C_P \rho}t}
$, where~$C_P$ is the Poincar\'e constant for the domain $\Omega$.
\end{theorem}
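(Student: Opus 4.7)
The plan is to realise the stability result as a direct consequence of the machinery assembled in Lemmas~\ref{lm:4}, \ref{lm:5}, \ref{lm:1}, together with the two auxiliary pointwise inequalities already advertised in Section~\ref{sec:appl-lemma-decay}. For the velocity part, no new work is needed: the exponential decay of $\norm[\sleb{2}{\Omega}]{\widetilde{\vecv}}$ is just inequality~\eqref{eq:108}, which follows from testing the momentum equation~\eqref{eq:34} against $\widetilde{\vecv}$ and invoking Poincar\'e and Korn. The interesting half is therefore the decay of the thermal functional~\eqref{eq:249}.

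First, I would pick any pair $m,n\in(0,1)$ with $n>m>\tfrac{n}{2}$ and form the quantity
\[
  \mathcal{Y}^{m,n}
  =_{\bydefinition}
  \mathcal{V}_{\mathrm{meq}}^{\vartemp,\,m}
  -
  \mathcal{V}_{\mathrm{meq}}^{\vartemp,\,n}.
\]
The kinetic contributions in~\eqref{eq:136} cancel, so $\mathcal{Y}^{m,n}$ reduces to the spatial integral on the left-hand side of~\eqref{eq:249}. Using an elementary one-variable lemma (monotonicity of $\tfrac{1}{p}((1+x)^p-1)$ in $p$ on $(-1,\infty)$), $\mathcal{Y}^{m,n}$ is pointwise non-negative and vanishes if and only if $\widetilde{\temp}\equiv 0$; thus the convergence of $\mathcal{Y}^{m,n}$ to zero is exactly the statement in~\eqref{eq:249}.

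Second, subtracting the evolution identity~\eqref{eq:137} for exponent $n$ from the one for exponent $m$ gives an exact formula for $\tfrac{d}{dt}\mathcal{Y}^{m,n}$. The strategy is to split the ``$m$-gradient'' term and then upgrade it to a coercive bound on $\mathcal{Y}^{m,n}$ itself. Concretely, I would add and subtract $\int_{\Omega} 4\tfrac{1-m}{m^2}\kapparef \widehat{\temp}\, |\nabla d_{\mathrm{th},\,n}|^2\,\cvolumee$, so that the purely negative part becomes
\[
  -\int_{\Omega}
  4\,\frac{1-m}{m^2}\,\kapparef\,\widehat{\temp}
  \Bigl(|\nabla d_{\mathrm{th},\,m}|^2+|\nabla d_{\mathrm{th},\,n}|^2\Bigr)\,\cvolumee.
\]
Applying Poincar\'e and the pointwise inequality~\eqref{eq:306} (valid precisely because $n>m>\tfrac{n}{2}$) turns this into $-K^{m,n}\mathcal{Y}^{m,n}$ for an explicit positive constant $K^{m,n}$, while the leftover terms, i.e.\ the two remainder pieces $\mathcal{R}^{\vartheta,m}$, $\mathcal{R}^{\vartheta,n}$ and the two stray gradient integrals involving $\nabla d_{\mathrm{th},\,n}$, can be absorbed into a non-negative quantity $\mathcal{H}^{m,n}$. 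The resulting differential inequality is
\[
  \dd{\mathcal{Y}^{m,n}}{t}
  \leq
  -K^{m,n}\mathcal{Y}^{m,n}+\mathcal{H}^{m,n}.
\]
By Lemma~\ref{lm:3} and Lemma~\ref{lm:5}, each summand defining $\mathcal{H}^{m,n}$ has finite time integral on $(0,+\infty)$.

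Third, I would integrate this inequality on $(0,t)$: since $\mathcal{Y}^{m,n}\geq 0$ and $\mathcal{H}^{m,n}$ is time-integrable, one concludes $\int_{0}^{+\infty}\mathcal{Y}^{m,n}(\tau)\,\diff\tau<+\infty$. Replacing $-K^{m,n}\mathcal{Y}^{m,n}$ by $+K^{m,n}\mathcal{Y}^{m,n}$ on the right-hand side produces the monotonicity hypothesis of Lemma~\ref{lm:1} (with $f(y)=K^{m,n}y$, $h=\mathcal{H}^{m,n}$). Lemma~\ref{lm:1} then yields $\mathcal{Y}^{m,n}(t)\to 0$ as $t\to+\infty$, which is~\eqref{eq:249}.

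The main obstacle is the step that converts a bound involving $\nabla d_{\mathrm{th},\,m}$, $\nabla d_{\mathrm{th},\,n}$ into a bound involving $\mathcal{Y}^{m,n}$ itself; this requires both the Poincar\'e inequality and the pointwise estimate~\eqref{eq:306}, and it is exactly this estimate that forces the seemingly mysterious restriction $n>m>\tfrac{n}{2}$. All the other ingredients --- existence of the Lyapunov family~\eqref{eq:136}, the integrability of the remainder terms, and Lemma~\ref{lm:1} on the decay of integrable functions --- are already available, so once the Poincar\'e-plus-pointwise estimate step is in place the theorem reduces to bookkeeping.
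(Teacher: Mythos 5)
Your proposal is correct and follows essentially the same route as the paper: the same difference functional $\mathcal{Y}^{m,n}=\mathcal{V}_{\mathrm{meq}}^{\vartemp,\,m}-\mathcal{V}_{\mathrm{meq}}^{\vartemp,\,n}$, the same add-and-subtract of the $n$-gradient term followed by Poincar\'e and the pointwise inequality~\eqref{eq:306} (which is where $n>m>\tfrac{n}{2}$ enters), the same absorption of the remainders into a time-integrable $\mathcal{H}^{m,n}$ via Lemmas~\ref{lm:3} and~\ref{lm:5}, and the same two-step application of Lemma~\ref{lm:1}. The only detail you gloss over is the reinsertion of $\widehat{\temp}$ into the coercive term via the ratio $\min_{\Omega}\widehat{\temp}/\max_{\Omega}\widehat{\temp}$, which is indeed just bookkeeping.
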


The integral~\eqref{eq:249} neither introduces a Lebesgue type norm of the temperature perturbation, nor does it provide an upper bound on a Lebesgue type norm of the temperature perturbation. This might be interpreted as a weakness of the result, since the functional does not measure the distance from the steady state in a proper sense of the word. Let us however see what happens if we rewrite~\eqref{eq:249} in terms of the relative entropy
$
\relentropy = \cheatvolref \ln \left( 1 + \frac{\widetilde{\temp}}{\widehat{\temp}}\right)
$,
see~\eqref{eq:78} and~\eqref{eq:79}. We see that
\begin{equation}
  \label{eq:333}
  \frac{1}{n}
  \left(
    1 + \frac{\widetilde{\temp}}{\widehat{\temp}}
  \right)^{n}
  -
  \frac{1}{m}
  \left(
    1 + \frac{\widetilde{\temp}}{\widehat{\temp}}
  \right)^{m}
  +
  \frac{n-m}{mn}
  =
  \frac{1}{n}
  \exponential{n \frac{\relentropy}{\cheatvolref}}
  -
  \frac{1}{m}
  \exponential{m \frac{\relentropy}{\cheatvolref}}
  +
  \frac{n-m}{mn}.
\end{equation}
Since the temperature~$\temp = \widehat{\temp} + \widetilde{\temp}$ is \emph{bounded from below} uniformly in space and time, see Section~\ref{sec:boundary-conditions}, we know that the relative entropy $\relentropy$ is also bounded from below uniformly in space and time. Concerning the behaviour of the function
$
  f(x) =_{\bydefinition}
  \frac{1}{n}
  \exponential{n x}
  -
  \frac{1}{m}
  \exponential{m x}
  +
  \frac{n-m}{mn}
$,
that appears on the right-hand side of~\eqref{eq:333} we can use Lemma~\ref{lm:9}, and show that for any $l \in \N$, $l\geq 3$ there exists a constant~$L$ such that
\begin{equation}
  \label{eq:352}
  \frac{1}{L}
  \absnorm{
    \frac{\relentropy}{\cheatvolref}
  }^l
  \leq
  \frac{1}{n}
  \exponential{n \frac{\relentropy}{\cheatvolref}}
  -
  \frac{1}{m}
  \exponential{m \frac{\relentropy}{\cheatvolref}}
  +
  \frac{n-m}{mn}
\end{equation}
(See Figure~\ref{fig:relative-entropy-sketch} for a visualisation of this inequality.) The inequality~\eqref{eq:352} then implies that
\begin{equation}
  \label{eq:353}
  0
  \leq
  \frac{1}{L}
  \int_{\Omega}
  \rho \cheatvolref
  \widehat{\temp}
  \absnorm{
    \frac{\relentropy}{\cheatvolref}
  }^l
  \,
  \cvolumee
  \leq
  \int_{\Omega}
  \rho \cheatvolref
  \widehat{\temp}
  \left[
    \frac{1}{n}
    \left(
      1 + \frac{\widetilde{\temp}}{\widehat{\temp}}
    \right)^{n}
    -
    \frac{1}{m}
    \left(
      1 + \frac{\widetilde{\temp}}{\widehat{\temp}}
    \right)^{m}
    +
    \frac{n-m}{mn}
  \right]
  \,
  \cvolumee
  ,
\end{equation}
where the integral on the right-hand side converges to zero as $t \to + \infty$. Consequently, we have proved the following corollary.

\begin{corollary}[Decay of relative entropy]
  \label{crl:1}
  Let the assumptions of Theorem~\ref{thr:1} be fulfilled, and let $l \in [3,+ \infty)$, then
  \begin{equation}
    \label{eq:354}
    \int_{\Omega}
    \rho \cheatvolref
    \widehat{\temp}
    \absnorm{
      \frac{\relentropy}{\cheatvolref}
    }^l
    \,
    \cvolumee
    \xrightarrow{t \to + \infty}
    0
    .
  \end{equation}
\end{corollary}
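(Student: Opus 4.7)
The plan is to deduce Corollary~\ref{crl:1} directly from Theorem~\ref{thr:1} by establishing the pointwise inequality~\eqref{eq:352} and integrating it against the positive weight $\rho\cheatvolref\widehat{\temp}$. The corollary is then a squeeze between zero and the functional whose decay is already known from Theorem~\ref{thr:1}.

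First I would rewrite the integrand of~\eqref{eq:249} through identity~\eqref{eq:333} as $f\left(\relentropy/\cheatvolref\right)$, where $f(x) =_{\bydefinition} \frac{1}{n}\exponential{nx}-\frac{1}{m}\exponential{mx}+\frac{n-m}{mn}$. For $0<m<n<1$ this function satisfies $f(0)=f'(0)=0$, it is strictly positive on $\R\setminus\{0\}$, it tends to the positive constant $\frac{n-m}{mn}$ as $x\to-\infty$, and it grows like $\frac{1}{n}\exponential{nx}$ as $x\to+\infty$. In particular, $f(x)/\absnorm{x}^l$ admits a uniform positive lower bound on any half-line $[a,+\infty)$, which is precisely the content of Lemma~\ref{lm:9}.

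The parabolic bound on the temperature recalled in Section~\ref{sec:boundary-conditions} gives $\temp = \widehat{\temp}+\widetilde{\temp}\geq \delta >0$ uniformly in space and time, and since $\widehat{\temp}$ is bounded above on $\Omega$, this translates into a uniform lower bound $\relentropy/\cheatvolref = \ln\left(1+\widetilde{\temp}/\widehat{\temp}\right)\geq a$ for some $a\in\R$ independent of $(\vec{x},t)$. Applying Lemma~\ref{lm:9} with this lower bound yields, for every integer $l\geq 3$, a constant $L$ such that~\eqref{eq:352} holds pointwise in $\Omega$ and uniformly in $t$. Multiplying by the positive bounded weight $\rho\cheatvolref\widehat{\temp}$ and integrating over $\Omega$ then produces the sandwich~\eqref{eq:353}; the right-hand side is exactly the quantity that Theorem~\ref{thr:1} drives to zero as $t\to+\infty$, so the squeeze theorem gives~\eqref{eq:354} for every integer $l\geq 3$.

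The extension to arbitrary real $l\in[3,+\infty)$ is a routine interpolation: for $l\in(k,k+1)$ with $k\in\N$, $k\geq 3$, the pointwise inequality $\absnorm{z}^l\leq \absnorm{z}^k+\absnorm{z}^{k+1}$ (verified separately on $\{\absnorm{z}\leq 1\}$ and $\{\absnorm{z}\geq 1\}$) combined with the integer case dispatches the claim. The only genuinely nontrivial ingredient is the uniform lower bound on $\temp$; this is a classical parabolic maximum-principle fact already cited in Section~\ref{sec:boundary-conditions}, so I do not anticipate any real obstacle beyond the bookkeeping outlined above.
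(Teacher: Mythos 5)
Your proof is correct and follows essentially the same route as the paper's: rewrite the integrand of~\eqref{eq:249} via~\eqref{eq:333}, use the uniform lower bound on the temperature from Section~\ref{sec:boundary-conditions} to bound $\relentropy/\cheatvolref$ from below, invoke Lemma~\ref{lm:9} to obtain the pointwise inequality~\eqref{eq:352}, and conclude by squeezing against the functional that Theorem~\ref{thr:1} drives to zero. Your interpolation step $\absnorm{z}^{l}\leq\absnorm{z}^{k}+\absnorm{z}^{k+1}$ for non-integer $l$ is a small but genuine addition, since Lemma~\ref{lm:9} as stated covers only natural $l$ while the corollary claims all real $l\in[3,+\infty)$, a point the paper's own argument leaves implicit.
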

Note that the convergence to zero in Corrolary~\ref{crl:1} is not uniform with respect to~$l$, since the constant $L$ in~\eqref{eq:352} depends on $l$. The corollary shows that the relative entropy~$\relentropy$ converges to zero in \emph{any} Lebesgue space norm $\sleb{p}{\Omega}$, $p\in [1, +\infty)$, hence the relative entropy unlike the temperature perturbation exhibits the decay in a well established normed space.

\begin{figure}[h]
  \centering
  \subfloat[\label{fig:relative-entropy-sketch-a} Cubic function, $l=3$, $\frac{1}{L}=0.00111937$.]
  {\includegraphics[width=0.46\textwidth]{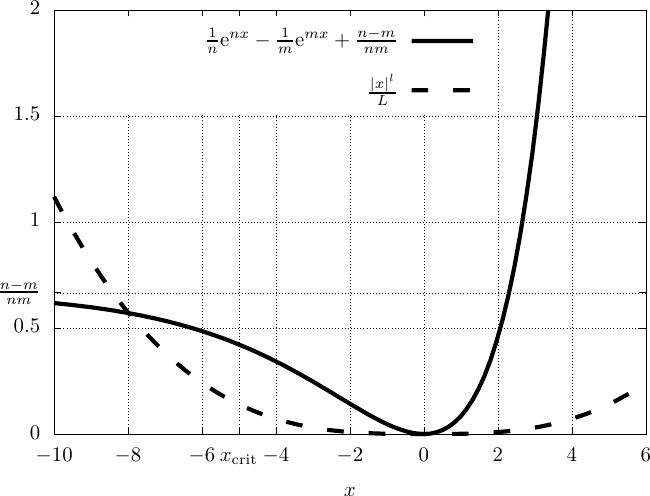}}
  \qquad
  \subfloat[\label{fig:relative-entropy-sketch-b} Quartic function, $l=4$, $\frac{1}{L}=0.000397861$.]
  {\includegraphics[width=0.46\textwidth]{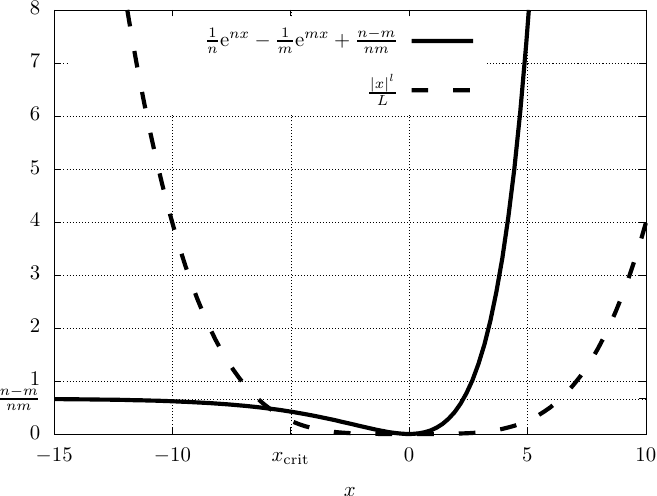}}
  \caption{Visual comparison of functions
    $
    \frac{\absnorm{x}^l}{L}
    $
    and
    $
    \frac{1}{n}
    \exponential{n x}
    -
    \frac{1}{m}
    \exponential{m x}
    +
    \frac{n-m}{mn}
    $ for different exponents~$l$. Parameter values are $n=\frac{1}{2}$, $m=\frac{3}{8}$, $x_{\mathrm{\crit}}=-5$.}
  \label{fig:relative-entropy-sketch}
\end{figure}

\section{Conclusion}
\label{sec:conclusion}

We have investigated the stability of a spatially inhomogeneous non-equilibrium steady state in a thermodynamically open system. Namely we have considered an incompressible heat conducting viscous fluid occupying a vessel with walls kept at spatially non-uniform temperature. The steady state in this system is the rest state (zero velocity) with the temperature field given by the solution of the steady heat equation. This steady state is expected to be stable with respect to arbitrary perturbation.

This trivial fact is difficult to prove using the corresponding evolution equations. The main difficulty is twofold.  First, the system of interest is a thermodynamically \emph{open system}, which means that we have \emph{a priori} no control on the fluxes through the boundary. Second, we consider the full system of governing equations \emph{including the dissipative heating term}. This term is neglected in most of the mathematical works, see~\cite{joseph.dd:stability*1,joseph.dd:stability}, \cite{straughan.b:energy} and references therein. (Albeit these works aim at a different and more complex problem of thermal convection within the context of Oberbeck--Boussinesq approximation, the key difficulties are the same.) This holds also for works that deal with the temperature dependent thermal conductivity or temperature dependent viscosity, see for example~\cite{flavin.jn.rionero.s:benard-problem} and~\cite{diaz.ji.straughan.b:global}. If the stability is considered including the dissipative heating, the available works see~\cite{richardson.ll:nonlinear} and \cite{kagei.y.ruzicka.m.ea:natural} typically lead to conditional results, include \emph{ad hoc} constructed functionals, require high regularity of the velocity field, and are limited to constant transport coefficients.

In our analysis we have assumed that the governing equations---the Navier--Stokes--Fourier equations for an incompressible fluid---have the classical solution that exists for all times. Using the thermodynamically motivated method for the construction of prospective Lyapunov type functionals, see~\cite{bulcek.m.malek.j.ea:thermodynamics}, we have then been able to address all these difficulties. The main theorem, see Theorem~\ref{thr:1}, then states that the temperature and velocity perturbations vanish as time goes to infinity, and that this behaviour holds for any choice of initial perturbation and for any shape of the vessel. This implies the recovery of the spatially inhomogeneous non-equilibrium steady state for \emph{any initial perturbation}.

We note that the only piece of information regarding the behaviour of the dissipative heating term we have used in the stability analysis has been rather weak, which is a very welcome feature. In fact, we have used just the positivity of the term and its integrability in space and time. Consequently, it might be speculated that a similar thermodynamically based analysis is feasible not only for the incompressible Navier--Stokes--Fourier fluid but also for more complex models, provided that one knows a thermodynamic basis for such models\footnote{For a thermodynamic basis for some popular models see for example~\cite{rajagopal.kr.srinivasa.ar:thermodynamic}, \cite{heida.m.malek.j:on}, \cite{malek.j.rajagopal.kr.ea:on}, \cite{hron.j.milos.v.ea:on}, \cite{malek.j.prusa.v.ea:thermodynamics}, \cite{malek.j.rajagopal.kr.ea:derivation}, and~\cite{dostalk.m.prusa.v.ea:on}, where the thermodynamic basis is developed in the form directly applicable in the outlined stability analysis. Note that the basic thermodynamic building blocks are also provided for the models constructed within other thermodynamic frameworks such as the GENERIC framework, see~\cite{pavelka.m.klika.v.ea:multiscale}.}. 
In particular, if the given model properly characterises the degradation of the mechanical energy to the thermal energy, then we would again get the integrability and positivity of the dissipative heating term, and we could essentially follow the same argument as in the present contribution. The only difference would be a different formula for the dissipative heating, and a more involved treatment of the term arising from the convective derivative.   

Once it is clear how to handle the dissipative heating term in the stability analysis of a (physically) simple system, one might speculate about more ambitious applications. In particular, one can again consider a heat conducting viscous fluid occupying a vessel with walls kept at spatially non-uniform temperature. This time one can however consider a fluid with a temperature dependent density, which leads to the Rayleigh--B\'enard convection problem. In such a setting the presence of the \emph{buoyancy term} makes the stability analysis more challenging, since the evolution equation for the thermal and mechanical variables are tightly coupled. The expected result in the Rayleigh--B\'enard setting would be that the classical results, see~\cite{joseph.dd:stability}, are not substantially altered by the presence of the dissipative heating term. Such an analysis is however beyond the scope of this work, and it will be carried out in the future.

\appendix
\section{Derivation of the formulae for the time derivatives of the functionals}
\label{sec:altern-deriv-form}
The derivation presented in the main text can be greatly simplified if we restrict ourselves to formal manipulations. We recall that the main objective is to find the time derivative of the functional
\begin{multline}
  \label{eq:310}
  \mathcal{V}_{\mathrm{meq}}
  \left(
    \left.
      \widetilde{\vec{W}}
    \right\|
    \widehat{\vec{W}}
  \right)
  =
  -
  \int_{\Omega} 
  \rho
  \left[
    \widehat{\temp} \entropy(\widehat{\temp} + \widetilde{\temp})
    -
    \widehat{\temp} \entropy(\widehat{\temp})
    -
    \widehat{\temp} \left. \pd{\entropy}{\temp} \right|_{\temp = \widehat{\temp}} \widetilde{\temp}
    -
    \ienergy(\widehat{\temp} + \widetilde{\temp})
    +
    \ienergy(\widehat{\temp})
    +
    \left. \pd{\ienergy}{\temp} \right|_{\temp = \widehat{\temp}} \widetilde{\temp}
  \right]
  \,
  \cvolumee
  \\
  +
  \int_{\Omega}
  \rho
  \frac{1}{2}
  \absnorm{\widetilde{\vec{v}}}^2
  \,
  \cvolumee
  =
    -
  \int_{\Omega} 
  \rho
  \left[
    \widehat{\temp}
    \left(
      \entropy(\widehat{\temp} + \widetilde{\temp})
      -
      \entropy(\widehat{\temp})
    \right)
    -
    \left(
      \ienergy(\widehat{\temp} + \widetilde{\temp})
      -
      \ienergy(\widehat{\temp})
    \right)
  \right]
  \,
  \cvolumee
  +
  \int_{\Omega}
  \rho
  \frac{1}{2}
  \absnorm{\widetilde{\vec{v}}}^2
  \,
  \cvolumee
  ,
\end{multline}
where we have used the identity $\widehat{\temp} \left. \pd{\entropy}{\temp} \right|_{\temp = \widehat{\temp}} = \left. \pd{\ienergy}{\temp} \right|_{\temp = \widehat{\temp}}$, which holds universally for any material. In the particular case of the fluid described by the Helmholtz free energy in the form~\eqref{eq:free-energy-incompressible-nse}, we get the explicit formula for the functional~$\mathcal{V}_{\mathrm{meq}}$, see Lemma~\ref{lm:2}, equation~\eqref{eq:54}. It is convenient to rewrite~\eqref{eq:54} in terms of the relative entropy,
$
  \relentropy = \cheatvolref \ln \left( 1 + \frac{\widetilde{\temp}}{\widehat{\temp}}\right)
$,
see~\eqref{eq:78} and~\eqref{eq:79}. If we do so, the counterpart of~\eqref{eq:54} reads 
\begin{equation}
  \label{eq:311}
  \mathcal{V}_{\mathrm{meq}}
    \left(
      \left.
        \widetilde{\vec{W}}
      \right\|
      \widehat{\vec{W}}
    \right)
    =_{\bydefinition}
    \int_{\Omega}
    \left[
      \rho
      \cheatvolref
      \widehat{\temp}
      \left[
        \exponential{\frac{\relentropy}{\cheatvolref}}
        -
        \frac{\relentropy}{\cheatvolref}
        -
        1
      \right]
      +
      \frac{1}{2} \rho \absnorm{\widetilde{\vec{v}}}^2
    \right]
    \,
    \cvolumee
    .
\end{equation}
We know that the evolution equation for the entropy of the perturbed state reads
  \begin{equation}
    \label{eq:308}
    \rho
    \dd{\entropy}{t}
    =
    \frac{
      \divergence
      \left(
        \kappa (\widehat{\temp} + \widetilde{\temp}) \nabla \left( \widehat{\temp} + \widetilde{\temp} \right)
      \right)
    }
    {
      \widehat{\temp} + \widetilde{\temp} 
    }
    +
    \frac{\entprodctemp_{\mathrm{mech}}\left(\widehat{\vec{W}} + \widetilde{\vec{W}}\right)}{\widehat{\temp} + \widetilde{\temp} }
    ,
  \end{equation}
  which in our case of constant heat conductivity $\kappa$ and vanishing steady state velocity $\widehat{\vecv} = \vec{0}$ simplifies to
  \begin{equation}
    \label{eq:307}
    \rho
    \pd{\entropy{\left( \widehat{\temp} + \widetilde{\temp} \right)}}{t}
    +
    \rho
    \vectordot{
      \widetilde{\vec{v}}
    }
    {
      \nabla
      \entropy\left( \widehat{\temp} + \widetilde{\temp} \right)
    }
    =
    \frac{
      \divergence
      \left(
        \kapparef
        \nabla
        \left(
          \widehat{\temp}
          \left(
            1 + \frac{\widetilde{\temp}}{\widehat{\temp}}
          \right)
        \right)
      \right)
    }
    {
      \widehat{\temp}
      \left(
        1 + \frac{\widetilde{\temp}}{\widehat{\temp}}
      \right)
    }
    +
    \frac{
      \entprodctemp_{\mathrm{mech}}\left(\widehat{\vec{W}} + \widetilde{\vec{W}}\right)
    }
    {
      \widehat{\temp}
      \left(
        1 + \frac{\widetilde{\temp}}{\widehat{\temp}}
      \right)
    }
    .
  \end{equation}
  The last equation can be again rewritten in terms of the relative entropy,
  \begin{equation}
    \label{eq:312}
    \rho
    \pd{\relentropy}{t}
    =
    \frac{
      \divergence
      \left(
        \kapparef
        \nabla
        \left(
          \widehat{\temp}
          \exponential{\frac{\relentropy}{\cheatvolref}}
        \right)
      \right)
    }
    {
      \widehat{\temp}
      \exponential{\frac{\relentropy}{\cheatvolref}}
    }
    +
    \frac{
      \entprodctemp_{\mathrm{mech}}\left(\widehat{\vec{W}} + \widetilde{\vec{W}}\right)
    }
    {
      \widehat{\temp}
      \exponential{\frac{\relentropy}{\cheatvolref}}
    }
    -
    \rho
    \vectordot{
      \widetilde{\vec{v}}
    }
    {
      \nabla
      \relentropy
    }
    -
    \rho
    \vectordot{
      \widetilde{\vec{v}}
    }
    {
      \nabla
      \widehat{\entropy}
    }
    .
  \end{equation}
  Note that at this point we heavily exploit the fact that in our case the relative entropy is given by a formula that nicely matches with the structure of the heat flux term. Using the evolution equation for the relative entropy, one can derive the evolution equation for any quantity of the form
  $
  \cheatvolref
  \widehat{\temp}
  f
  (
    \exponential{\frac{\relentropy}{\cheatvolref}}
  )
  $
  ,
  where $f$ is a given function, see Lemma~\ref{lm:8}, equation~\eqref{eq:322}. Using the lemma for particular choices of $f$ then allows us to seamlessly derive formulae for the time derivatives of the functionals introduced in Section~\ref{sec:lyap-like-funct} and Section~\ref{sec:reth-devel-lyap}.

    \begin{lemma}[Pointwise evolution equation for functions of the exponential of the relative entropy]
    \label{lm:8}
    Let $\dd{_{\widetilde{\vec{v}}}}{t}= \pd{}{t} + \vectordot{\widetilde{\vec{v}}}{\nabla}$ denote the material time derivative with respect to the perturbed velocity field $\widetilde{\vec{v}}$, and let $f$ denote a given function. The evolution equation for the quantity
$
    \cheatvolref
      \widehat{\temp}
      f
      (
        \exponential{\frac{\relentropy}{\cheatvolref}}
      )
$
    reads
    \begin{multline}
      \label{eq:322}
      \rho
      \dd{_{\widetilde{\vec{v}}}}{t}
      \left[
        \cheatvolref
        \widehat{\temp}
        f
        \left(
          \exponential{\frac{\relentropy}{\cheatvolref}}
        \right)
      \right]
      =
      \divergence
    \left[
      \kapparef
      \nabla
      \left(
        \widehat{\temp}
        f
        \left(
          \exponential{\frac{\relentropy}{\cheatvolref}}
        \right)
      \right)
    \right]
    -
    \kapparef
    \widehat{\temp}
    f^{\prime \prime}
    \left(
      \exponential{\frac{\relentropy}{\cheatvolref}}
    \right)
    \vectordot{
      \nabla
      \exponential{\frac{\relentropy}{\cheatvolref}}
    }
    {
      \nabla
      \exponential{\frac{\relentropy}{\cheatvolref}}
    }
    \\
    +
    f^\prime
      \left(
        \exponential{\frac{\relentropy}{\cheatvolref}}
      \right)
      \entprodctemp_{\mathrm{mech}}\left(\widehat{\vec{W}} + \widetilde{\vec{W}}\right)
    +
    \rho
    \cheatvolref
    \left[
      f
      \left(
        \exponential{\frac{\relentropy}{\cheatvolref}}
      \right)
      -
      f^\prime
      \left(
        \exponential{\frac{\relentropy}{\cheatvolref}}
      \right)
      \exponential{\frac{\relentropy}{\cheatvolref}}
    \right]
      \vectordot{
        \widetilde{\vec{v}}
    }
    {
      \nabla
      \widehat{\temp}
    }
    .
    \end{multline}
  \end{lemma}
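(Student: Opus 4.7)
The plan is to derive~\eqref{eq:322} by rewriting the relative-entropy equation~\eqref{eq:312} in terms of the auxiliary variable $E =_{\bydefinition} \exponential{\frac{\relentropy}{\cheatvolref}}$, then applying the chain rule to $\cheatvolref\widehat{\temp}f(E)$. The only two facts about the reference state that will be used are that $\widehat{\temp}$ does not depend on time and that it satisfies the steady heat equation $\divergence(\kapparef\nabla\widehat{\temp})=0$, i.e.\ $\Delta\widehat{\temp}=0$ because $\kapparef$ is constant.

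First, since $\pd{\widehat{\temp}}{t}=0$ and $\cheatvolref$ is constant, the chain rule gives
\begin{equation*}
\rho\dd{_{\widetilde{\vec{v}}}}{t}\!\left[\cheatvolref\widehat{\temp}f(E)\right]
=\rho\cheatvolref f(E)\,\vectordot{\widetilde{\vec{v}}}{\nabla\widehat{\temp}}
+\rho\cheatvolref\widehat{\temp}f^{\prime}(E)\,\dd{_{\widetilde{\vec{v}}}}{t}(E).
\end{equation*}
From $\pd{E}{t}=\frac{E}{\cheatvolref}\pd{\relentropy}{t}$ and $\nabla E=\frac{E}{\cheatvolref}\nabla\relentropy$ one has $\rho\dd{_{\widetilde{\vec{v}}}}{t}(E)=\frac{E}{\cheatvolref}\rho\bigl[\pd{\relentropy}{t}+\vectordot{\widetilde{\vec{v}}}{\nabla\relentropy}\bigr]$, so multiplying~\eqref{eq:312} by $\widehat{\temp}Ef^{\prime}(E)$ cancels the $\widehat{\temp}E$ appearing in the denominators of the right-hand side and yields
\begin{equation*}
\rho\cheatvolref\widehat{\temp}f^{\prime}(E)\,\dd{_{\widetilde{\vec{v}}}}{t}(E)
= f^{\prime}(E)\,\divergence\!\left(\kapparef\nabla(\widehat{\temp}E)\right)
+ f^{\prime}(E)\,\entprodctemp_{\mathrm{mech}}\!\left(\widehat{\vec{W}}+\widetilde{\vec{W}}\right)
- \widehat{\temp}Ef^{\prime}(E)\rho\,\vectordot{\widetilde{\vec{v}}}{\nabla\widehat{\entropy}}.
\end{equation*}
Using $\widehat{\entropy}=\cheatvolref\ln(\widehat{\temp}/\tempref)$, the last contribution becomes $-\rho\cheatvolref Ef^{\prime}(E)\,\vectordot{\widetilde{\vec{v}}}{\nabla\widehat{\temp}}$, which combines with the $f(E)$ term from the chain rule into the advertised $\rho\cheatvolref[f(E)-Ef^{\prime}(E)]\vectordot{\widetilde{\vec{v}}}{\nabla\widehat{\temp}}$ contribution.

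The one genuine identity to verify is
\begin{equation*}
f^{\prime}(E)\,\divergence\!\left(\kapparef\nabla(\widehat{\temp}E)\right)
= \divergence\!\left[\kapparef\nabla(\widehat{\temp}f(E))\right]
- \kapparef\widehat{\temp}f^{\prime\prime}(E)\,\vectordot{\nabla E}{\nabla E}.
\end{equation*}
Expanding both divergences via the product rule, both sides reduce to the common expression $\kapparef\bigl[2f^{\prime}(E)\vectordot{\nabla\widehat{\temp}}{\nabla E}+\widehat{\temp}f^{\prime}(E)\Delta E\bigr]$, once the terms proportional to $\Delta\widehat{\temp}$ (namely $\kapparef Ef^{\prime}(E)\Delta\widehat{\temp}$ on the left and $\kapparef f(E)\Delta\widehat{\temp}$ on the right) are dropped by the steady heat equation. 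This cancellation is the only nontrivial structural step: it is the mechanism through which differentiating the heat flux $\kapparef\nabla(\widehat{\temp}E)$ along a nonlinear observer $f$ produces a clean divergence plus a curvature-type term $-\kapparef\widehat{\temp}f^{\prime\prime}(E)\absnorm{\nabla E}^{2}$. Everything else is chain-rule bookkeeping, so I anticipate no further obstacle.
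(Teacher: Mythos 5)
Your proposal is correct and follows essentially the same route as the paper's own proof: both start from the relative-entropy equation~\eqref{eq:312}, apply the chain rule to $\cheatvolref\widehat{\temp}f(\exponential{\relentropy/\cheatvolref})$, and reduce the heat-flux contribution to $\divergence[\kapparef\nabla(\widehat{\temp}f)] - \kapparef\widehat{\temp}f''\absnorm{\nabla \exponential{\relentropy/\cheatvolref}}^2$ using the harmonicity of $\widehat{\temp}$ together with $\nabla\widehat{\entropy}=\cheatvolref\nabla\widehat{\temp}/\widehat{\temp}$ for the convective term. The only cosmetic difference is that you verify the key divergence identity by expanding both sides and cancelling the $\Delta\widehat{\temp}$ terms, whereas the paper reaches the same identity via the rewriting $\exponential{\relentropy/\cheatvolref}\nabla f' = \nabla(\exponential{\relentropy/\cheatvolref}f' - f)$.
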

  \begin{proof}
    The proof is based on the direct computation. Using the evolution equation for the relative entropy, see~\eqref{eq:312}, we see that
    \begin{multline}
      \label{eq:314}
      \rho
      \pd{}{t}
      \left(
        \widehat{\temp}
        \cheatvolref
        f
        \left(
          \exponential{\frac{\relentropy}{\cheatvolref}}
        \right)
      \right)
      =
      \rho
      \widehat{\temp}
      f^\prime
      \left(
        \exponential{\frac{\relentropy}{\cheatvolref}}
      \right)
      \exponential{\frac{\relentropy}{\cheatvolref}}
      \pd{
        \relentropy
      }
      {
        t
      }
      =
      f^\prime
      \left(
        \exponential{\frac{\relentropy}{\cheatvolref}}
      \right)
      \divergence
      \left[
        \kapparef
        \nabla
        \left(
          \widehat{\temp}
          \exponential{\frac{\relentropy}{\cheatvolref}}
        \right)
      \right]
      \\
      +
      f^\prime
      \left(
        \exponential{\frac{\relentropy}{\cheatvolref}}
      \right)
      \entprodctemp_{\mathrm{mech}}\left(\widehat{\vec{W}} + \widetilde{\vec{W}}\right)
      -
      \rho
      \widehat{\temp}
      f^\prime
      \left(
        \exponential{\frac{\relentropy}{\cheatvolref}}
      \right)
      \exponential{\frac{\relentropy}{\cheatvolref}}
      \vectordot{
        \widetilde{\vec{v}}
      }
      {
        \nabla
        \relentropy
      }
      -
      \rho
      \widehat{\temp}
      f^\prime
      \left(
        \exponential{\frac{\relentropy}{\cheatvolref}}
      \right)
      \exponential{\frac{\relentropy}{\cheatvolref}}
      \vectordot{
        \widetilde{\vec{v}}
      }
      {
        \nabla
        \widehat{\entropy}
      }
      ,
    \end{multline}
    where $f^\prime$ denotes the derivative of function $f$ with respect to its argument. The first term on the right-hand side can be rewritten as a divergence term plus a source term,
  \begin{multline}
    \label{eq:315}
    f^\prime
    \left(
      \exponential{\frac{\relentropy}{\cheatvolref}}
    \right)
    \divergence
    \left[
      \kapparef
      \nabla
      \left(
        \widehat{\temp}
        \exponential{\frac{\relentropy}{\cheatvolref}}
      \right)
    \right]
    =
    \divergence
    \left[
      f^\prime
      \left(
        \exponential{\frac{\relentropy}{\cheatvolref}}
      \right)
      \exponential{\frac{\relentropy}{\cheatvolref}}
      \kapparef
      \nabla
      \widehat{\temp}
      +
      f^\prime
      \left(
        \exponential{\frac{\relentropy}{\cheatvolref}}
      \right)
      \kapparef
      \widehat{\temp}
      \nabla
      \exponential{\frac{\relentropy}{\cheatvolref}}
    \right]
    \\
    -
    \kapparef
    \vectordot{
      \nabla
      \widehat{\temp}
    }
    {
      \exponential{\frac{\relentropy}{\cheatvolref}}
      \nabla
      f^\prime
      \left(
        \exponential{\frac{\relentropy}{\cheatvolref}}
      \right)
    }
    -
    \kapparef
    \widehat{\temp}
    \vectordot{
      \nabla
      \exponential{\frac{\relentropy}{\cheatvolref}}
    }
    {
      \nabla
      f^\prime
      \left(
        \exponential{\frac{\relentropy}{\cheatvolref}}
      \right)
    }
    .
  \end{multline}
  Now we use the identity
$
    \exponential{\frac{\relentropy}{\cheatvolref}}
    \nabla
    f^\prime
    \left(
      \exponential{\frac{\relentropy}{\cheatvolref}}
    \right)
    =
    \nabla
    \left(
      \exponential{\frac{\relentropy}{\cheatvolref}}
      f^\prime
      \left(
        \exponential{\frac{\relentropy}{\cheatvolref}}
      \right)
      -
      f
      \left(
        \exponential{\frac{\relentropy}{\cheatvolref}}
      \right)
    \right)
$,
which allows us to rewrite the corresponding term on right-hand side of~\eqref{eq:315} as
  \begin{equation}
    \label{eq:317}
    \kapparef
    \vectordot{
      \nabla
      \widehat{\temp}
    }
    {
      \exponential{\frac{\relentropy}{\cheatvolref}}
      \nabla
      f^\prime
      \left(
        \exponential{\frac{\relentropy}{\cheatvolref}}
      \right)
    }
    =
    \divergence
    \left[
      \kapparef
      \left(
        \nabla
        \widehat{\temp}
      \right)
    \left(
      \exponential{\frac{\relentropy}{\cheatvolref}}
      f^\prime
      \left(
        \exponential{\frac{\relentropy}{\cheatvolref}}
      \right)
      -
      f
      \left(
        \exponential{\frac{\relentropy}{\cheatvolref}}
      \right)
    \right)
    \right]
    ,
  \end{equation}
  where we have used the fact that $\divergence \left( \kapparef \nabla \widehat{\temp} \right) = 0$. Using~\eqref{eq:317} on the right-hand side of~\eqref{eq:315} reveals that
  \begin{equation}
    \label{eq:318}
        f^\prime
    \left(
      \exponential{\frac{\relentropy}{\cheatvolref}}
    \right)
    \divergence
    \left[
      \kapparef
      \nabla
      \left(
        \widehat{\temp}
        \exponential{\frac{\relentropy}{\cheatvolref}}
      \right)
    \right]
    =
    \divergence
    \left[
      \kapparef
      \nabla
      \left(
        \widehat{\temp}
        f
        \left(
          \exponential{\frac{\relentropy}{\cheatvolref}}
        \right)
      \right)
    \right]
    -
    \kapparef
    \widehat{\temp}
    f^{\prime \prime}
    \left(
      \exponential{\frac{\relentropy}{\cheatvolref}}
    \right)
    \vectordot{
      \nabla
      \exponential{\frac{\relentropy}{\cheatvolref}}
    }
    {
      \nabla
      \exponential{\frac{\relentropy}{\cheatvolref}}
    }
    .
  \end{equation}
  This finishes the manipulation with the first term on the right-hand side of~\eqref{eq:314}. Now we focus on the prospective convective term on the right-hand side of~\eqref{eq:314} . We see that
  \begin{equation}
    \label{eq:319}
    \rho
    \widehat{\temp}
    f^\prime
    \left(
      \exponential{\frac{\relentropy}{\cheatvolref}}
    \right)
    \exponential{\frac{\relentropy}{\cheatvolref}}
    \vectordot{
      \widetilde{\vec{v}}
    }
    {
      \nabla
      \relentropy
    }
    =
    \rho
    \vectordot{
      \widetilde{\vec{v}}
    }
    {
      \nabla
      \left[
        \cheatvolref
        \widehat{\temp}
        f
        \left(
          \exponential{\frac{\relentropy}{\cheatvolref}}
        \right)
      \right]
    }
    -
    \rho
    \cheatvolref
    f
    \left(
      \exponential{\frac{\relentropy}{\cheatvolref}}
    \right)
    \vectordot{
      \widetilde{\vec{v}}
    }
    {
      \nabla
      \widehat{\temp}
    }
    .
  \end{equation}
  Next we observe that $\nabla \widehat{\entropy} = \cheatvolref \frac{\nabla \widehat \temp}{\widehat{\temp}}$, and using~\eqref{eq:318} and~\eqref{eq:319} in~\eqref{eq:314} we finally obtain the evolution equation~\eqref{eq:322}.
\end{proof}

Now we are in the position to exploit Lemma~\ref{lm:8} in the derivation of formulae for the time derivative of the functionals introduced in Lemma~\ref{lm:2} and Lemma~\ref{lm:4}.  
\label{sec:ansatz}
Indeed, if we set
\begin{equation}
  \label{eq:2}
  f(y) =_{\bydefinition} y - \ln y - 1,  
\end{equation}
then $f^\prime(y) = 1 - \frac{1}{y}$, $f^{\prime \prime}(y) = \frac{1}{y^2} $, and using Lemma~\ref{lm:8} we obtain the pointwise evolution equation
  \begin{multline}
    \label{eq:325}
    \rho
    \dd{_{\widetilde{\vec{v}}}}{t}
    \left[
      \cheatvolref
      \widehat{\temp}
      \left(
        \exponential{\frac{\relentropy}{\cheatvolref}}
        -
        \frac{\relentropy}{\cheatvolref}
        -
        1
      \right)
    \right]
    =
    \divergence
    \left[
      \kapparef
      \nabla
      \left(
        \widehat{\temp}
      \left(
        \exponential{\frac{\relentropy}{\cheatvolref}}
        -
        \frac{\relentropy}{\cheatvolref}
        -
        1
      \right)
      \right)
    \right]
    -
    \kapparef
    \widehat{\temp}
    \frac{
      \vectordot{
        \nabla
        \exponential{\frac{\relentropy}{\cheatvolref}}
      }
      {
        \nabla
        \exponential{\frac{\relentropy}{\cheatvolref}}
      }
    }
    {
      \left(
        \exponential{\frac{\relentropy}{\cheatvolref}}
      \right)^2
    }
    \\
    +
    \entprodctemp_{\mathrm{mech}}\left(\widehat{\vec{W}} + \widetilde{\vec{W}}\right)
    -
    \frac{
      \entprodctemp_{\mathrm{mech}}\left(\widehat{\vec{W}} + \widetilde{\vec{W}}\right)
    }
    {
        \exponential{\frac{\relentropy}{\cheatvolref}}
      }
    -
    \rho
    \relentropy
    \vectordot{
        \widetilde{\vec{v}}
    }
    {
      \nabla
      \widehat{\temp}
    }.
  \end{multline}
  The last equation can be upon straightforward manipulations and upon using the fact that $\entprodctemp_{\mathrm{mech}}\left(\widehat{\vec{W}} + \widetilde{\vec{W}}\right) = 2 \mu \tensordot{\widetilde{\gradsym}}{\widetilde{\gradsym}}$ rewritten in terms of the temperature
  \begin{multline}
    \label{eq:329}
    \rho
    \dd{_{\widetilde{\vec{v}}}}{t}
    \left[
      \cheatvolref
      \widehat{\temp}
      \left(
        \frac{\widetilde{\temp}}{\widehat{\temp}}
        -
        \ln
        \left(
          1
          +
          \frac{\widetilde{\temp}}{\widehat{\temp}}
        \right)
      \right)
    \right]
    =
    \divergence
    \left[
      \kapparef
      \nabla
      \left\{
        \widehat{\temp}
      \left(
        \frac{\widetilde{\temp}}{\widehat{\temp}}
        -
        \ln
        \left(
          1
          +
          \frac{\widetilde{\temp}}{\widehat{\temp}}
        \right)
      \right)
      \right\}
    \right]
    \\
    -
    \kapparef
    \widehat{\temp}
      \vectordot{
        \nabla
        \ln
        \left(
          1
          +
          \frac{\widetilde{\temp}}{\widehat{\temp}}
        \right)
      }
      {
        \nabla
        \ln
        \left(
          1
          +
          \frac{\widetilde{\temp}}{\widehat{\temp}}
        \right)
      }
    +
    2 \mu \tensordot{\widetilde{\gradsym}}{\widetilde{\gradsym}}
    -
    \frac{
      2 \mu \tensordot{\widetilde{\gradsym}}{\widetilde{\gradsym}}
    }
    {
      1
      +
      \frac{\widetilde{\temp}}{\widehat{\temp}}
    }
    -
    \rho
    \cheatvolref
    \ln
    \left(
      1
      +
      \frac{\widetilde{\temp}}{\widehat{\temp}}
    \right)
    \vectordot{
        \widetilde{\vec{v}}
    }
    {
      \nabla
      \widehat{\temp}
    }
    .
  \end{multline}
  This is the sought pointwise evolution equation for the thermal part of the integrand in the functional~$\mathcal{V}_{\mathrm{meq}}$. Integrating~\eqref{eq:329} over the (material) domain $\Omega$ leads directly to the result reported in Lemma~\ref{lm:2}. (Recall that the boundary condition $\left. \widetilde{\temp} \right|_{\partial \Omega} = 0$ guarantees that the boundary terms vanish.)

  On the other hand, if we choose $m \in \R$, and if we set
  \begin{equation}
    \label{eq:8}
    f(y) =_{\bydefinition} y - \frac{1}{m} \left(y^m -1\right) - 1,    
  \end{equation}
  then $f^\prime(y) = 1 - y^{m-1}$, $f^{\prime \prime}(y) = -(m-1) y^{m-2} $, and using Lemma~\ref{lm:8} we obtain the pointwise evolution equation
  \begin{multline}
    \label{eq:30}
    \rho
    \dd{_{\widetilde{\vec{v}}}}{t}
    \left[
      \cheatvolref
      \widehat{\temp}
      \left(
        \exponential{\frac{\relentropy}{\cheatvolref}}
        -
        \frac{1}{m} \left( \left(\exponential{\frac{\relentropy}{\cheatvolref}}\right)^m -1 \right)
        -
        1
      \right)
    \right]
    \\
    =
    \divergence
    \left[
      \kapparef
      \nabla
      \left\{
        \widehat{\temp}
      \left(
        \exponential{\frac{\relentropy}{\cheatvolref}}
        -
        \frac{1}{m} \left( \left(\exponential{\frac{\relentropy}{\cheatvolref}}\right)^m -1 \right)
        -
        1
      \right)
      \right\}
    \right]
    \\
    +
    (m-1)
    \kapparef
    \widehat{\temp}
    \frac{
      \vectordot{
        \nabla
        \exponential{\frac{\relentropy}{\cheatvolref}}
      }
      {
        \nabla
        \exponential{\frac{\relentropy}{\cheatvolref}}
      }
    }
    {
      \left(
        \exponential{\frac{\relentropy}{\cheatvolref}}
      \right)^{2-m}
    }
    +
    \entprodctemp_{\mathrm{mech}}\left(\widehat{\vec{W}} + \widetilde{\vec{W}}\right)
    -
    \frac{
      \entprodctemp_{\mathrm{mech}}\left(\widehat{\vec{W}} + \widetilde{\vec{W}}\right)
    }
    {
        \left(\exponential{\frac{\relentropy}{\cheatvolref}}\right)^{1-m}
      }
    +
    \rho
    \cheatvolref
    \frac{m-1}{m}
    \left(
      \left(\exponential{\frac{\relentropy}{\cheatvolref}}\right)^m-1
    \right)
    \vectordot{
        \widetilde{\vec{v}}
    }
    {
      \nabla
      \widehat{\temp}
    }
    .
  \end{multline}
  Using the specific formula for the mechanical dissipation, and rewriting~\eqref{eq:325} in terms of the temperature we get
  \begin{multline}
    \label{eq:331}
    \rho
    \dd{_{\widetilde{\vec{v}}}}{t}
    \left[
      \cheatvolref
      \widehat{\temp}
      \left(
        \frac{\widetilde{\temp}}{\widehat{\temp}}
        -
        \frac{1}{m} \left( \left( 1 + \frac{\widetilde{\temp}}{\widehat{\temp}} \right)^m -1 \right)
      \right)
    \right]
    =
    \divergence
    \left[
      \kapparef
      \nabla
      \left\{
        \widehat{\temp}
      \left(
        \frac{\widetilde{\temp}}{\widehat{\temp}}
        -
        \frac{1}{m} \left( \left( 1 + \frac{\widetilde{\temp}}{\widehat{\temp}} \right)^m -1 \right)
      \right)
      \right\}
    \right]
    \\
    -
    4
    \frac{
      1-m
    }
    {
      m^2
    }
    \kapparef
    \widehat{\temp}
    \vectordot{
      \nabla
      \left[
        \left(1 + \frac{\widetilde{\temp}}{\widehat{\temp}} \right)^{\frac{m}{2}} -1
      \right]
    }
    {
      \nabla
      \left[
        \left(1 + \frac{\widetilde{\temp}}{\widehat{\temp}} \right)^{\frac{m}{2}} -1
      \right]
    }
    \\
    +
    2 \mu \tensordot{\widetilde{\gradsym}}{\widetilde{\gradsym}}
    -
    \frac{
      2 \mu \tensordot{\widetilde{\gradsym}}{\widetilde{\gradsym}}
    }
    {
      \left(1 + \frac{\widetilde{\temp}}{\widehat{\temp}} \right)^{1-m}
      }
    +
    \rho
    \cheatvolref
    \frac{m-1}{m}
    \left(
      \left(1 + \frac{\widetilde{\temp}}{\widehat{\temp}} \right)^m-1
    \right)
    \vectordot{
        \widetilde{\vec{v}}
    }
    {
      \nabla
      \widehat{\temp}
    }
    .
  \end{multline}
  This is the sought pointwise evolution equation for the thermal part of the integrand in the functional~$\mathcal{V}_{\mathrm{meq}}^{\vartemp,\, m}$. Integrating~\eqref{eq:329} over the (material) domain $\Omega$ leads directly to the result reported in Lemma~\ref{lm:4}. 
  
\enlargethispage{1.5em}

\section{Auxiliary tools}
\label{sec:auxiliary-tools-1}

\begin{lemma}
  \label{lm:6}
  Let $m,n \in (0,1)$ and let $n>m$. Let $x \in (-1, +\infty)$ and let us define the function
\begin{equation}
  \label{eq:211}
  f(x,m,n)
  =_{\bydefinition}
    \frac{1}{n}
  \left(
    1 + x
  \right)^{n}
  -
  \frac{1}{m}
  \left(
    1 + x
  \right)^{m}
  +
  \frac{n-m}{mn}
  .
\end{equation}
The function $f(x,m,n)$ is for $x \in (-1, +\infty)$ a non-negative function which vanishes if and only if $x=0$.
\end{lemma}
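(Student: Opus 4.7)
The plan is to reduce the inequality to a single-variable calculus exercise. Setting $y = 1+x$, which maps the interval $x \in (-1, +\infty)$ bijectively onto $y \in (0, +\infty)$, the claim becomes equivalent to showing that the function
\[
  g(y) =_{\bydefinition} \frac{y^n}{n} - \frac{y^m}{m} + \frac{n-m}{mn}
\]
is non-negative on $(0, +\infty)$ and vanishes only at $y = 1$. I would establish this by locating the unique global minimiser of $g$ via elementary differentiation.

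Differentiating gives $g'(y) = y^{n-1} - y^{m-1} = y^{m-1}\left(y^{n-m} - 1\right)$. Since $y^{m-1} > 0$ for all $y > 0$, and $n - m > 0$ by assumption, the sign of $g'$ coincides with the sign of $y^{n-m} - 1$: strictly negative on $(0,1)$, zero at $y = 1$, and strictly positive on $(1, +\infty)$. Hence $g$ is strictly decreasing on $(0,1]$ and strictly increasing on $[1, +\infty)$, and $y = 1$ is its unique global minimiser on $(0, +\infty)$.

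A direct computation yields
\[
  g(1) = \frac{1}{n} - \frac{1}{m} + \frac{n-m}{mn} = \frac{m - n + (n-m)}{mn} = 0,
\]
so $g(y) \geq 0$ on $(0, +\infty)$ with equality if and only if $y = 1$. Translating back to the original variable, this is precisely the assertion that $f(x,m,n) \geq 0$ for $x \in (-1, +\infty)$ with equality if and only if $x = 0$.

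There is essentially no obstacle to this argument; the proof is a routine monotonicity check, and the restrictions $m, n \in (0,1)$ together with $n > m$ are needed only to ensure that $y^{m-1}$ is positive, that $n - m > 0$, and that all the prefactors $1/m$, $1/n$, and $(n-m)/(mn)$ appearing in $g$ are well-defined. For completeness one may note that the boundary behaviour of $g$, namely $g(y) \to (n-m)/(mn) > 0$ as $y \to 0^+$ and $g(y) \to +\infty$ as $y \to +\infty$ (the latter because $n > m > 0$), is already forced by the monotonicity analysis and requires no separate treatment.
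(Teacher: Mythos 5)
Your proposal is correct and follows essentially the same route as the paper: differentiate, factor the derivative as $(1+x)^{m-1}\left[(1+x)^{n-m}-1\right]$, read off the sign to locate the unique global minimum at $x=0$ (your $y=1$), and check that the value there is zero. The substitution $y=1+x$ is purely cosmetic, so there is nothing further to compare.
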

\begin{proof}
Taking the derivative of~\eqref{eq:211} with respect to $x$ yields
$
  \dd{}{x}
  f(x,m,n)
  =
  \left(
    1 + x
  \right)^{m-1}
  \left[
    \left(
      1 + x
    \right)^{n-m}
    -
    1
  \right]
$.
If $n>m$, then the derivative of $f(x,m,n)$ is positive for $x>0$ and negative for $x<0$. Further the value of $f(x,m,n)$ at $x=0$ is $f(0,m,n)=0$. Consequently, function $f(x,m,n)$ is for all $x \in (-1, +\infty)$ a non-negative function which vanishes if and only if $x=0$.  
\end{proof}

\begin{lemma}
  \label{lm:7}
  Let $m,n \in (0,1)$ and let $n>m> \frac{n}{2}$. Let $x \in (-1, +\infty)$ and let us define the function
\begin{equation}
  \label{eq:213}
  g(x,m,n)
  =_{\bydefinition}
  -
  \left(
    \left[
      \left(
        1
        +
        x
      \right)^{\frac{m}{2}}
      -
      1
    \right]^2
    +
    \left[
      \left(
        1
        +
        x
      \right)^{\frac{n}{2}}
      -
      1
    \right]^2
  \right)
  +
  n
  \left[
    \frac{1}{n}
    \left(
      1 + x
    \right)^{n}
    -
    \frac{1}{m}
    \left(
      1 + x
    \right)^{m}
    +
    \frac{n-m}{mn}
  \right]
  .
\end{equation}
The function $g(x, m, n)$ has the limits
$
  \lim_{x \to -1+} g(x, m ,n) = - 3 + \frac{n}{m} < 0,
$
and
$
  \lim_{x \to + \infty} g(x, m ,n) = - \infty,
$
and the function $g(x, m ,n)$ is non-positive for all $x \in (-1, +\infty)$, and it vanishes if and only if $x=0$.
\end{lemma}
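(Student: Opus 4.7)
The plan is to verify the two stated limits directly and then to establish that $x = 0$ is the unique global maximum of $g(\cdot, m, n)$ on $(-1, +\infty)$, at which $g$ vanishes.

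First, I would expand the squares and collect powers of $u := 1 + x$, rewriting
\begin{equation*}
  g(x, m, n) = -\frac{m+n}{m} u^m + 2 u^{n/2} + 2 u^{m/2} + \frac{n - 3m}{m}.
\end{equation*}
Substituting $u = 1$ shows $g(0, m, n) = 0$; sending $u \to 0^+$ leaves only the constant term $n/m - 3$, which is strictly less than $-1$ because $m > n/2$ forces $n/m < 2$; sending $u \to +\infty$, the leading term $-\frac{m+n}{m} u^m$ wins, since $m > n/2 > m/2$ makes $m$ the largest exponent, driving $g \to -\infty$.

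Next, I would study the sign of
\begin{equation*}
  g'(x) = -(m+n) u^{m-1} + n u^{n/2 - 1} + m u^{m/2 - 1}.
\end{equation*}
Dividing by the positive factor $u^{m/2 - 1}$ and introducing $v := u^{1/2}$, the sign of $g'$ coincides with the sign of
\begin{equation*}
  \widetilde h(v) := -(m+n) v^m + n v^{n - m} + m.
\end{equation*}
I would then verify the identities $\widetilde h(1) = 0$ and $\widetilde h'(1) = -(m^2 + 2 m n - n^2)$; the latter is strictly negative because, writing $r = m/n$, the hypothesis $m > n/2$ yields $r > 1/2 > \sqrt{2} - 1$, which exceeds the positive root of $r^2 + 2r - 1$.

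Finally, I would check that $\widetilde h$ is unimodal on $(0, +\infty)$. The equation $\widetilde h'(v) = 0$ reduces to $v^{2m - n} = \frac{n(n-m)}{m(m+n)}$, which has a unique positive solution $v_\star$ because $2m - n > 0$ under the hypothesis. Since $n - m - 1 < m - 1 < 0$, the term $n\,(n-m)\,v^{n-m-1}$ dominates $\widetilde h'$ as $v \to 0^+$ (positive) while $-m(m+n)\,v^{m-1}$ dominates as $v \to +\infty$ (negative), so $v_\star$ is a maximum. Combined with $\widetilde h(0^+) = m > 0$, $\widetilde h(1) = 0$ and $\widetilde h'(1) < 0$, this forces $v_\star < 1$, so that $\widetilde h > 0$ on $(0, 1)$ and $\widetilde h < 0$ on $(1, +\infty)$. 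Back-translating, $g$ is strictly increasing on $(-1, 0)$ and strictly decreasing on $(0, +\infty)$ with $g(0, m, n) = 0$, and the conclusion follows.

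The delicate step is controlling the sign of $\widetilde h'(1)$ and the location of $v_\star$. Both use the strict inequality $m > n/2$ in an essential way: it guarantees $\widetilde h'(1) < 0$, so the unique extremum of $\widetilde h$ lies strictly below $v = 1$ and no spurious sign change can occur between $v_\star$ and $1$, and it guarantees $2m - n > 0$, so the power $v^m$ genuinely dominates $v^{n - m}$ at infinity. Everything else is routine calculus.
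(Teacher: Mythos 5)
Your proof is correct. The overall strategy---verify the two limits, then show that $g$ is strictly increasing on $(-1,0)$ and strictly decreasing on $(0,+\infty)$, so that $x=0$ is the unique global maximum where $g$ vanishes---is exactly the paper's. Where you genuinely differ is in how the sign of the derivative is extracted. The paper regroups the three terms of $g'$ into the factored form
\begin{equation*}
  \frac{\mathrm{d}}{\mathrm{d}x}\,g(x,m,n)
  =
  \frac{m \left(1+x\right)^{\frac{m}{2}} \left( 1 -  \left(1+x\right)^{\frac{m}{2}} \right) + n \left(1+x\right)^{\frac{n}{2}} \left( 1 -  \left(1+x\right)^{m-\frac{n}{2}} \right)}{1+x},
\end{equation*}
from which, since $m>0$ and $m-\tfrac{n}{2}>0$, each summand is manifestly positive for $x<0$ and negative for $x>0$; no critical-point analysis is needed. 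You instead divide by $u^{m/2-1}$, substitute $v=u^{1/2}$, and carry out a full unimodality argument for $\widetilde h(v)=-(m+n)v^{m}+n v^{n-m}+m$ (unique critical point $v_\star$ from $v^{2m-n}=\tfrac{n(n-m)}{m(m+n)}$, boundary behaviour at $0^+$ and $+\infty$, $\widetilde h(1)=0$ and $\widetilde h'(1)=-(m^2+2mn-n^2)<0$). All of these steps check out, and your expansion $g=-\tfrac{m+n}{m}u^{m}+2u^{n/2}+2u^{m/2}+\tfrac{n-3m}{m}$ correctly yields both limits. Your route is longer but has the merit of making explicit every place the hypothesis $m>\tfrac{n}{2}$ is used ($2m-n>0$ for uniqueness of $v_\star$, the ordering of the exponents at $0^+$ and $+\infty$, and the sign of $\widetilde h'(1)$), whereas in the paper's factorization that hypothesis enters only once, through the positivity of the single exponent $m-\tfrac{n}{2}$.
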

\begin{proof}
Apparently, the function vanishes for $x=0$, and the computation of the limits is straightforward. If we take the derivative of this function with respect to $x$ we get
\begin{equation}
  \label{eq:216}
  \dd{}{x}
  g(x, m ,n)
  =
  \frac{m \left(1+x\right)^{\frac{m}{2}} \left( 1 -  \left(1+x\right)^{\frac{m}{2}} \right) + n \left(1+x\right)^{\frac{n}{2}} \left( 1 -  \left(1+x\right)^{m-\frac{n}{2}} \right)}{1+x},
\end{equation}
and we see that the derivative is negative for $x > 0$, and it is positive for $x < 0$. Consequently, we see that $g(x, m ,n) \leq 0$ for all $x \in (-1, +\infty)$, and that it vanishes if and only if $x=0$.
\end{proof}

\begin{lemma}
  \label{lm:9}
  Let  $m, n \in (0,1)$ and $n> m > \frac{n}{2}$, and let
  $
  f(x) =_{\bydefinition}
    \frac{1}{n}
    \exponential{n x}
    -
    \frac{1}{m}
    \exponential{m x}
    +
    \frac{n-m}{mn}
    $.
  Let $x_{\mathrm{crit}}$ is an arbitrary negative number, and let $l$ is a given natural number, $l \geq 3$, then there exists a constant~$L$ (possibly large) such that
  $
  \frac{\absnorm{x}^l}{L}
  \leq
  f(x)
  $
  holds for all $x \in [x_{\mathrm{crit}}, + \infty)$.
\end{lemma}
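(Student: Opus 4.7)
The plan is to compare $f$ with $|x|^l$ by studying the ratio $\rho(x) := f(x)/|x|^l$ and showing it is bounded below by a strictly positive constant on $[x_{\mathrm{crit}},+\infty)\setminus\{0\}$; then $L$ is just the reciprocal of that infimum. First I would record that $f(0)=0$, $f'(x)=e^{nx}-e^{mx}$ so $f'(0)=0$, and $f''(0)=n-m>0$, so Taylor expansion at the origin gives $f(x) = \tfrac{n-m}{2}x^2 + O(x^3)$. In particular, for every $l\geq 3$ we have $\rho(x) = f(x)/|x|^l \sim \tfrac{n-m}{2}|x|^{2-l} \to +\infty$ as $x\to 0$ from either side.

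Second, I would appeal to Lemma~\ref{lm:6}: substituting $y = e^x - 1$ (which sweeps $(-1,+\infty)$ as $x$ sweeps $\R$) shows that $f(x) > 0$ for all $x\neq 0$, so $\rho$ is continuous and strictly positive on $[x_{\mathrm{crit}},0)\cup(0,+\infty)$. Next, as $x\to +\infty$, the leading term $\tfrac{1}{n}e^{nx}$ dominates any polynomial, so $\rho(x)\to +\infty$. The left endpoint $x_{\mathrm{crit}}$ is finite and $f(x_{\mathrm{crit}})>0$, so $\rho(x_{\mathrm{crit}})$ is a finite positive number.

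Third, I would assemble a standard compactness argument. On $[x_{\mathrm{crit}},0)$, fix a small $\delta>0$; on $[x_{\mathrm{crit}},-\delta]$ the continuous positive function $\rho$ attains a positive minimum, and on $(-\delta,0)$ the limit $\rho(x)\to +\infty$ at $0^-$ gives a positive lower bound provided $\delta$ is chosen small enough. The same reasoning on $(0,+\infty)$, split into $(0,\delta]$ and $[\delta,M]\cup[M,+\infty)$ (with $M$ large enough that $\rho\geq 1$ there by the exponential blow-up), gives a positive lower bound there as well. Taking the minimum of the two bounds yields a constant $c>0$ with $\rho(x)\geq c$ for all $x\in[x_{\mathrm{crit}},+\infty)\setminus\{0\}$; setting $L := 1/c$ delivers $|x|^l/L \leq f(x)$ off zero, while at $x=0$ both sides vanish so the inequality is trivial.

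I do not anticipate a serious obstacle: the only delicate point is confirming that the blow-up of $\rho$ at $0$ happens from \emph{both} sides, which is secured by $l\geq 3$ (so that $|x|^l$ is dominated by the even quadratic leading term of $f$ near $0$, irrespective of sign). Note that the dependence $L=L(l,m,n,x_{\mathrm{crit}})$ is implicit in this argument, which matches the remark in the main text that the convergence in Corollary~\ref{crl:1} need not be uniform in $l$.
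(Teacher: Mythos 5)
Your proposal is correct, but it proceeds differently from the paper. You bound the ratio $f(x)/\absnorm{x}^l$ away from zero by a soft compactness argument: positivity of $f$ off the origin (which indeed follows from Lemma~\ref{lm:6} via $y=\exponential{x}-1$, or directly from $f'(x)=\exponential{mx}(\exponential{(n-m)x}-1)$ changing sign at $0$), blow-up of the ratio at the origin because the quadratic leading term $\tfrac{n-m}{2}x^2$ dominates $\absnorm{x}^l$ for $l\geq 3$, blow-up at $+\infty$ by exponential dominance, and a positive minimum on the remaining compact set. The paper instead builds the comparison function explicitly: expanding $f$ in the power series $\sum_{k\geq 2}\frac{n^{k-1}-m^{k-1}}{k!}x^k$ with all coefficients positive, it reads off $h(x)=\frac{n^{l-1}-m^{l-1}}{l!}\absnorm{x}^l\leq f(x)$ for $x\geq 0$ term by term, then handles the negative half-line by locating the first intersection point $x_{\mathrm{int}}<0$ of $h$ and $f$ (which exists because $h$ is unbounded and $f$ is bounded as $x\to-\infty$) and rescaling $h$ by the factor $h(x_{\mathrm{int}})/h(x_{\mathrm{crit}})$ so that the rescaled function stays below $f$ down to $x_{\mathrm{crit}}$. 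The paper's route yields a semi-explicit formula for $L$ (namely $\tfrac{1}{L}=\tfrac{n^{l-1}-m^{l-1}}{l!}$ or $\tfrac{1}{L}=\tfrac{f(x_{\mathrm{int}})}{\absnorm{x_{\mathrm{crit}}}^l}$), which is what feeds the numerical values quoted in Figure~\ref{fig:relative-entropy-sketch}; your route is shorter and more standard but non-constructive. Both arguments use $l\geq 3$ in the same essential place (the origin) and neither actually needs $m>\frac{n}{2}$, so your proof is a legitimate alternative.
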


\begin{proof}
Using the series expansion for the exponential we can observe that
$
\frac{1}{n}
  \exponential{n x}
  -
  \frac{1}{m}
  \exponential{m x}
  +
  \frac{n-m}{mn}
  =
  \sum_{k=2}^{+\infty}
  \frac{n^{k-1}-m^{k-1}}{k!}
  x^k
$,
and since $n>m$, we see that the inequality
\begin{equation}
  \label{eq:336}
  \frac{n^{l-1}-m^{l-1}}{l!}
  x^l
  \leq
  \frac{1}{n}
  \exponential{n x}
  -
  \frac{1}{m}
  \exponential{m x}
  +
  \frac{n-m}{mn}
\end{equation}
holds for any non-negative $x$ and arbitrary $l\in \N$, $l \geq 2$, and the equality occurs if and only if $x=0$. Hence if we define function $h$ as
\begin{equation}
  \label{eq:342}
  h(x)
  =_{\bydefinition}
  \frac{n^{l-1}-m^{l-1}}{l!}
  \absnorm{x}^l,
\end{equation}
we see that
$
  h(x) < f(x) 
$
for all positive $x$ and $h(x) = f(x)$ for $x=0$.

Moreover, a straightforward calculation also shows that the function $f(x)-h(x)$ has for $l \geq 3$ strict local minimum at zero, hence $h(x)<f(x)$ even in some left neighborhood of zero, that is for some $x \in (-\varepsilon, 0)$. On the other hand function~$h(x)$ is for $x<0$ unbounded, while $f(x)$ is bounded, hence they must intersect,
$
  h(x_{\mathrm{int}}) = f (x_{\mathrm{int}})
$,
at some point $x_{\mathrm{int}}$, where $x_{\mathrm{int}} < 0$. (Among possible intersecting points we chose the one with the smallest magnitude.) If $x_{\mathrm{int}} \leq x_{\mathrm{crit}} $, then we are done, and we have found the desired function. In this case $L$ is given by the formula $\frac{1}{L}= \frac{n^{l-1}-m^{l-1}}{l!}$. 

If $x_{\mathrm{crit}} < x_{\mathrm{int}}$ it remains to flatten the graph of $h(x)$ such that the intersection point is moved sufficiently far to the left. This can be done by the means of the transformation
$
g(x) =_{\bydefinition} \frac{1}{K} h \left( x \right),
$
where $K$ is a sufficiently large positive constant. If we choose $\frac{1}{K} = _{\bydefinition} \frac{h(x_{\mathrm{int}})}{h(x_{\mathrm{crit}})}$, then $\frac{1}{K} < 1$. (Function $h$ is for $x<0$ a strictly decreasing function.) We can observe that $g(x) < f(x)$ for $x \in [x_{\mathrm{int}}, + \infty)$. On the other hand, if $x \in [x_{\mathrm{crit}}, x_{\mathrm{int}})$ then
\begin{equation}
  \label{eq:350}
  g(x)
  =
  \frac{h(x_{\mathrm{int}})}{h(x_{\mathrm{crit}})} h(x)
  <
  h(x_{\mathrm{int}})
  \leq
  f(x_{\mathrm{int}})
  <
  f(x)
  ,
\end{equation}
where we have exploited the fact that $f$ is for $x<0$ a strictly decreasing function. Consequently $g(x) \leq f(x)$ holds for all $x \in [x_{\mathrm{crit}}, + \infty)$ as desired. Moreover, the constant $L$ is given by the formula~$\frac{1}{L} = \frac{f(x_{\mathrm{int}})}{\absnorm{x_{\mathrm{crit}}}^l}$.
\end{proof}

\enlargethispage{1.8em}

\section{Auxiliary tools -- inequalities in function spaces}
\label{sec:auxiliary-tools}
For further reference we recall the standard embedding theorems for Lebesgue and Sobolev spaces, as well as other standard inequalities, for proofs see for example \cite{evans.lc:partial} or \cite{adams.ra.fournier.jjf:sobolev}.

\begin{lemma}[Trivial embedding of Lebesgue spaces]
  \label{lm:trivial-embedding}
  Let $\Omega$ be a domain with a finite volume, and let $p_1, p_2 \in [1, + \infty]$ such that $p_2 \geq p_1$, then
$
    \norm[\sleb{p_1}{\Omega}]{f} \leq \absnorm{\Omega}^{\frac{p_2 - p_1}{p_1 p_2}} \norm[\sleb{p_2}{\Omega}]{f}
    $.
  \end{lemma}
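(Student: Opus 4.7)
The plan is to reduce the inequality to a single application of H\"older's inequality applied to the constant function $1$, which on a finite-measure domain is integrable to any positive power. The case $p_1=p_2$ is immediate (the right-hand factor becomes $\absnorm{\Omega}^0 = 1$), so I would treat the case $1 \leq p_1 < p_2 < +\infty$ first, and then handle $p_2 = +\infty$ as a short limiting argument.

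For the main case, I would start from the identity $\absnorm{f}^{p_1} = \absnorm{f}^{p_1} \cdot 1$ and apply H\"older's inequality in $\sleb{1}{\Omega}$ with the conjugate pair
\begin{equation*}
q_1 = \frac{p_2}{p_1}, \qquad q_2 = \frac{p_2}{p_2 - p_1}, \qquad \frac{1}{q_1} + \frac{1}{q_2} = 1,
\end{equation*}
which is admissible since $p_2 > p_1 \geq 1$ implies $q_1, q_2 \in (1,+\infty)$. This gives
\begin{equation*}
\int_{\Omega} \absnorm{f}^{p_1} \, \cvolumee \leq \left( \int_{\Omega} \absnorm{f}^{p_2} \, \cvolumee \right)^{\frac{p_1}{p_2}} \left( \int_{\Omega} 1 \, \cvolumee \right)^{\frac{p_2 - p_1}{p_2}} = \norm[\sleb{p_2}{\Omega}]{f}^{p_1} \, \absnorm{\Omega}^{\frac{p_2 - p_1}{p_2}}.
\end{equation*}
Taking the $p_1$-th root of both sides yields
\begin{equation*}
\norm[\sleb{p_1}{\Omega}]{f} \leq \absnorm{\Omega}^{\frac{p_2 - p_1}{p_1 p_2}} \norm[\sleb{p_2}{\Omega}]{f},
\end{equation*}
which is the stated estimate.

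For $p_2 = +\infty$, I would use the pointwise bound $\absnorm{f(x)} \leq \norm[\sleb{\infty}{\Omega}]{f}$ almost everywhere, raise to the $p_1$-th power, integrate over $\Omega$, and take the $p_1$-th root to obtain $\norm[\sleb{p_1}{\Omega}]{f} \leq \absnorm{\Omega}^{1/p_1} \norm[\sleb{\infty}{\Omega}]{f}$, which matches the stated exponent since $(p_2 - p_1)/(p_1 p_2) \to 1/p_1$ as $p_2 \to +\infty$. No real obstacle arises here; the only point worth checking is the boundary case when the right-hand $\sleb{p_2}{\Omega}$ norm is infinite, in which case the inequality holds trivially, and when it is finite, the finite measure $\absnorm{\Omega} < +\infty$ guarantees all integrals above are well defined.
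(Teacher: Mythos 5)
Your proof is correct; the paper itself does not prove this lemma but simply defers to the standard references (Evans, Adams--Fournier), and your argument---H\"older's inequality applied to $\absnorm{f}^{p_1}\cdot 1$ with conjugate exponents $p_2/p_1$ and $p_2/(p_2-p_1)$, plus the pointwise bound for the $p_2=+\infty$ endpoint---is exactly the standard proof those references give. The exponent bookkeeping and the degenerate cases ($p_1=p_2$, infinite right-hand norm) are all handled properly.
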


\begin{lemma}[Continuous embedding of Sobolev spaces into Lebesgue spaces]
  \label{lm:sobolev-embedding-continuous}
  Let $\Omega \subset \R^n$ be a domain with $\mathcal{C}^{0,1}$ boundary and let $p \in [1, n)$. Then for all $q \in [1, p^\star]$ where
$
    p^{\star} = _{\bydefinition} \frac{np}{n-p}
$
  there exists a constant $C_{\mathrm{S}}$ that depends on $\Omega$, $p$, $n$ and $q$ such that
$
\norm[\sleb{q}{\Omega}]{f} \leq C_{\mathrm{S}} \norm[\ssob{1}{p}{\Omega}]{f}
$. The constant $C_{\mathrm{S}}$ is referred to as the Sobolev embedding constant.
\end{lemma}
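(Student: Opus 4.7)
The plan is to prove the statement in three stages, following the classical approach: first establish the Gagliardo--Nirenberg--Sobolev inequality on the whole of $\R^n$ for smooth compactly supported functions, then transfer the inequality from $\R^n$ to the bounded domain $\Omega$ using an extension operator compatible with the $\mathcal{C}^{0,1}$ boundary, and finally interpolate with the trivial embedding of Lebesgue spaces (Lemma~\ref{lm:trivial-embedding}) to cover the whole range $q \in [1, p^\star]$.

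For the first stage I would begin with the case $p=1$. For any $f \in \mathcal{C}_c^\infty(\R^n)$ the fundamental theorem of calculus gives $\absnorm{f(\vec{x})} \leq \int_{-\infty}^{x_i} \absnorm{\partial_{x_i} f} \, \diff x_i$ in each coordinate direction $i = 1, \dots, n$. Taking the geometric mean of these $n$ estimates and integrating over $\R^n$, while applying the generalised H\"older inequality iteratively to peel off one variable at a time, yields $\norm[\sleb{n/(n-1)}{\R^n}]{f} \leq C \norm[\sleb{1}{\R^n}]{\nabla f}$. The general case $1 < p < n$ follows by applying the $p=1$ estimate to $\absnorm{f}^{\gamma}$ with $\gamma = \frac{(n-1)p}{n-p}$; the chain rule together with H\"older's inequality produces $\norm[\sleb{p^\star}{\R^n}]{f} \leq C \norm[\sleb{p}{\R^n}]{\nabla f}$, and the exponent $\gamma$ is precisely the one that makes the two sides scale compatibly. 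A standard density argument then extends the inequality from $\mathcal{C}_c^\infty(\R^n)$ to $\ssob{1}{p}{\R^n}$.

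For the second stage, the $\mathcal{C}^{0,1}$ regularity of $\partial \Omega$ guarantees the existence of a bounded linear extension operator $E : \ssob{1}{p}{\Omega} \to \ssob{1}{p}{\R^n}$ satisfying $(E f)|_{\Omega} = f$; this is built by flattening $\partial \Omega$ via Lipschitz charts, reflecting across the flattened boundary, and gluing the local pieces with a smooth partition of unity. Composing with the $\R^n$ estimate from the first stage gives $\norm[\sleb{p^\star}{\Omega}]{f} \leq \norm[\sleb{p^\star}{\R^n}]{E f} \leq C \norm[\ssob{1}{p}{\R^n}]{E f} \leq C' \norm[\ssob{1}{p}{\Omega}]{f}$. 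For the third stage, since $\absnorm{\Omega} < + \infty$, the trivial Lebesgue embedding of Lemma~\ref{lm:trivial-embedding} yields $\norm[\sleb{q}{\Omega}]{f} \leq \absnorm{\Omega}^{(p^\star - q)/(q p^\star)} \norm[\sleb{p^\star}{\Omega}]{f}$ for all $q \in [1, p^\star]$, and the constant $C_{\mathrm{S}}$ advertised in the statement absorbs this factor together with the constants produced by the previous stages.

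The principal obstacle is the combinatorial bookkeeping in the iterated H\"older argument that yields the sharp exponent $n/(n-1)$ in the Gagliardo--Nirenberg--Sobolev inequality for $p=1$: one must integrate out one variable at a time while carefully tracking which factors remain in $\sleb{1}{\cdot}$ and which have already been raised to the power $1/(n-1)$. A secondary but nontrivial point is the construction of the extension operator at the threshold regularity $\mathcal{C}^{0,1}$, where naive reflection is justified only because a Lipschitz change of coordinates preserves $\ssob{1}{p}{\cdot}$ exactly; no higher regularity of $\partial \Omega$ may be assumed, and consequently all the local estimates must be uniform in the bi-Lipschitz constants of the charts.
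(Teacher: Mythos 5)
Your proposal is correct and is precisely the classical Gagliardo--Nirenberg--Sobolev argument (with the Lipschitz extension operator and interpolation via the trivial Lebesgue embedding) that the paper itself defers to by citing the standard references of Evans and Adams--Fournier rather than giving a proof. The choice of exponent $\gamma = \frac{(n-1)p}{n-p}$ and the reduction of the range $q \in [1, p^\star]$ to the endpoint case via Lemma~\ref{lm:trivial-embedding} are both exactly as in those references, so there is nothing to add.
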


\begin{lemma}[Poincar\'e inequality]
  \label{lm:poincare}
  Let $f \in \ssobzero{1}{p}{\Omega}$, where $p \in [1, +\infty)$ and $\Omega \in {\mathcal C}^{0,1}$. Then there exists a constant $C$ that depends only on $\Omega$ and $p$ such that
$
    \norm[\sleb{p}{\Omega}]{f}
    \leq
    C(\Omega, p)
    \norm[\sleb{p}{\Omega}]{\nabla f}
    $.
  In particular, if $p=2$ then
$
    \norm[\sleb{2}{\Omega}]{f}^2
    \leq
    C_{\mathrm{P}}
    \norm[\sleb{2}{\Omega}]{\nabla f}^2
    $,
  where $C_{\mathrm{P}}$ is referred to as the Poincar\'e constant.
\end{lemma}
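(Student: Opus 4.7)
The plan is to establish the inequality first for smooth compactly supported test functions by a direct one-dimensional argument, and then to obtain the general case by density. The key geometric input is boundedness of $\Omega$: since $\Omega$ has $\mathcal{C}^{0,1}$ boundary and, in the paper's setting, is a vessel (and hence bounded), it fits inside a strip $\{x \in \R^n : a_1 < x_1 < a_2\}$ for some real numbers $a_1 < a_2$.

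First I would fix $\phi \in \mathcal{C}^{\infty}_{c}(\Omega)$, extend $\phi$ by zero to all of $\R^n$, and write $\phi(x_1, x') = \int_{a_1}^{x_1} \partial_1 \phi(t, x')\, \diff t$ via the fundamental theorem of calculus, where $x' = (x_2, \dots, x_n)$. Applying H\"older's inequality with conjugate exponents $p$ and $p/(p-1)$ to this representation produces the pointwise bound $\absnorm{\phi(x)}^p \leq (a_2 - a_1)^{p-1} \int_{a_1}^{a_2} \absnorm{\partial_1 \phi(t, x')}^p\, \diff t$. Integrating over $\Omega$, using Fubini's theorem, and invoking the elementary estimate $\absnorm{\partial_1 \phi} \leq \absnorm{\nabla \phi}$ yields $\norm[\sleb{p}{\Omega}]{\phi}^p \leq (a_2 - a_1)^p \norm[\sleb{p}{\Omega}]{\nabla \phi}^p$, so the inequality holds on $\mathcal{C}^{\infty}_{c}(\Omega)$ with constant $C(\Omega, p) = a_2 - a_1$. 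The $p = 2$ case then follows by squaring, with $C_{\mathrm{P}} = (a_2 - a_1)^2$.

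The passage from $\mathcal{C}^{\infty}_{c}(\Omega)$ to $\ssobzero{1}{p}{\Omega}$ is routine: by definition $\ssobzero{1}{p}{\Omega}$ is the closure of $\mathcal{C}^{\infty}_{c}(\Omega)$ in the $W^{1,p}$ norm, and both $f \mapsto \norm[\sleb{p}{\Omega}]{f}$ and $f \mapsto \norm[\sleb{p}{\Omega}]{\nabla f}$ are continuous for this norm, so the inequality passes to the limit. The only genuine issue to flag is that boundedness of $\Omega$ in at least one coordinate direction is indispensable; the inequality fails for $\Omega = \R^n$ or for infinite slabs in the unbounded directions. This is automatic in the paper since the fluid occupies a bounded vessel. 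An alternative compactness argument via Rellich--Kondrachov (suppose the inequality fails, take a normalised minimising sequence $f_k$ with $\norm[\sleb{p}{\Omega}]{f_k} = 1$ and $\norm[\sleb{p}{\Omega}]{\nabla f_k} \to 0$, extract a strongly convergent subsequence in $\sleb{p}{\Omega}$, and observe that the limit has distributional gradient zero and zero trace, hence must vanish, contradicting unit norm) is available, but requires substantially more machinery than the elementary one-dimensional computation above and is not needed here.
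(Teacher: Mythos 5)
Your proof is correct. Note, however, that the paper does not actually prove this lemma: it appears in Appendix~C among the ``auxiliary tools,'' where the authors explicitly defer all proofs to the standard references (Evans; Adams and Fournier). So there is nothing in the paper to compare against step by step; what you have supplied is the classical Friedrichs-type argument that those references use for the $\ssobzero{1}{p}{\Omega}$ version of the inequality, and it is complete: the one-dimensional representation $\phi(x_1,x') = \int_{a_1}^{x_1}\partial_1\phi(t,x')\,\diff t$, H\"older with exponents $p$ and $p/(p-1)$ (trivially valid at $p=1$ as well), Fubini, and the density of $\mathcal{C}^\infty_c(\Omega)$ in $\ssobzero{1}{p}{\Omega}$ together with continuity of both sides in the $W^{1,p}$ norm. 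Your remark that only boundedness of $\Omega$ in one coordinate direction is used --- so that the $\mathcal{C}^{0,1}$ regularity hypothesis in the statement is superfluous for the zero-trace version --- is also accurate and worth flagging, since the Lipschitz assumption is only genuinely needed for the mean-value (Poincar\'e--Wirtinger) variant, which is not what is being claimed here.
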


\begin{lemma}[H\"older inequality]
  \label{lm:holder}
  Let $\left\{ p_i\right\}_{i=1}^k$ be a sequence of exponents such that $p_i \in [1, +\infty]$ and
  $\sum_{k=1}^{+\infty} \frac{1}{p_k} = 1$. Let us consider functions $\left\{ f_i\right\}_{i=1}^k$, such that $f_i \in \sleb{p_i}{\Omega}$, then
$
    \norm[\sleb{1}{\Omega}]{\prod_{i=1}^k f_i}
    \leq
    \prod_{i=1}^k
    \norm[\sleb{p_i}{\Omega}]{f_i}
$.
\end{lemma}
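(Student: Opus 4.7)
The plan is to reduce the generalised $k$-factor inequality to the classical two-factor H\"older inequality by induction on $k$, and to deduce the two-factor case from Young's inequality applied pointwise and integrated over $\Omega$.

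First I would establish Young's inequality: for conjugate exponents $p,q\in(1,+\infty)$ with $\frac{1}{p}+\frac{1}{q}=1$ and $a,b\geq 0$, one has $ab\leq \frac{a^p}{p}+\frac{b^q}{q}$. This follows from the concavity of the logarithm (equivalently, from the weighted arithmetic--geometric mean inequality): writing $\ln(ab)=\frac{1}{p}\ln a^p+\frac{1}{q}\ln b^q$ and applying Jensen's inequality to $\ln$ on this convex combination gives $\ln(ab)\leq \ln\!\left(\frac{a^p}{p}+\frac{b^q}{q}\right)$, whence the claim follows by monotonicity of $\exp$.

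Second I would handle the two-factor case $\norm[\sleb{1}{\Omega}]{f_1 f_2}\leq \norm[\sleb{p_1}{\Omega}]{f_1}\norm[\sleb{p_2}{\Omega}]{f_2}$ under $\frac{1}{p_1}+\frac{1}{p_2}=1$. If either factor has zero or infinite norm the inequality is trivial, so assume both norms are strictly positive and finite. Setting $F_i(x)=|f_i(x)|/\norm[\sleb{p_i}{\Omega}]{f_i}$ and applying Young's inequality pointwise to $F_1(x),F_2(x)$ with exponents $p_1,p_2$ gives $F_1 F_2\leq \frac{F_1^{p_1}}{p_1}+\frac{F_2^{p_2}}{p_2}$; integrating over $\Omega$ yields $\int_\Omega F_1 F_2\,\cvolumee\leq \frac{1}{p_1}+\frac{1}{p_2}=1$, and undoing the normalisation concludes this step. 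The endpoint situations $(p_1,p_2)\in\{(1,+\infty),(+\infty,1)\}$ follow directly from the definition of the essential supremum.

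Third I would induct on $k\geq 2$. Given $p_1,\dots,p_k\in[1,+\infty]$ with $\sum_{i=1}^{k}\frac{1}{p_i}=1$, define $r\in[1,+\infty]$ through $\frac{1}{r}=1-\frac{1}{p_k}=\sum_{i=1}^{k-1}\frac{1}{p_i}$. Applying the two-factor inequality to $\prod_{i=1}^{k-1}|f_i|$ and $|f_k|$ with conjugate exponents $r$ and $p_k$ yields $\norm[\sleb{1}{\Omega}]{\prod_{i=1}^{k}f_i}\leq \norm[\sleb{r}{\Omega}]{\prod_{i=1}^{k-1}f_i}\,\norm[\sleb{p_k}{\Omega}]{f_k}$. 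Setting $q_i=p_i/r$ for $i=1,\dots,k-1$, one checks $\sum_{i=1}^{k-1}\frac{1}{q_i}=r\sum_{i=1}^{k-1}\frac{1}{p_i}=1$, so the inductive hypothesis applied to $|f_i|^r\in\sleb{q_i}{\Omega}$ gives $\int_\Omega\prod_{i=1}^{k-1}|f_i|^r\,\cvolumee\leq \prod_{i=1}^{k-1}\norm[\sleb{p_i}{\Omega}]{f_i}^r$. Taking the $r$-th root and combining with the two-factor bound closes the induction. The only real obstacle is clerical: tracking the auxiliary exponents $r$ and $q_i$ and verifying their admissibility in the limiting cases $p_i\in\{1,+\infty\}$ so that both the two-factor step and the inductive application remain valid. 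Beyond this bookkeeping the argument is entirely elementary and reproduces the textbook derivation indicated by the references cited just above the statement.
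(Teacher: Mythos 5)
The paper offers no proof of this lemma: it is recalled in the appendix as a standard fact with the proof delegated to the cited references (Evans; Adams--Fournier). Your argument --- Young's inequality from concavity of the logarithm, the normalised two-factor case, and induction on $k$ via the auxiliary exponent $r$ with $\tfrac{1}{r}=\sum_{i=1}^{k-1}\tfrac{1}{p_i}$ --- is precisely the standard textbook derivation those references give, and it is correct, including your handling of the endpoint exponents.
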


\begin{lemma}[$\varepsilon$-Young inequality]
  Let $a, b \in \R^+$, $\varepsilon \in \R^+$ and $p,q \in (1, + \infty)$ such that $\frac{1}{p} + \frac{1}{q} = 1$. Then
$
  ab
    \leq
    \varepsilon a^p
    +
    \frac{1}{\left( \varepsilon p \right)^{\frac{q}{p}}q}
    b^q
$.
\end{lemma}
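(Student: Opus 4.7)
The plan is to reduce the $\varepsilon$-Young inequality to the classical Young inequality by a homogeneity-breaking rescaling. First I would recall (or prove, via the concavity of the logarithm and Jensen's inequality applied to $\exp$) the standard statement: for conjugate exponents $p, q \in (1, +\infty)$ with $\frac{1}{p} + \frac{1}{q} = 1$, and any $x, y \geq 0$, one has
\begin{equation*}
  xy \leq \frac{x^p}{p} + \frac{y^q}{q}.
\end{equation*}
I would take this as a black box, since it is textbook material and independent of the specifics of our problem.

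Next I would observe that the claimed inequality differs from the classical one only by the presence of a free parameter $\varepsilon$ that allows one to trade off the weights of the $a^p$ and $b^q$ terms. The natural way to introduce such a parameter is the substitution $x = \lambda a$, $y = b/\lambda$ for some $\lambda > 0$, which preserves the product $xy = ab$ but rescales the right-hand side to $\frac{\lambda^p a^p}{p} + \frac{b^q}{\lambda^q q}$. Choosing $\lambda$ so that the coefficient in front of $a^p$ equals $\varepsilon$, that is $\lambda^p/p = \varepsilon$, fixes $\lambda = (\varepsilon p)^{1/p}$ and hence $\lambda^q = (\varepsilon p)^{q/p}$. Substituting back yields exactly
\begin{equation*}
  ab \leq \varepsilon a^p + \frac{1}{(\varepsilon p)^{q/p} q} b^q,
\end{equation*}
which is the claim.

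The only step that requires any thought is the identification of the correct rescaling exponent; after that the argument is purely algebraic. I do not anticipate any genuine obstacle. The rescaling is well defined because $\varepsilon > 0$ and $p > 1$ ensure that $\lambda$ is a positive real number, and the degenerate cases $a = 0$ or $b = 0$ make the inequality trivial, so the substitution loses no generality. If one wished to avoid invoking the classical Young inequality altogether, the same statement could alternatively be proved by writing $ab = \exp(\ln a + \ln b)$, splitting the exponent according to the weights $1/p$ and $1/q$, and applying the convexity of $\exp$ to the convex combination of $\ln(\varepsilon p\, a^p)$ and $\ln(b^q/(\varepsilon p)^{q/p})$; this would give the inequality in one line without needing a separate preliminary lemma.
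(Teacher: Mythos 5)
Your proof is correct. The paper does not actually prove this lemma---it is listed among the auxiliary tools with a pointer to standard references (Evans; Adams--Fournier)---and your rescaling argument ($x=\lambda a$, $y=b/\lambda$ with $\lambda=(\varepsilon p)^{1/p}$ applied to the classical Young inequality) is exactly the standard derivation those references would give; the algebra, including the identification $\lambda^q=(\varepsilon p)^{q/p}$ and the treatment of the degenerate cases, checks out.
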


\begin{lemma}[Korn equality]
    Let $\volume$ be a bounded domain with smooth boundary. Let $\vec{v}$ be a smooth vector field that vanishes on the boundary $\left. \vec{v} \right|_{\partial \Omega}=0$, then 
$
    2
  \int_{\Omega}
  {
    \tensordot{\gradsym}{\gradsym}
  }
  \,
  \cvolumee
  =
  \int_{\Omega}
  {
    \tensordot{\gradv}{\gradv}
  }
  \,
  \cvolumee
  +
  \int_{\Omega}
  {
    \left(
      \divergence \vec{v}
    \right)^2
  }
  \,
  \cvolumee
$.
\end{lemma}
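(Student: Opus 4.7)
The plan is to prove the identity in two steps: first a pointwise algebraic identity relating $\tensordot{\gradsym}{\gradsym}$ to $\tensordot{\gradv}{\gradv}$, and second a double integration by parts that converts the remaining ``mixed'' cross-term into $(\divergence \vecv)^2$. The only subtle point is the vanishing of boundary terms, which is guaranteed by the hypothesis $\left.\vecv\right|_{\partial\Omega} = \vec{0}$.

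First I would expand the definition $\gradsym = \frac{1}{2}(\gradv + \transpose{(\gradv)})$ and compute the double contraction term by term, obtaining the pointwise identity
\[
2\,\tensordot{\gradsym}{\gradsym} \;=\; \tensordot{\gradv}{\gradv} \;+\; \tensordot{\gradv}{\transpose{(\gradv)}}.
\]
In index notation this is simply $2 D_{ij}D_{ij} = (\partial_i v_j)(\partial_i v_j) + (\partial_i v_j)(\partial_j v_i)$; the cross-term arises twice with coefficient $\tfrac{1}{4}$ from squaring the symmetriser, and the two ``diagonal'' pieces coincide. Integrating over $\Omega$ reduces the whole problem to showing
\[
\int_{\Omega} \tensordot{\gradv}{\transpose{(\gradv)}} \, \cvolumee \;=\; \int_{\Omega} \left(\divergence \vecv\right)^2 \, \cvolumee.
\]

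Second, working in index notation the left-hand side above reads $\int_{\Omega}(\partial_j v_i)(\partial_i v_j)\,\cvolumee$. I integrate by parts moving $\partial_j$ off the first factor; the boundary integral $\int_{\partial\Omega} v_i(\partial_i v_j) n_j\,\surfacees$ vanishes because $\vecv = \vec{0}$ there, leaving $-\int_{\Omega} v_i\, \partial_i(\divergence \vecv)\,\cvolumee$ after using $\partial_j\partial_i v_j = \partial_i(\divergence \vecv)$. A second integration by parts in $x_i$, whose boundary term is again killed by the same Dirichlet condition, produces $\int_{\Omega} (\partial_i v_i)(\divergence \vecv)\,\cvolumee = \int_{\Omega}(\divergence \vecv)^2\,\cvolumee$. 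Combining the two ingredients yields the asserted equality.

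There is no genuine obstacle—the identity is classical and the smoothness of $\vecv$ and $\partial\Omega$ trivialises all analytic issues. The only thing one must watch is not to drop either boundary term in the two integrations by parts; both are annihilated by the same Dirichlet condition, which explains why the \emph{equality} holds in this setting and only a Korn-type \emph{inequality} survives when the boundary hypothesis is weakened.
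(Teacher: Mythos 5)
Your proof is correct and complete: the pointwise expansion $2\,\tensordot{\gradsym}{\gradsym} = \tensordot{\gradv}{\gradv} + \tensordot{\gradv}{\transpose{(\gradv)}}$ and the double integration by parts converting the cross-term into $\int_{\Omega}(\divergence \vec{v})^2\,\cvolumee$, with both boundary terms killed by the Dirichlet condition, is exactly the classical argument. The paper itself does not prove this lemma --- it is stated in the appendix of auxiliary tools with a pointer to standard references --- so there is no in-paper proof to compare against; your write-up supplies precisely the standard derivation those references contain.
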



%
%

\bibliographystyle{spmpsci-ay}      
\bibliography{vit-prusa}   

\end{document}